\documentclass[12pt]{amsart}
\usepackage[top=4.2cm, bottom=4cm, left=2.4cm, right=2.4cm]{geometry}
\usepackage[utf8]{inputenc}
\usepackage[USenglish]{babel}
\usepackage[T1]{fontenc} 
\usepackage{mathrsfs}
\usepackage{mathtools}
\usepackage{amsmath}
\usepackage{amssymb,epsfig}
\usepackage[width=0.9\textwidth]{caption}
\usepackage{subcaption}
\usepackage{comment}
\usepackage{soul}
\usepackage[all]{xy}
\usepackage{enumitem}

\usepackage{amsthm}
\usepackage[absolute]{textpos}
\usepackage{mathtools,emptypage}

\usepackage[bookmarks=true]{hyperref}
\usepackage{xcolor}
\hypersetup{
	colorlinks,
	linkcolor={red!50!black},
	citecolor={blue!50!black},
	urlcolor={blue!80!black},
}

\usepackage{tipa} 
\usepackage{tikz}

\mathtoolsset{showonlyrefs}

\theoremstyle{definition}
\newtheorem{defin}{Definition}[section]
\newtheorem{ex}[defin]{Example}
\theoremstyle{plain}
\newtheorem{theo}[defin]{Theorem}
\newtheorem{lemma}[defin]{Lemma}
\newtheorem{obs}[defin]{Remark}
\newtheorem{prop}[defin]{Proposition}
\newtheorem{cor}[defin]{Corollary}

\newtheorem{question}{Question}
\newtheorem*{question-no-number}{Question}

\newtheorem*{theorem-no-number}{Theorem}
\newtheorem{theorem}{Theorem}
\newtheorem{corollary}[theorem]{Corollary}

\newtheorem*{T1}{Theorem~\ref{theo:intro_qi_implies_biLip}}
\newtheorem*{T2}{Theorem~\ref{theo:intro_qi_cocompact}}
\newtheorem*{T3}{Theorem~\ref{theo:intro_negative_answer}}
\newtheorem*{T4}{Corollary~\ref{cor:intro_hom_mfd_not_open}}
\newtheorem*{T5}{Theorem~\ref{theo:intro_sphere_degree}}
\newtheorem*{T6}{Theorem~\ref{theo:intro_2_volume}}
\newtheorem*{T7}{Theorem~\ref{theo:intro_2}}
\newtheorem*{T8}{Theorem~\ref{theo:intro_homology_qi_R^n}}

\newcommand{\restr}[1]{\lower3pt\hbox{$|_{#1}$}}

\newcommand{\N}{\mathbb{N}}

\newcommand{\R}{\mathbb{R}}
\newcommand{\Z}{\mathbb{Z}}
\newcommand{\sfd}{{\sf d}}
\newcommand{\X}{{\rm X}}
\newcommand{\Y}{{\rm Y}}
\newcommand{\CAT}{\textup{CAT}}
\newcommand{\Isom}{\textup{Isom}}

\newcommand{\Cov}{\textup{Cov}}

\renewcommand{\d}{{\mathrm d}}

\renewenvironment{abstract}
{\par\noindent\textbf{\abstractname.}\ \ignorespaces}
{\par\medskip}

\title{CAT$(0)$ spaces quasi-isometric to Euclidean spaces}

\author[Nicola Cavallucci]{Nicola Cavallucci}
\address[Nicola Cavallucci]{Département de mathématiques,
Université de Fribourg,
Ch. du musée 23
CH-1700 Fribourg,
Switzerland}
\email{n.cavallucci23@gmail.com}
\author[Andrea Sambusetti]{Andrea Sambusetti}
\address[Andrea Sambusetti]{Dipartimento di Matematica G.Castelnuovo, "La Sapienza" Università di Roma, Piazzale Aldo Moro, 00185 Roma, Italy}
\email{sambuset@mat.uniroma1.it}

\keywords{CAT(0) space, quasi-isometry, homology manifold, degree, Tits boundary, asymptotic geometry}
\subjclass[2020]{53C23, 53C21}

\begin{document}
	\maketitle
	\begin{abstract}
		\footnotesize
		We show that if a proper, geodesically complete, $\CAT(0)$ homology manifold is quasi-isometric to the Euclidean space $\R^n$ then it is homeomorphic to $\R^n$. 
        On the other hand, we show that there exist proper, geodesically complete, $\CAT(0)$ spaces quasi-isometric to $\R^n$, which are not homeomorphic to it. We prove that our example is sharp in a suitable sense.
        Finally, we provide an example of a sequence of proper, geodesically complete, $\CAT(0)$ spaces that are not homology manifolds and that converge in the Gromov-Hausdorff sense to a topological manifold: this shows that the set of topological manifolds is not open in the class of proper, geodesically complete, $\CAT(0)$ spaces.
	\end{abstract}

	\tableofcontents
    \newpage

\section{Introduction}

Given $L\ge 1$ and $C\ge 0$, two metric spaces $(\X,\sfd_\X), (\Y,\sfd_\Y)$ are $(L,C)$-quasi-isometric if there exists a not necessarily continuous function $f\colon \X \to \Y$ such that:
\begin{itemize}
	\item[(i)] $\frac{1}{L} \sfd_\X(x,x') - C \le \sfd_\Y(f(x),f(x')) \le L \sfd_\X(x,x') + C$ for every $x,x' \in \X$;
	\item[(ii)] for every $y\in \Y$ there exists $x \in \X$ such that $\sfd_\Y(y,f(x)) \le C$.
\end{itemize}
Two metric spaces are {\em quasi-isometric} if they are $(L,C)$-quasi-isometric for some $L\ge 1, C\ge 0$. 
They are {\em  $L$-biLipschitz homeomorphic} if the same holds for $C=0$.
\vspace{1mm}

In this paper we address the following:

\begin{question}
\label{question:intro}
    If $(\X,\sfd)$ is a proper, geodesically complete, $\CAT(0)$ space which is $(L, C)$-quasi-isometric to $\R^n$, then is it homeomorphic to $\R^n$?
\end{question} 

We notice that the constant $C$ in the definition of quasi-isometry plays no role in Question \ref{question:intro}.\linebreak Indeed, for every proper, geodesically complete, $\CAT(0)$ space $(\X,\sfd)$ which is $(L,C)$-quasi-isometric to $\R^n$ and for every $c>0$, a suitable rescaling $(\X,\lambda\sfd)$ is $(L,c)$-quasi-isometric to $\R^n$.
On the other hand, we will see that the value of the constant $L$ matters.
\vspace{2mm}

A first, positive answer to Question \ref{question:intro} under the additional assumption that  $(\X,\sfd)$ has a discrete, cocompact group of isometries is essentially known  because of \cite[Corollary C]{AB97}. In fact, applying an unpublished result in \cite{Bosche2011}, the following   result holds as soon as a proper, geodesically complete, $\CAT(0)$ space has the full isometry group which is cocompact.


\begin{theorem}
	\label{theo:intro_qi_cocompact}
	Let $(\X,\sfd)$ be a proper, geodesically complete, $\CAT(0)$ space such that $\Isom(\X,\sfd)$ acts cocompactly on $\X$. If $(\X,\sfd)$ is quasi-isometric to $\R^n$, then $(\X,\sfd)$ is isometric to $\R^n$.
 \end{theorem}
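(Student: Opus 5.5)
The plan is to reduce to the structure theory of cocompact $\CAT(0)$ spaces and then use quasi-isometry invariants to pin the space down. Since $\Isom(\X,\sfd)$ acts cocompactly and $\X$ is proper and geodesically complete, I would first invoke the (unpublished) result of \cite{Bosche2011}. It upgrades the cocompact action of the full isometry group to a setting where the decomposition theorems of Caprace–Monod and the rigidity results of \cite{AB97} apply. Concretely, I would use the de Rham-type splitting $\X = \R^k \times \X_1 \times \cdots \times \X_m$, where each $\X_i$ is irreducible and not a line.

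Each irreducible factor $\X_i$ is then either a symmetric space of noncompact type, a Euclidean building of rank $\ge 2$, or a space whose isometry group is totally disconnected with no Euclidean factor. In the last case there is rank-one behaviour, namely a rank-one isometry, by the rank rigidity theorem in the geodesically complete cocompact setting.

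The second step is to rule out every nonflat factor using quasi-isometry invariants of $\R^n$. The cleanest invariant is polynomial volume growth, or equivalently asymptotic cones. Every asymptotic cone of $\R^n$ is $\R^n$, and quasi-isometries induce biLipschitz homeomorphisms of asymptotic cones. So every asymptotic cone of $\X$ is biLipschitz to $\R^n$, and in particular has no cut points and is a topological manifold. This rules out the following cases:
\begin{itemize}
\item a rank-one factor, whose cone is a product with a factor containing an $\R$-tree with branching, hence not a manifold;
\item a higher-rank symmetric space or building, whose asymptotic cone is a non-discrete Euclidean building that branches along walls, hence not locally Euclidean.
\end{itemize}
Alternatively, I would argue via the Tits boundary. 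Corollary C of \cite{AB97} states that a cocompact proper geodesically complete $\CAT(0)$ space whose Tits boundary, or cone, is a sphere, or equivalently which has polynomial growth, is flat. Here the boundary of $\X$ is quasi-isometry-invariantly detected as $S^{n-1}$ via the homology of the cone at infinity.

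With only the factor $\R^k$ left, $\X = \R^k$, and quasi-isometry invariance of dimension (growth degree) gives $k=n$. The main obstacle is making the first step rigorous: passing from "$\Isom$ cocompact" to a discrete cocompact setting, or directly to a splitting, so that \cite{AB97} applies. Bosché's result is exactly what handles this. If it were unavailable, I would try to apply the Caprace–Monod structure theory directly, which needs only a cocompact isometry group. The asymptotic-cone argument then applies without needing a lattice.
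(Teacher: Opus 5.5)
\paragraph{The gap: the classification of irreducible factors.}
After splitting $\X=\R^k\times\X_1\times\cdots\times\X_m$, you assert that a factor with totally disconnected isometry group is either a Euclidean building of rank $\ge 2$ or contains a rank-one isometry ``by the rank rigidity theorem in the geodesically complete cocompact setting''. No such theorem is available. For general proper, geodesically complete $\CAT(0)$ spaces this is the Ballmann--Buyalo rank rigidity \emph{conjecture}. It is known only in special classes (e.g.\ Riemannian manifolds, cube complexes) and is open even when a discrete cocompact group is present, let alone when only $\Isom(\X,\sfd)$ is cocompact. So your asymptotic-cone exclusion (branching $\R$-trees, branching Euclidean buildings) only disposes of the factor types you listed; an irreducible factor of unknown type is not ruled out.

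\paragraph{Why your fallbacks do not close the gap.}
First, \cite[Corollary C]{AB97} concerns \emph{discrete} cocompact amenable groups. A cocompact $\Isom(\X,\sfd)$ need not contain any uniform lattice, so there is no general way to ``pass to a discrete cocompact setting''.

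Second, Bosch\'e's result is not such an upgrade. \cite[Proposition 7]{Bosche2011} directly asserts flatness from compactness of $\partial_T\X$ plus cocompactness. That is all the paper's short proof uses, after Proposition~\ref{prop:QI_to_R^n_has_biLip_asymptotic_cones} gives compactness of $\partial_T\X$.

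Third, if you replace Corollary C by the amenable dichotomy of \cite{AB97} (fixed point at infinity or invariant flat), the fixed-point case remains and you never address it. It is a real case: the solvable group $AN$ acts simply transitively on real hyperbolic space while fixing a boundary point.

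Finally, knowing only that $\partial_T\X$ is a homotopy sphere cannot give flatness. The Euclidean cone over a circle of length $\ell>2\pi$ is biLipschitz to $\R^2$ and its Tits boundary is a topological circle, yet it is not flat. Cocompactness has to be used to make $\partial_T\X$ \emph{round}.

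\paragraph{How the paper's alternative proof does this.}
It uses no structure theory. Your observation that an asymptotic cone $C_\omega\X$ is biLipschitz to $\R^n$ is the right starting point. The missing input is that the ultralimit of $\Isom(\X,\sfd)$ acts \emph{transitively} on $C_\omega\X$, because the rescaled cocompact actions have codiameter tending to $0$. The argument then runs as follows.
\begin{enumerate}
\item A proper, homogeneous $\CAT(0)$ space that is a topological manifold is, by \cite{Berestovskii1989}, a Lie group with a sub-Finsler Carnot--Carath\'eodory metric.
\item Hausdorff and topological dimension agree for geodesically complete $\CAT(0)$ spaces, which forces the metric to be Finsler, hence Riemannian by \cite{BuckleyFalkWraith2009}.
\item BiLipschitz equivalence with $\R^n$ forces the group to be abelian by \cite{Pauls2001}, so $C_\omega\X\cong\R^n$ and $\partial_T\X\cong\mathbb{S}^{n-1}$ isometrically.
\item The join decomposition $\mathbb{S}^{n-1}\cong\mathbb{S}^0\ast\mathbb{S}^{n-2}$ splits off a factor, and induction on $n$ gives $\X\cong\R^n$.
\end{enumerate}
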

 
 In Section  \ref{sec:Theorems_Nagano_and_cocompact} we will provide a short,  self-contained proof of this fact. 
 
One of the first results of this paper answers Question \ref{question:intro} in the affirmative for every space which is a homology manifold.

\begin{theorem}
	\label{theo:intro_homology_qi_R^n}
	Let $(\X,\sfd)$ be a proper, $\CAT(0)$ homology manifold. If $(\X,\sfd)$ is quasi-isometric to $\R^n$, then $\X$ is homeomorphic to $\R^n$.
\end{theorem}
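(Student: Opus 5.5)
The plan is to go through the asymptotic cone, and then use the local-to-global structure theory of $\CAT(\kappa)$ homology manifolds due to Lytchak and Nagano. Throughout we use the theorem of Nagano recalled in Section \ref{sec:Theorems_Nagano_and_cocompact}, which I take to say the following: a proper $\CAT(0)$ homology $m$-manifold in which some metric sphere $S(x,R)$ is homotopy equivalent (equivalently, homeomorphic) to $S^{m-1}$ has the closed ball $\overline{B}(x,R)$ homeomorphic to the closed $m$-disk. I have not checked that this is exactly how the result is stated there.

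\textbf{Step 1: preliminaries.} A $\CAT(0)$ homology manifold is automatically geodesically complete, because a geodesic that cannot be extended would produce a point with trivial local homology. Fix a base point $x\in\X$ and let $m$ be the homological dimension of $\X$. Since $\X$ is quasi-isometric to $\R^n$, the volume growth of the Hausdorff measure is polynomial of degree $n$. Geodesic completeness gives uniform local doubling on balls, so the growth of $\mathcal{H}^m(B(x,R))$ is comparable to $R^m$. Comparing the two growths shows $m=n$. Alternatively, compare asymptotic dimension with the dimension of the Tits boundary.

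\textbf{Step 2: the asymptotic cone.} Let $f\colon \X\to\R^n$ be an $(L,C)$-quasi-isometry. Rescaling by $1/R_k$ with $R_k\to\infty$, the maps $f$ induce an $L$-biLipschitz homeomorphism from any asymptotic cone $\X_\omega$ onto $\R^n$. The cone $\X_\omega$ is a proper, geodesically complete $\CAT(0)$ space, namely the Euclidean cone over $\partial_T\X$. In $\X_\omega$ the unit sphere around the cone point is therefore homeomorphic to $S^{n-1}$. The spheres $\frac{1}{R_k}S(x,R_k)$ converge in the Gromov--Hausdorff sense to this unit sphere. Moreover, in a geodesically complete $\CAT(0)$ space the geodesic retraction $S(x,R')\to S(x,R)$ for $R'>R$ is a homotopy equivalence, so all large spheres around $x$ are homotopy equivalent to one another.

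\textbf{Step 3: transferring sphere topology (main obstacle).} The goal is to conclude that $S(x,R)$ is a homotopy $(n-1)$-sphere for large $R$. Gromov--Hausdorff convergence alone does not transfer homotopy type. I would first try the stability argument of Lytchak--Nagano for limits of $\CAT(\kappa)$ homology manifolds with uniform local geometry. This requires non-collapsing, i.e. $\mathcal{H}^n\bigl(\tfrac{1}{R_k}B(x,R_k)\bigr)$ bounded below, which is what Step 1 gives. If it applies, it shows that the rescaled spheres are uniformly locally contractible, so by Petersen's theorem they are eventually homotopy equivalent to the limit $S^{n-1}$. A fallback is to use the degree of the composition $S(x,R)\to\X_\omega\to\R^n\setminus\{0\}$ to produce nonzero top homology and a homotopy inverse. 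Either way, this is the step I expect to be hardest.

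\textbf{Step 4: conclusion.} Given Step 3, Nagano's theorem yields that each ball $\overline{B}(x,R)$ with $R$ large is homeomorphic to a closed $n$-disk, with boundary the sphere $S(x,R)$. The geodesic retractions then identify $\X$ with an increasing union of closed disks, each contained in the interior of the next. Such a union is homeomorphic to $\R^n$, for instance via Brown's monotone union theorem. The annuli between consecutive disks are collared because radial geodesics give a product structure, which provides the local flatness of the boundary spheres needed to apply the theorem.
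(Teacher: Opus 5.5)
Your Step 3 is not where the difficulty lies. Once the cone at infinity exists, Nagano's stability result (Proposition \ref{prop:homotopy_stability_large_spheres}) already gives that every large sphere $S_R(x)$ is homotopy equivalent to $\partial_T\X$, hence to $\mathbb{S}^{n-1}$ by Proposition \ref{prop:QI_to_R^n_has_biLip_asymptotic_cones}.

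The genuine gap is Step 4: the statement you attribute to Nagano is false, even for $\CAT(0)$ topological manifolds homeomorphic to $\R^n$. Take the example of Remark \ref{obs:spaceofdirectionnothomeotosphere}, $\X=\textup{Cone}(\Sigma)$ with $\Sigma$ homotopy equivalent but not homeomorphic to $\mathbb{S}^{n-1}$. Every sphere $S_R(o)\cong\Sigma$ is a homotopy sphere, yet $\overline{B}_R(o)$ is not a closed disk. A homeomorphism with $D^n$ would have to carry $\partial D^n$ onto the points of vanishing local homology, i.e.\ onto $S_R(o)$, which is not homeomorphic to $\mathbb{S}^{n-1}$.

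So ``homotopy equivalent (equivalently, homeomorphic)'' is wrong, and an exhaustion by closed disks is not available. The same confusion occurs in Step 2: the unit sphere of the asymptotic cone is $\partial_T\X$, which is only known to be a homology manifold homotopy equivalent to $\mathbb{S}^{n-1}$. Your auxiliary claims also fail:
\begin{itemize}
\item the contraction $S_{R'}(x)\to S_R(x)$ is in general not a homotopy equivalence. It is not $\pi_1$-injective in the Davis--Januszkiewicz manifolds that are not simply connected at infinity, and in the spaces $\X_k$ of Theorem \ref{theo:intro_negative_answer} it is not even a homology isomorphism;
\item radial geodesics give no product structure on annuli, since geodesics can branch.
\end{itemize}

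More fundamentally, the result of Nagano that can actually be used, \cite[Theorem 4.7]{Nagano2022}, presupposes that $\X$ is a topological manifold, and your argument never establishes this. Beyond Step 1 the homology-manifold hypothesis plays no role, and knowing the large spheres says nothing by itself about the small ones (again, see $\X_k$). The missing idea is to prove that small spheres, equivalently the spaces of directions, are simply connected. The paper does this as follows.
\begin{enumerate}
\item By Thurston, every $S_r(x)$ is a homology $(n-1)$-manifold and the contractions $\varphi^{R,r}_x$ have acyclic fibers.
\item Vietoris--Begle then shows that all spheres have the homology of $\mathbb{S}^{n-1}$ and that $\varphi^{R,r}_x$ has degree $\pm1$.
\item For $r$ small and $R$ large the spheres are locally contractible, so one can lift $\varphi^{R,r}_x$ to the covering of $S_r(x)$ associated with its $\pi_1$-image. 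Multiplicativity of the degree forces this covering to be one-sheeted, so $\varphi^{R,r}_x$ is $\pi_1$-surjective.
\item Large spheres are simply connected for $n\ge 3$, so small spheres are simply connected homology spheres, hence homotopy spheres. The Lytchak--Nagano criterion (Proposition \ref{prop:characterization_top_manifolds}) then makes $\X$ a topological manifold.
\end{enumerate}
Only then does one conclude. The set $\X\setminus B_r(x)$ deformation retracts onto $S_r(x)$ along geodesics towards $x$, so simply connected large spheres give simple connectivity at infinity. The classical characterization of $\R^n$ among contractible manifolds simply connected at infinity then yields $\X\cong\R^n$, with $n=2$ handled directly.
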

 
We stress the fact that the statement of Theorem \ref{theo:intro_homology_qi_R^n} is definitely  not trivial, since 
the class of proper, $\CAT(0)$ homology manifolds contains many spaces with topological and metric properties extremely different to the ones of Riemannian manifolds of non-positive curvature. For instance, for every $n\ge 5$ there are $\CAT(0)$ topological $n$-manifolds that are not simply connected at infinity, hence not homeomorphic to $\R^n$, see \cite[Theorem 5b.1]{DavisJanuszkiewicz1991}. On the other hand, it has been proved in \cite[Theorem 1.1]{LytchakNaganoStadler2024} that every $\CAT(0)$ topological $n$-manifold with $n\le 4$ is homeomorphic to $\R^n$. This is no longer true for $\CAT(0)$ homology $4$-manifolds: for that, it is enough to consider the Euclidean cone over the Poincaré homology sphere. \linebreak  A more detailed description and more examples of $\CAT(0)$ homology and topological manifolds are presented in Section \ref{subsec:CAT_hom_top_mfds}.

\vspace{2mm}
A  general, positive  answer  to Question \ref{question:intro}, without any cocompacity or topological  assumption is the following statement, 
which is essentially contained in \cite{Nagano2022}, and which deal with   the case when $L$ is small enough, see again Section \ref{sec:Theorems_Nagano_and_cocompact}.

\begin{theorem}
	\label{theo:intro_qi_implies_biLip}
	Given $\varepsilon \! > \! 0$ and $n \in \N$, there exists $\delta \!=\! \delta(\varepsilon, n)\! >\! 0$ such that the following holds. 
	Let $(\X,\sfd)$ be a proper, geodesically complete, $\CAT(0)$ space.
    \begin{itemize}[leftmargin=7mm]
    \item[(i)] If $(\X,\sfd)$ is $(1+\delta, C)$-quasi-isometric to $\R^n$, then  $(\X,\sfd)$
      is $(1+\varepsilon)$-biLipschitz homeomorphic to $\R^n$.
    \item[(ii)] Moreover, if $(\X,\sfd)$ is $(1, C)$-quasi-isometric to $\R^n$, then $(\X,\sfd)$ is isometric to $\R^n$.
    \end{itemize}
\end{theorem}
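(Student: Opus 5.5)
The strategy is to pass to asymptotic cones and the Tits boundary, and then use Nagano's results on almost-Euclidean $\CAT(0)$ spaces.

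\textbf{Step 1 (asymptotic cone).} Let $f\colon \X\to\R^n$ be a $(1+\delta,C)$-quasi-isometry. Rescale by $\lambda_k\to 0$ and take ultralimits. The asymptotic cone $\X_\omega$ of $\X$ is then $(1+\delta)$-biLipschitz homeomorphic to $\R^n$, since $C$ disappears in the limit. Because $\X$ is proper, geodesically complete and $\CAT(0)$, the cone $\X_\omega$ is a complete, geodesically complete $\CAT(0)$ space. For proper $\CAT(0)$ spaces, the Euclidean cone over the Tits boundary, $C_T\partial\X$, embeds isometrically in $\X_\omega$ as the cone over the rays through the basepoint. I will argue that this embedding is onto: geodesic completeness gives rays of $\X$ through every point, and the biLipschitz equivalence with $\R^n$ gives the dimension and compactness control needed. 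Then $C_T\partial\X$ is $(1+\delta)$-biLipschitz to $\R^n$. In particular $\partial_T\X$ is a compact $\CAT(1)$ space, $(1+\delta)$-biLipschitz close to $\mathbb S^{n-1}$, with the cone structure preserved up to a small error.

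\textbf{Step 2 (rigidity of the boundary).} Next I invoke Nagano's theorem: a $\CAT(1)$ space that is geodesically complete in dimension $n-1$ and whose volume, or equivalently whose Gromov--Hausdorff distance to $\mathbb S^{n-1}$, is close to that of the round sphere is biLipschitz close to $\mathbb S^{n-1}$. For $\delta=0$, $\partial_T\X$ is isometric to $\mathbb S^{n-1}$. In that case the sphere-rigidity theorem for proper geodesically complete $\CAT(0)$ spaces whose Tits boundary is the round sphere $\mathbb S^{n-1}$ (Leeb, or the flat-splitting argument: a flat of dimension $n$ exists and geodesic completeness forces $\X$ to equal it) shows that $\X$ is isometric to $\R^n$. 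This proves (ii).

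\textbf{Step 3 (from infinity back to $\X$, part (i)).} Fix $p\in\X$ and consider the logarithm map $\log_p\colon \X\to C_T\partial\X$ given by $x\mapsto [\xi_x, \sfd(p,x)]$, where $\xi_x$ is the endpoint of a ray from $p$ through $x$. Its inverse is the exponential map sending the ray to $\xi$. I aim to show that $\exp_p$ is $(1+\varepsilon)$-biLipschitz from $C_T\partial\X\cong_{1+\delta'}\R^n$ onto $\X$. The $\CAT(0)$ comparison gives $\sfd(\exp_p v,\exp_p w)\le |v-w|$ for the Tits cone metric, so $\exp_p$ is $1$-Lipschitz. For the lower bound, suppose two rays from $p$ with Tits angle $\theta$ stay closer than $(1-\varepsilon)$ times the cone distance at some scale. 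Then, by the monotonicity of comparison angles, this near-degeneracy persists, and the volume at infinity of the geodesically complete space $\X$ becomes too small compared with the Euclidean volume forced by the quasi-isometry. Here Nagano's volume rigidity (almost-maximal volume growth implies almost-Euclidean) applies, with the Hausdorff measure of $\X$ at large scales compared to the cone volume $\omega_n R^n$ through the quasi-isometry.

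\textbf{Main obstacle.} The main obstacle is Step 3. Converting closeness at infinity into a uniform biLipschitz estimate at all scales, including small ones near $p$ and near every other basepoint, is where I expect the difficulty. The plan is to combine monotonicity of $\mathcal H^n(B(x,r))/r^n$ in geodesically complete $\CAT(0)$ spaces, which is nonincreasing from the infinitesimal to the asymptotic value by Nagano, with the upper bound $\omega_n(1+\delta)^n$ at infinity and the lower bound $\omega_n$ at every point. This pinches every ball into being almost Euclidean, and Nagano's almost-rigidity theorem then yields the $(1+\varepsilon)$-biLipschitz homeomorphism.
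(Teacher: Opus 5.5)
Your part (ii) is correct. For $\delta=0$ the cone at infinity $\textup{Cone}(\partial_T\X)$ is isometric to $\R^n$, so $\partial_T\X\cong\mathbb{S}^{n-1}$, and the splitting argument for geodesically complete spaces gives flatness. This is exactly the alternative remarked at the end of the paper's proof; the main one instead applies (i) with $\varepsilon_k=1/k$ and takes an Arzel\`a--Ascoli limit. Part (i), however, has a genuine gap in Step 3.

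\textbf{Why Step 3 fails as written.} Geodesics from $p$ can branch in a geodesically complete $\CAT(0)$ space. So $\log_p$ is not well defined, and $\exp_p\colon\textup{Cone}(\partial_T\X)\to\X$, though $1$-Lipschitz and surjective, need not be injective; excluding branching is part of what must be proved. Your contradiction mechanism also points the wrong way.
\begin{itemize}
\item Comparison angles at $p$ increase to the Tits angle as $t\to\infty$, so a degeneracy at a finite scale does not propagate outward.
\item The volume at infinity can never be smaller than Euclidean. The ratio $r\mapsto\mathcal{H}^n(B_r(x))/r^n$ is \emph{nondecreasing}, not nonincreasing, because $\phi^{R,r}_x$ is surjective and $\frac{r}{R}$-Lipschitz. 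Hence $\textup{as-vol}_n(\X)\ge 1$ at any point of dimension $n$.
\item With this direction your pinching $\omega_n r^n\le\mathcal{H}^n(B_r(x))\le\omega_n\,\textup{as-vol}_n(\X)\,r^n$ does hold, but only where $\dim(x;\X)=n$. Pure $n$-dimensionality of $\X$ is not available a priori (cf.\ Remark~\ref{rmk:2-dim-qi_implies_pure_dimensional}).
\item Passing from volume pinching to a global $(1+\varepsilon)$-biLipschitz map is precisely the content of Nagano's theorem, which your sketch does not replace.
\end{itemize}

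\textbf{What is actually needed.} The argument reduces to citing Theorem~\ref{theo:Nagano_biLip}, as the paper does, and what you skip is checking its hypotheses.
\begin{itemize}
\item You need $\textup{as-vol}_n(\X)=\textup{as-vol}_n(C_\infty\X)=\mathcal{H}^{n-1}(\partial_T\X)/\mathcal{H}^{n-1}(\mathbb{S}^{n-1})$, by \eqref{eqasvol-nagano} and Lemma~\ref{lemma:as_vol_cone_infinity}. The quasi-isometry says nothing about Hausdorff measure at small scales. It is the biLipschitz map $C_\infty\X\to\R^n$ that bounds this quantity by a power of $1+\delta$, and $\delta$ is then chosen so the bound is at most $1+\delta_0(\varepsilon,n)$.
\item The dimension hypothesis is placed on $\partial_T\X$, not on $\X$. $\partial_T\X$ is the space of directions at the vertex of $C_\infty\X\cong\textup{Cone}(\partial_T\X)$, which is a topological $n$-manifold. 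Hence $\partial_T\X$ is a homology $(n-1)$-manifold by Proposition~\ref{prop:characterization_hom_manifolds}. It is therefore compact and purely $(n-1)$-dimensional by Proposition~\ref{prop:properties_asymptotic_cone} and Lemma~\ref{lemma:CAT_homology_are_purely_dimensional}.
\end{itemize}

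Two remarks on Step 1. The identification $C_\infty\X\cong\textup{Cone}(\partial_T\X)$ should be cited (Nagano); your surjectivity sketch does not prove it. Your claim that $\partial_T\X$ is itself $(1+\delta)$-biLipschitz close to $\mathbb{S}^{n-1}$ does not follow, since a biLipschitz map between cones need not respect the radial structure. Fortunately that claim is not needed.
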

 
A sharper,  general affirmative answer to Question \ref{question:intro} is provided by the following result, that highlights again the crucial role of the multiplicative constant $L$ of the quasi-isometry.

\begin{theorem}
    \label{theo:intro_2}
    If $(\X,\sfd)$ is a proper, geodesically complete, $\CAT(0)$ space  which is $(L,C)$-quasi-isometric to $\R^n$ for $L<2^{1/n}$, then $\X$ is 
    homeomorphic to $\R^n$.
\end{theorem}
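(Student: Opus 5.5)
The plan is to pass to asymptotic cones, reduce the homeomorphism question to the local structure of $\X$ at a point, and then use a volume argument to show that $\X$ has no branching.

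\textbf{Step 1 (asymptotic cones).} Fix a basepoint $x_0$ and a sequence $\lambda_k\to 0$. The rescaled spaces $(\X,\lambda_k\sfd,x_0)$ subconverge to an asymptotic cone $\X_\omega$. This cone is a proper, geodesically complete $\CAT(0)$ space, and it is $L$-biLipschitz homeomorphic to $\R^n$, since the additive constant $C$ disappears in the limit. Its Tits boundary $\partial_T\X$ embeds isometrically as the space of directions at the cone point. In fact the Euclidean cone $C_T\partial\X$ sits inside $\X_\omega$ as the cone over the base point; it is a closed convex subset, and by geodesic completeness it should be all of $\X_\omega$. So $C_T\partial \X$ is $L$-biLipschitz to $\R^n$.

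\textbf{Step 2 (volume comparison).} Let $\mathcal H^n$ denote Hausdorff measure. On one side, $\mathcal H^n(B(o,1)\cap C_T\partial\X)\le L^n\omega_n$. On the other side, for any point $x$ of the original space $\X$, the space of directions $\Sigma_x\X$ has the following property: the logarithm map (extension of geodesics) together with the monotonicity of volume ratios in geodesically complete $\CAT(0)$ spaces (Lytchak–Nagano) gives
\[
\frac{\mathcal H^n(B(x,r))}{\omega_n r^n}\ \text{nondecreasing in } r,
\]
with lower limit $\mathcal H^{n-1}(\Sigma_x)/\mathcal H^{n-1}(S^{n-1})$ as $r\to 0$ and upper limit at $r\to\infty$ equal to the corresponding density of the asymptotic cone, which is at most $L^n$. (Dimension: $\X$ has geometric dimension $n$, by the quasi-isometry and a packing argument.)

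\textbf{Step 3 (density gap).} In an $n$-dimensional geodesically complete $\CAT(0)$ space, any point whose density is less than $2$ is a manifold point. The key fact is that if $\Sigma_x$ is not homeomorphic to $S^{n-1}$, then $\Sigma_x$ contains branching. One then shows inductively, via a Lytchak–Nagano-type result on spaces of directions, that $\mathcal H^{n-1}(\Sigma_x)\ge 2\,\mathcal H^{n-1}(S^{n-1})$, by exhibiting two "half-sphere-covering" sheets. Hence every density is at most $L^n<2$, which forces $\Sigma_x$ to be a sphere with density $<2$ at every point. Then $\X$ is a topological manifold, and indeed a homology manifold. Finally Theorem~\ref{theo:intro_homology_qi_R^n} concludes that $\X$ is homeomorphic to $\R^n$.

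\textbf{Main obstacle.} The hard part is Step 3: proving the sharp density gap "non-manifold point $\Rightarrow$ density $\ge 2$". I would try induction on dimension through the spaces of directions. The base case is a graph, where a branch point has at least three directions, giving density $\ge 3/2$. That only yields a gap of $3/2$, so the argument needs refinement. Concretely, use geodesic completeness to produce, for a branching direction, two distinct antipodal extensions; each of these spans a full hemisphere-cone, and together they force density at least $2$ in the sense of comparing with $n$-dimensional measures. Justifying this measure estimate is where I expect the real work to lie.
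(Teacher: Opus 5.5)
There is a genuine gap, and it is exactly in Step 3. Refining the measure estimate will not fix it, because the claimed density gap is false. Glue three closed Euclidean half-planes along their common boundary line and take the product with $\R^{n-2}$. The result is a proper, geodesically complete, purely $n$-dimensional $\CAT(0)$ space. At every point of the branching hyperplane, the space of directions is $\mathbb{S}^{n-2}\ast F$ with $\#F=3$, i.e. three hemispheres of $\mathbb{S}^{n-1}$ glued along their boundary. The density there is $3/2<2$, yet $H_n(\X,\X\setminus\{p\})\simeq\Z^2$, so these are not even homology-manifold points.

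Your ``two hemisphere-cones'' heuristic fails on the same example. In the theta graph with three edges of length $\pi$, the midpoint of one edge has two antipodes. The two corresponding great circles share that edge, so their union has length $3\pi$, not $4\pi$. The $3/2$ you found in the base case is the truth, not an artifact: Nagano showed $3/2$ is sharp for the asymptotic-volume criterion. In fact Steps 2--3 use only the number $\textup{as-vol}_n(\X)\le L^n<2$. If they were correct, they would prove that the space above is a manifold.

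There is a second gap. $n$-dimensional density is $0$ at points of lower local dimension, so your argument says nothing about pure-dimensionality, which is part of what must be proved. Also, ``not homeomorphic to $\mathbb{S}^{n-1}$ implies branching'' is false: there are $\CAT(1)$ homology manifolds homotopy equivalent but not homeomorphic to spheres. You only need $\Sigma_x\X\simeq\mathbb{S}^{n-1}$ up to homotopy.

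The missing idea is to keep the topology obtained in Step 1, not just the volume. Since $\X$ is quasi-isometric to $\R^n$, $\partial_T\X$ is a compact $\CAT(1)$ homology $(n-1)$-manifold homotopy equivalent to $\mathbb{S}^{n-1}$. Its volume satisfies $\mathcal{H}^{n-1}(\partial_T\X)=\textup{as-vol}_n(\X)\,\mathcal{H}^{n-1}(\mathbb{S}^{n-1})<2\,\mathcal{H}^{n-1}(\mathbb{S}^{n-1})$. For every $x$ there is a $1$-Lipschitz surjection $\partial\log_x\colon\partial_T\X\to\Sigma_x\X$.

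The paper then proves (Theorem~\ref{theo:intro_sphere_degree}) that such a same-dimensional $1$-Lipschitz image of a ``small'' homology sphere is a homology manifold homotopy equivalent to a sphere. The steps are:
\begin{itemize}
\item The homology-manifold property follows by induction on the codimension of the maximal round join factor, via the maps $\Y'\to\mathbb{S}^0\ast\Sigma_{y'}\Y'$. The three-hemisphere case is excluded by a separate folding argument.
\item Degree theory plus the area formula show that a non-orientable target, a nontrivial covering of the target, or degree $\neq\pm1$ would each force almost every point to have at least two preimages. That gives $\mathcal{H}^{n-1}(\partial_T\X)\ge 2\,\mathcal{H}^{n-1}(\Sigma_x\X)\ge 2\,\mathcal{H}^{n-1}(\mathbb{S}^{n-1})$, a contradiction.
\item Poincar\'e duality then gives the homology of a sphere, and lifting to the universal cover gives simple connectivity.
\end{itemize}

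So the ``doubling'' happens in the source $\partial_T\X$, not in $\Sigma_x\X$. For instance, the theta graph has length $3\pi$, but any $1$-Lipschitz loop mapping onto it has length at least $4\pi$. Pure-dimensionality is then deduced from the top-dimensional points, and only after that does Theorem~\ref{theo:intro_homology_qi_R^n} apply.
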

We stress that Theorem \ref{theo:intro_homology_qi_R^n} is a fundamental step for Theorem \ref{theo:intro_2}, as we will explain  in \ref{sec:outline}. \\ 
Let us briefly explain how the constant $2^{1/n}$ appears here.
In Section \ref{subsec:CAT_QI_R^n}, we will prove that if  a proper, geodesically complete, $\CAT(0)$ space  $(\X,\sfd)$    is $(L,C)$-quasi-isometric to $\R^n$, then the Euclidean cone 
over the Tits boundary $\partial_T\X$ is $L$-biLipschitz homeomorphic to $\R^{n}$. This leads to the inequality
\begin{equation}
    \label{eq:biLip_Tits_sphere_volume}
    \mathcal{H}^{n-1}(\partial_T\X) \le L^{n} \cdot \mathcal{H}^{n-1}(\mathbb{S}^{n-1}).
\end{equation}
Then, the reason for the constant $2^{1/n}$ appearing in Theorem \ref{theo:intro_2}   lies in the following deep result, of which Theorem \ref{theo:intro_2} is a direct consequence.
\begin{theorem}
    \label{theo:intro_2_volume}
    Let $(\X,\sfd)$ be a proper, geodesically complete, $\CAT(0)$ space which is quasi-isometric to $\R^n$. If $\mathcal{H}^{n-1}(\partial_T\X) <2 \cdot \mathcal{H}^{n-1}(\mathbb{S}^{n-1})$ then $\X$ is 
homeomorphic to $\R^n$.
\end{theorem}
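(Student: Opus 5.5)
The plan is to reduce Theorem \ref{theo:intro_2_volume} to Theorem \ref{theo:intro_homology_qi_R^n}, by showing that the volume bound on the Tits boundary forces $\X$ to be a homology $n$-manifold.

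\textbf{Step 1: the Tits boundary.} Since $\X$ is $(L,C)$-quasi-isometric to $\R^n$, the Euclidean cone $C_T\X$ over $\partial_T\X$ is $L$-biLipschitz homeomorphic to $\R^n$ (Section \ref{subsec:CAT_QI_R^n}). Hence $\partial_T\X$ is a $\CAT(1)$ space homeomorphic to $\mathbb{S}^{n-1}$. In particular $\mathcal{H}^{n-1}(\partial_T\X)$ is finite and positive, and $\partial_T\X$ is an $(n-1)$-dimensional homology manifold.

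\textbf{Step 2: a degree argument.} Fix a point $x\in\X$. I would relate the space of directions $\Sigma_x\X$, or equivalently small spheres $S(x,r)$, to $\partial_T\X$ through the logarithm/projection maps. Concretely, consider the map $\partial_T\X\to\Sigma_x\X$ sending $\xi$ to the initial direction of the ray $[x,\xi)$. This map is $1$-Lipschitz, because Tits angles dominate angles at $x$. By geodesic completeness every direction at $x$ extends to a ray, so the map is surjective. I would also show that it has a well-defined degree with respect to suitable local homology. The core claim is that every point of $\Sigma_x\X$ with local homology different from that of $\mathbb{S}^{n-1}$ must be covered at least twice, with a nonzero degree contribution. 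For example, branching at $x$ should force at least two sheets of $\partial_T\X$ over an open set of directions.

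\textbf{Step 3: comparing volumes.} Using the $1$-Lipschitz property, I would compare $\mathcal{H}^{n-1}(\partial_T\X)$ with $\mathcal{H}^{n-1}(\Sigma_x\X)$. Since $\X$ is geodesically complete and $n$-dimensional, $\mathcal{H}^{n-1}(\Sigma_x\X)\ge \mathcal{H}^{n-1}(\mathbb{S}^{n-1})$, by the volume rigidity for geodesically complete $\CAT(1)$ spaces due to Lytchak--Nagano. Suppose the map has degree at least $2$ onto a fundamental class, or covers the set twice. Then
\[
\mathcal{H}^{n-1}(\partial_T\X)\ge 2\,\mathcal{H}^{n-1}(\mathbb{S}^{n-1}),
\]
which contradicts the hypothesis. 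It follows that $\Sigma_x\X$ is a homology $(n-1)$-sphere for every $x$, so $\X$ is a homology $n$-manifold. Theorem \ref{theo:intro_homology_qi_R^n} then gives $\X\cong\R^n$.

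\textbf{Main obstacle.} The delicate step is Step 2: rigorously showing that a non-manifold point forces multiplicity at least $2$. I would first try to show that the induced map $\mathbb{S}^{n-1}\cong\partial_T\X\to\Sigma_x\X$ is a degree-$d$ map in top homology with $d\ge1$, using contraction along geodesics to the point $x$. Then I would argue as follows:
\begin{itemize}
\item if $\Sigma_x\X$ is not a homology sphere, either its top homology or its local structure, such as a codimension-one branching locus, must produce fibers of cardinality at least $2$ on a set of positive measure;
\item this should be combined with the area formula for Lipschitz maps.
\end{itemize}
A cruder but possibly sufficient route is to compare instead with large spheres $S(x,R)$ rescaled by $1/R$, which converge to $\partial_T\X$.
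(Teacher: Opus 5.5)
Your outer framework matches the paper's: the $1$-Lipschitz surjection $\partial\log_x\colon\partial_T\X\to\Sigma_x\X$, the area formula, the lower bound $\mathcal{H}^{n-1}(\Sigma_x\X)\ge\mathcal{H}^{n-1}(\mathbb{S}^{n-1})$, and the reduction via Proposition \ref{prop:characterization_hom_manifolds} to Theorem \ref{theo:intro_homology_qi_R^n}. However, the step that carries all the content, your ``core claim'' in Step~2, is left unproved, and the mechanism you propose for it is circular.

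A degree of $\partial\log_x$, local degrees, and the parity statement of Proposition \ref{prop:degree_area_formula_CAT} all presuppose that the target is \emph{already} a homology $(n-1)$-manifold. Otherwise there is no fundamental class or local orientation to measure against. For instance, three round hemispheres glued along their common boundary, $\mathbb{S}^{n-2}\ast\{p_1,p_2,p_3\}$, form a geodesically complete $\CAT(1)$ space of volume $\tfrac32\mathcal{H}^{n-1}(\mathbb{S}^{n-1})$ with $H_{n-1}\simeq\Z^2$. Nothing in your sketch rules it out as $\Sigma_x\X$.

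Moreover, ``two sheets over an open set'' does not produce the factor $2$. The area formula only gives $\mathcal{H}^{n-1}(\partial_T\X)\ge\int_{\Sigma_x\X}\#f^{-1}\,\d\mathcal{H}^{n-1}$, so you need multiplicity $\ge 2$ almost everywhere, or a finer count. In the paper this comes only from the a.e.\ constancy of $\#f^{-1}$ mod $2$, i.e.\ from $\deg_2$, which again needs the homology-manifold structure.

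Finally, even a homology-manifold target need not be a homology sphere, and no local branching mechanism can detect that. The paper obtains the global homology in three steps. Orientability comes from Lemma \ref{lemma:map_orientable_to_non_orientable_0_degree} plus parity. Then $|\deg f|=1$ and Poincar\'e duality give surjectivity of $f_\ast$. Simple connectivity comes from lifting to the universal cover.

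The missing idea is what the paper isolates as Theorem \ref{theo:intro_sphere_degree}, proved by induction on $n-\textup{sph}(\Y')$, the codimension of the maximal round join factor of the target. Write $\Y'\cong\mathbb{S}^m\ast{\rm Z}$ and take $z\in{\rm Z}$. One composes $f$ with the $1$-Lipschitz surjection $g\colon\Y'\to\mathbb{S}^0\ast\Sigma_z\Y'\cong\mathbb{S}^{m+1}\ast\Sigma_z{\rm Z}$ of Corollary \ref{cor:map_Y_to_S^0_join_Sigma_y}, which is $\partial\log$ at $(1,z)$ in $\textup{Cone}(\Y')$. The new target has a strictly larger round factor, so the induction hypothesis applies to $g\circ f$. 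K\"unneth for joins then turns this into local homology information on $\Y'$. The first nontrivial case (the three hemispheres above) needs its own reflection-and-parity argument. Only once the target is known to be a homology manifold does the degree/area-formula machinery become available.

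You also overlook pure dimensionality. Your lower bound on $\mathcal{H}^{n-1}(\Sigma_x\X)$ holds only where $\dim(x;\X)=n$, and at lower-dimensional points your argument says nothing. The paper applies Theorem \ref{theo:intro_sphere_degree} only at $n$-dimensional points and deduces that $T_x\X$ is purely $n$-dimensional there. It then uses the proof of \cite[Proposition 8.1]{LN-finale-18} to conclude that $\X$ is purely $n$-dimensional. This is exactly where the volume hypothesis is needed for purity (cf.\ Remark \ref{rmk:2-dim-qi_implies_pure_dimensional}).

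Two smaller points:
\begin{itemize}
\item Step~1's claim that $\partial_T\X$ is homeomorphic to $\mathbb{S}^{n-1}$ is not available. Proposition \ref{prop:QI_to_R^n_has_biLip_asymptotic_cones} gives only a homology manifold homotopy equivalent to $\mathbb{S}^{n-1}$, though that is all you need.
\item The large-spheres route is circular. Passing from large to small spheres requires acyclic contraction fibres, which is precisely where the homology-manifold hypothesis enters (compare Theorem \ref{theo:intro_negative_answer}).
\end{itemize}
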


\vspace{2mm}
The last main result of our paper is to show  that Question \ref{question:intro} has a \emph{negative} answer in general, 
 and that Theorem \ref{theo:intro_2_volume} is sharp.

\begin{theorem}
\label{theo:intro_negative_answer}
      For any  $k\in \N^\ast$  there exists a proper, geodesically complete, purely $n$-dimensional, $\CAT(0)$  simplicial complex $(\X_k,\sfd_k)$ which is $(2,\frac{1}{k})$-quasi-isometric to $\R^n$, but   is not a homology manifold. Moreover, $\mathcal{H}^{n-1}(\partial_T\X_k) = 2\cdot\mathcal{H}^{n-1}(\mathbb{S}^{n-1})$ for every $k$.
\end{theorem}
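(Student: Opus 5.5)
The construction I would use is a \emph{doubled half-space}. Take two copies $H_1,H_2$ of the closed half-space $\{x\in\R^n : x_n\ge 0\}$ and a third copy $H_3$, all glued isometrically along their common boundary hyperplane $\partial H\cong\R^{n-1}$. This is a space of three half-spaces meeting along a codimension-one wall. Gluing $\CAT(0)$ spaces along isometric convex subsets preserves $\CAT(0)$ (Reshetnyak's gluing theorem), so the result $\Y$ is a proper $\CAT(0)$ space. It is geodesically complete, because every geodesic reaching the wall can be continued into another sheet. It is also purely $n$-dimensional and can be triangulated as a simplicial complex with flat simplices.

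It is not a homology manifold: at a point $p$ of the wall, the local homology $H_n(\Y,\Y\setminus p)$ is $\tilde H_{n-1}$ of the link. This link is the suspension-type join of $\mathbb{S}^{n-2}$ with three points, and $H_{n-1}$ of it is $\Z^2$, not $\Z$.

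Its Tits boundary is three closed hemispheres of $\mathbb{S}^{n-1}$ glued along their common equator. Its $\mathcal{H}^{n-1}$ is therefore $\frac32\cdot\mathcal{H}^{n-1}(\mathbb{S}^{n-1})$. This is below the threshold $2$, which contradicts Theorem \ref{theo:intro_2_volume}. The contradiction is resolved by noting that three half-spaces are not quasi-isometric to $\R^n$: a quasi-isometry would yield a biLipschitz map from the cone over the Tits boundary to $\R^n$, and that cone is $\Y$ itself, which is not a manifold. So the construction must be adjusted.

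To stay quasi-isometric to $\R^n$ while failing to be a manifold, the non-manifold part must be bounded. The natural choice is therefore $\R^n$ with a compact flap attached, for instance a flat $n$-dimensional half-disc glued along a convex $(n-1)$-disc. That flap, however, creates a boundary, so geodesic completeness fails. The fix is to make the extra sheet unbounded while keeping it within bounded distance of the main sheet at infinity.

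The plan I commit to is to let $\X$ be the union of two half-spaces $\{x_n\ge 0\}$ and a copy of $\R^n$, glued along $\{x_n=0\}$. I will realize this as $\R^n$ together with an extra upper half-space $H'$ attached along the hyperplane $\{x_n=0\}$. The map $f\colon \X\to\R^n$ is the identity on $\R^n$ and sends $H'$ to the lower half-space by $x\mapsto (x',-x_n)$.

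I then check that $f$ is $(2,0)$-biLipschitz on distances. Take points $x\in H'$ and $y$ in the upper half of $\R^n$. Their distance in $\X$ goes through the wall and equals $|x-\bar y|$, where $\bar y$ is the reflection of $y$. Their images are $\bar x$ and $y$, at distance $|\bar x - y|=|x-\bar y|$, so this pair is preserved exactly. For the other mixed pair, $x\in H'$ and $y$ in the lower half of $\R^n$, the distance in $\X$ is the broken-line distance through the wall, while the image distance is the straight distance between two points of the lower half-space. The broken distance is at most the straight distance plus $2\min(x_n,|y_n|)$. Comparing the two, the constant is $2$ in the worst case, which I would estimate via the triangle inequality; I expect it to be sharp. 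The map fails surjectivity only in that the upper half of $\R^n$ is hit twice, which is harmless since the image is everything.

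For the Tits boundary, $\partial_T\X$ is $\mathbb{S}^{n-1}$ together with one extra hemisphere glued along the equator. That gives measure $\frac32$, not $2$, so I would instead attach a second full $\R^n$ along the hyperplane. This gives two copies of $\R^n$ glued along a hyperplane, whose boundary measure is exactly $2\cdot\mathcal{H}^{n-1}(\mathbb{S}^{n-1})$. The map folds each copy identically onto $\R^n$, and the resulting constants are $(2,0)$.

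The final step is to get additive constant $\frac1k$ with a genuine simplicial structure. Additive constants are irrelevant by rescaling, as noted after Question \ref{question:intro}. Since the construction above has $C=0$ and is scale-invariant, I would take $\X_k=\X$ triangulated at mesh $\frac1k$.

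The main obstacle is verifying the multiplicative constant $2$ in the folding map across sheets, namely the inequality $|a-b|\le 2\,\sfd_\X(a,b)$ together with its reverse. This is a planar computation in the two-dimensional plane containing $a$, $b$ and their projection to the wall.
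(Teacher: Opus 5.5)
\textbf{There is a genuine gap: your final space is not quasi-isometric to $\R^n$,} and the reason is exactly the one you used to discard the three half-spaces.

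Two copies of $\R^n$ glued along a hyperplane form a Euclidean cone, namely the cone over $\mathbb{S}^{n-2}\ast\{4\text{ points}\}$, i.e.\ four hemispheres glued along an equator. So this space is its own cone at infinity. By Proposition \ref{prop:QI_to_R^n_has_biLip_asymptotic_cones}, a quasi-isometry to $\R^n$ would make it biLipschitz homeomorphic to $\R^n$. But at a wall point the local homology in degree $n$ is $\tilde H_{n-1}(\mathbb{S}^{n-2}\ast\{4\text{ points}\})\cong\Z^3$. Likewise $H_{n-1}(\partial_T\X)\cong\Z^3$, which contradicts item (i) of that proposition.

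Concretely, your folding map violates the lower bound in condition (i). The two copies of a point $(x',t)$ with $t\neq 0$ have the same image, yet they are at distance $2|t|$, which is unbounded.

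The intermediate space ($\R^n$ plus an extra half-space $H'$) is just your three half-spaces again, and its reflection map fails in the same way. The points $(x',t)\in H'$ and $(x',-t)\in\R^n$ are $2t$ apart but have equal images. So it is injectivity, not surjectivity, that fails, and the failure is fatal.

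Your claimed constants $(2,0)$ are also self-defeating. With $C=0$, conditions (i) and (ii) say that $f$ is a $2$-biLipschitz homeomorphism, which would make $\X$ a manifold. Consequently:
\begin{itemize}
\item no scale-invariant example can work;
\item triangulating at mesh $\frac1k$ is irrelevant;
\item matching the volume $2\cdot\mathcal{H}^{n-1}(\mathbb{S}^{n-1})$ by adding sheets is not enough, because the Tits boundary must also be a homology manifold homotopy equivalent to $\mathbb{S}^{n-1}$.
\end{itemize}

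\textbf{The missing idea} is a way to create a \emph{bounded} singular set, as you correctly suspected is needed, without destroying geodesic completeness or the $\CAT(0)$ condition. The paper's model at infinity is a ``big sphere.''

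For $n=2$, the paper takes $\Y=\textup{Cone}(S)$, where $S$ is a circle of length $4\pi$. This is a topological plane, $2$-biLipschitz to $\R^2$, with $\mathcal{H}^1(\partial_T\Y)=4\pi$. It then identifies two segments of length $\frac1k$ issuing from the vertex in directions at distance $2\pi$ along $S$.

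The excess cone angle is what makes this work. Split $\Y$ into two half-cones $\Y_1,\Y_2$ along a geodesic line whose two rays each make angle $\pi$ with the segments. Then each segment together with that line is convex in its half, so Reshetnyak's gluing theorem gives $\CAT(0)$. The remaining properties are straightforward:
\begin{itemize}
\item geodesic completeness is clear;
\item the glued segment consists of non-manifold points;
\item the Tits boundary is unchanged;
\item the quotient map $\Y\to\X_k$ changes distances by at most $\frac2k$, giving a quasi-isometry to $\R^2$ with multiplicative constant $2$ and additive constant of order $\frac1k$.
\end{itemize}
Taking products with $\R^{n-2}$ handles all $n\ge 3$. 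The Tits boundary is then $S\ast\mathbb{S}^{n-3}$, whose volume is $2\cdot\mathcal{H}^{n-1}(\mathbb{S}^{n-1})$.
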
 

Finally, we  point out that the sequence of spaces $(\X_k,\sfd_k)$ of  Theorem \ref{theo:intro_negative_answer} converge in the pointed Gromov-Hausdorff sense to a proper, $\CAT(0)$ topological manifold. This shows the following result.

\begin{corollary}
\label{cor:intro_hom_mfd_not_open}
    The class of proper, $\CAT(0)$ homology (or topological) manifolds is not an open subset of the class of all proper, geodesically complete  $\CAT(0)$ spaces, with respect to the Gromov-Hausdorff topology.
\end{corollary}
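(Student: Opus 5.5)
We derive the corollary from Theorem \ref{theo:intro_negative_answer}, using the pointed Gromov--Hausdorff convergence of the spaces $(\X_k,\sfd_k)$ noted just before the statement. To show that the class $\mathcal{M}$ of proper $\CAT(0)$ homology (resp.\ topological) manifolds is not open in the class $\mathcal{C}$ of proper, geodesically complete $\CAT(0)$ spaces, it suffices to exhibit some $\Y\in\mathcal{M}$ and a sequence in $\mathcal{C}\setminus\mathcal{M}$ converging to $\Y$. We take the sequence to be $(\X_k,\sfd_k,x_k)$, suitably based. Each $\X_k$ is proper, geodesically complete and $\CAT(0)$, and it is not a homology manifold. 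Since every topological manifold is a homology manifold, $\X_k$ is not a topological manifold either, so the sequence lies outside $\mathcal{M}$ for both readings of the statement.

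The main step is identifying the limit. Each $\X_k$ is $(2,\tfrac1k)$-quasi-isometric to $\R^n$, and the additive constant $\tfrac1k\to 0$. The natural guess is that the limit $\Y$ is the space whose Tits boundary has $\mathcal{H}^{n-1}$ equal to twice that of the round sphere. Concretely, one expects $\Y$ to be a Euclidean cone that is $2$-biLipschitz to $\R^n$, for instance obtained by gluing or branching two flat pieces along a codimension-one locus in a way that stays a topological $n$-manifold.

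I would prove the convergence directly from the construction in Theorem \ref{theo:intro_negative_answer}. Presumably $\X_k$ is the limit space $\Y$ with a non-manifold singular piece inserted at scale $\tfrac1k$ near the base point, or equivalently a rescaling by $\tfrac1k$ of a fixed complex $\X_1$. In the rescaling picture, $(\X_k,x_k)=(\X_1,\tfrac1k\sfd_1,x_1)$, and as $k\to\infty$ this converges to the asymptotic cone of $\X_1$. For proper $\CAT(0)$ spaces of this type, namely finite-dimensional, piecewise Euclidean and asymptotically conical, the asymptotic cone is the Euclidean cone $C(\partial_T\X_1)$ over the Tits boundary. So the next task is to show that $C(\partial_T\X_1)$ is a topological manifold, which amounts to showing that $\partial_T\X_1$ is homeomorphic to $\mathbb{S}^{n-1}$. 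By Section \ref{subsec:CAT_QI_R^n}, $C(\partial_T\X_1)$ is $2$-biLipschitz homeomorphic to $\R^n$; a biLipschitz homeomorphism is in particular a homeomorphism, so $C(\partial_T\X_1)$ is homeomorphic to $\R^n$. It is moreover proper, $\CAT(0)$ (as a Euclidean cone over a $\CAT(1)$ space) and geodesically complete. Hence $\Y$ is a proper $\CAT(0)$ topological manifold, as required.

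The main obstacle is justifying the convergence to the cone over the Tits boundary. If the construction is not literally a rescaling of one complex, I would fall back on the explicit description of $\X_k$ and build $\varepsilon$-approximations by hand on balls of fixed radius. If it is a rescaling, I would invoke the standard fact that the asymptotic cone of a proper $\CAT(0)$ space which is quasi-isometric to $\R^n$ (hence of polynomial growth) is isometric to $C(\partial_T\X)$. Once the convergence is established, the failure of openness is immediate.
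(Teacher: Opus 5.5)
Your argument is correct, but it identifies the limit differently from the paper. The paper uses the explicit construction. The quotient maps $\textup{Cone}(S)\to\X_k$ are $(1,\tfrac{2}{k})$-quasi-isometries, so $\X_k$, pointed at the image of the vertex, converges to $\textup{Cone}(S)$ (times $\R^{n-2}$ in higher dimension). That limit is visibly $2$-biLipschitz to $\R^n$, because the circle $S$ of length $4\pi$ is $2$-biLipschitz to $\mathbb{S}^1$.

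You never compute the limit. You view the sequence as blow-downs $(\X_1,\tfrac1k\sfd_1)$ and let Proposition \ref{prop:QI_to_R^n_has_biLip_asymptotic_cones}(i) certify that the limit is a topological manifold. The properness, geodesic completeness and $\CAT(0)$ property of the cone at infinity certify that it lies in the class.

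Your hedge about whether $\X_k$ is literally a rescaling of $\X_1$ is unnecessary. It does hold for the paper's construction, since dilations of $\textup{Cone}(S)$ carry the identified segments of length $1$ onto those of length $\tfrac1k$. More to the point, you can simply take the sequence $(\X_1,\tfrac1k\sfd_1,x_1)$ itself. Rescaling preserves properness, geodesic completeness and the $\CAT(0)$ condition. The identity map is a homeomorphism, so each rescaled space still fails to be a homology manifold.

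This is what your route buys. The corollary follows from the existence of a single proper, geodesically complete $\CAT(0)$ space quasi-isometric to $\R^n$ that is not a homology manifold. You need neither the family nor hand-built $\varepsilon$-approximations. The paper's route is more elementary and names the limit concretely.

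One small inaccuracy, which you do not end up using: $\textup{Cone}(\partial_T\X_1)$ being a manifold does not amount to $\partial_T\X_1$ being homeomorphic to $\mathbb{S}^{n-1}$. That condition is only sufficient (cf.\ Remark \ref{obs:spaceofdirectionnothomeotosphere}).
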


\subsection{Outline of the proofs}\label{sec:outline}${}$\\
Our main contribution is in the proof of Theorems \ref{theo:intro_homology_qi_R^n}, \ref{theo:intro_2}, \ref{theo:intro_2_volume}, \ref{theo:intro_negative_answer} and Corollary \ref{cor:intro_hom_mfd_not_open}; here, we will focus on Theorem \ref{theo:intro_homology_qi_R^n} and \ref{theo:intro_2_volume} to explain the main ideas in the paper. 
\vspace{1mm}

The proof of Theorem \ref{theo:intro_homology_qi_R^n} is based on a combination of topological and geometric arguments. 
Because of classical topological results, Theorem \ref{theo:intro_homology_qi_R^n} is equivalent to the following statement: {\em every proper $\CAT(0)$ homology manifold $\X$ which is quasi-isometric to $\R^n$ is a topological $n$-manifold which is simply connected at infinity} (see \cite{Stallings1962} for $n\ge 5$, \cite{Freedman2014} for $n=4$, and \cite{Brin1989} for $n=3$).  
 So, we have to show that $\X$ is a topological $n$-manifold and   is simply connected at infinity.
 
A deep theorem of Lytchak-Nagano (cp. \cite[Theorem 1.1 and Proposition 2.3]{LN-finale-18}) says that a proper, geodesically complete, $\CAT(0)$ space is a topological $n$-manifold if and only if for every $x\in \X$, every small enough metric sphere $S_r(x)$ is homotopy equivalent to $\mathbb{S}^{n-1}$.

On the other hand, as $(\X,\sfd)$ is quasi-isometric to $\R^n$, we deduce that it has  cone at infinity  $C_\infty \X$ 
  bi-Lipschitz homeomorphic to $\R^n$  (see Proposition \ref{prop:QI_to_R^n_has_biLip_asymptotic_cones}). 
Moreover,  $C_\infty \X$ is  isometric to the Euclidean cone $\textup{Cone} (\partial_T \X)$ over the Tits boundary $\partial_T \X$ of $\X$ (\cite[Proposition 3.4]{Nagano2022}), 
and the latter is
homotopy equivalent to the standard sphere $\mathbb{S}^{n-1}$, see again Proposition \ref{prop:QI_to_R^n_has_biLip_asymptotic_cones}.
Then, a result of Nagano (\cite[Theorem 4.6]{Nagano2022}) implies that, for every fixed $x\in \X$ and every radius $R$ big enough, the metric sphere $S_R(x)$ is homotopy equivalent to $\mathbb{S}^{n-1}$; in particular $S_R(x)$ is a simply connected, homology $(n-1)$-sphere.

Therefore, large metric spheres around every point of $\X$ are homotopy equivalent to $\mathbb{S}^{n-1}$, while we would like to have that very small metric spheres  are homotopy equivalent to $\mathbb{S}^{n-1}$.   The next step is to relate the homology of large metric spheres to the homology of small ones, and this is where the assumption of having a homology manifold  plays a role. In order to do that, we use the natural contraction map from large balls  to small balls. Using an argument of Thurston (\cite[Corollary 2.10]{Thurston1996}) we can prove that the contraction maps have acyclic fibers. This is enough to apply Vietoris-Begle's Theorem as in Davis-Januszkiewicz \cite{DavisJanuszkiewicz1991} to conclude that small metric spheres have the same homology of large metric spheres, and so they are homology spheres. Moreover, the same construction gives that small metric spheres are simply connected. But a simply connected homology sphere is homotopy equivalent to $\mathbb{S}^{n-1}$, which concludes the proof of the fact that $\X$ is a topological manifold.

The simply connectedness at infinity of $\X$ essentially comes from the same argument, since large metric spheres are simply connected.

\vspace{1mm}
We remark that the above argument cannot be applied to  a proper, geodesically complete, $\CAT(0)$ space which is not a homology manifold. Indeed, the spaces constructed in Theorem \ref{theo:intro_negative_answer} have points $x_k \in \X_k$ and radii $0<r<R$ with the following properties:
\vspace{-3mm}

\begin{itemize}[leftmargin=7mm]
    \item one fiber of the contraction map $\phi^R_r \colon B_R(x_k) \to B_r(x_k)$ centered at $x_k$ is not acyclic, actually it is not connected;
    \item the metric sphere $S_R(x_k)$ is homotopy equivalent to $\mathbb{S}^{n-1}$, while  $S_r(x_k)$ is not;
    \item there exists no continuous map $\sigma\colon B_r(x_k) \to B_R(x_k)$ such that $\phi^R_r \circ \sigma = \text{id}$, i.e. there is no way to extend continuously geodesics.
\end{itemize}

\vspace{2mm}
On the other hand, the proof of Theorem \ref{theo:intro_2_volume} is based on a combination topological, geometric and analytical arguments. As discussed above, a proper, geodesically complete, $\CAT(0)$ space which is quasi-isometric to $\R^n$ has a cone at infinity isometric to the Euclidean cone over the Tits boundary $\partial_T\X$, and $\partial_T\X$ is a $(n-1)$-homology manifold homotopy equivalent to $\mathbb{S}^{n-1}$. Our goal is to show that $\X$ is a homology $n$-manifold. By a characterization proved in \cite{LN-finale-18}, see Proposition \ref{prop:characterization_hom_manifolds}, it is enough to show that each space of direction $\Sigma_x\X$, $x\in \X$, has the same homology of $\mathbb{S}^{n-1}$. For every $x\in \X$ we have the natural $1$-Lipschitz surjective map $\partial\log_x\colon \partial_T\X \to \Sigma_x\X$ that sends a point at infinity $[\mathfrak{r}]$ to the starting direction of the unique geodesic ray issuing at $x$ and pointing towards $[\mathfrak{r}]$. The conclusion then follows by the next result, applied to $\Y=\partial_T\X$, $\Y'=\Sigma_x\X$ and $f=\partial\log_x$.

\begin{theorem}
    \label{theo:intro_sphere_degree}
    Let $f\colon (\Y,\sfd) \to (\Y',\sfd')$ be a $1$-Lipschitz, surjective map between compact, geodesically complete, $\CAT(1)$ spaces of dimension $n$.
    Assume that $\Y$ is a homology manifold homotopy equivalent to $\mathbb{S}^n$, with $\mathcal{H}^n(\Y) < 2\cdot\mathcal{H}^{n}(\mathbb{S}^n)$: then,  $\Y'$ is also a homology $n$-manifold which is homotopy equivalent to $\mathbb{S}^n$.
\end{theorem}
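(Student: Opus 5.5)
We plan to prove Theorem \ref{theo:intro_sphere_degree} through a degree argument, using the volume bound to rule out that $f$ ``folds'' $\Y$ onto $\Y'$. The first step is to recall standard facts on compact, geodesically complete, $\CAT(1)$ spaces of dimension $n$ (Lytchak--Nagano): the $n$-dimensional Hausdorff measure is positive on every open set, and there is a dense open set of $n$-regular points, i.e.\ points with a neighbourhood biLipschitz to a ball in $\R^n$. For $\Y'$ we cannot yet assume it is a homology manifold, but we can use that the top-dimensional homology $H_n(\Y')$ (with $\Z$ or $\Z/2$ coefficients) is computed by $n$-cycles, which in this setting are Lipschitz currents / fundamental classes supported on $n$-dimensional pieces. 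Since $f$ is $1$-Lipschitz and surjective between purely $n$-dimensional spaces, the area formula gives, for $\mathcal{H}^n$-a.e.\ $y'\in\Y'$, a finite fiber $f^{-1}(y')$ and
\[
\int_{\Y'} \# f^{-1}(y')\, d\mathcal{H}^n(y') \le \mathcal{H}^n(\Y) < 2\,\mathcal{H}^n(\mathbb{S}^n).
\]

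The second step is the key geometric input: every compact, geodesically complete, $\CAT(1)$ space of dimension $n$ has $\mathcal{H}^n(\Y')\ge \mathcal{H}^n(\mathbb{S}^n)$, and more precisely every $n$-dimensional ``sheet'' (the support of a nontrivial top-dimensional cycle, or a geodesically complete piece) carries volume at least $\mathcal{H}^n(\mathbb{S}^n)$. This should follow by comparing, around any point, the volume growth of balls with that of the round sphere (Bishop--Gromov type inequality for $\CAT(1)$, geodesically complete spaces, via the logarithm map and the fact that geodesics extend to length $\pi$). Combined with the display, this forces $\# f^{-1}(y')=1$ for $\mathcal{H}^n$-a.e.\ $y'$, up to a set of small measure, so $f$ has ``degree one'' almost everywhere and cannot cover any region twice.

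The third step turns this into topology. Using the fundamental class $[\Y]\in H_n(\Y)\cong\Z$ (homology manifold homotopy equivalent to $\mathbb{S}^n$), we show $f_*[\Y]\neq 0$ and is a generator of local homology $H_n(\Y',\Y'\setminus\{y'\})$ at regular $y'$, using the local degree computed by the a.e.\ injective fibers. Then we want to show $f$ is a cell-like map: each fiber $f^{-1}(y')$ is acyclic. I would argue that a fiber which is not acyclic (e.g.\ disconnected) would force, by geodesic completeness of $\Y'$ and the $1$-Lipschitz property, two disjoint regions of $\Y$ each of volume comparable to a full hemisphere-type quantity to map onto a common neighborhood, contradicting the volume bound; in the limit this gives exactly the threshold $2\mathcal{H}^n(\mathbb{S}^n)$, sharp by Theorem \ref{theo:intro_negative_answer}. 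Once fibers are acyclic, Vietoris--Begle (as in the proof of Theorem \ref{theo:intro_homology_qi_R^n}) gives that $f$ induces isomorphisms in (Čech) homology, locally and globally, so $\Y'$ is a homology $n$-manifold with the homology of $\mathbb{S}^n$; simple connectivity passes through cell-like maps between ANRs, hence $\Y'\simeq\mathbb{S}^n$ by Whitehead/Hurewicz.

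The main obstacle is the third step: upgrading the measure-theoretic statement ``a.e.\ fiber is a single point'' to acyclicity of \emph{every} fiber. I would first try a local version of the volume lower bound: if $f^{-1}(y')$ splits into two components, take small balls around $y'$ and show both preimage components must, via extending geodesics from $y'$ to length $\pi$ in $\Y'$ and lifting through $f$, surject onto all of $\Y'$, giving total volume $\ge 2\mathcal{H}^n(\mathbb{S}^n)$.
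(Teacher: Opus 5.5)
There is a genuine gap: the conclusion you draw at the end of your second step is false, and so is the cell-likeness you aim for in the third. The area formula together with $\mathcal{H}^n(\Y')\ge\mathcal{H}^n(\mathbb{S}^n)$ only gives $\mathcal{H}^n(\{y' : \#f^{-1}(y')\ge 2\})<\mathcal{H}^n(\mathbb{S}^n)$. It does not give a.e.\ injectivity, and $f$ can in fact fold.

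Here is a counterexample to the fibre claims. Take $\Y=\mathbb{S}^0\ast\mathbb{S}^1_{3\pi}$, a $\CAT(1)$ topological $2$-sphere of area $6\pi<8\pi$, and $\Y'=\mathbb{S}^2=\mathbb{S}^0\ast\mathbb{S}^1_{2\pi}$. Let $h\colon\mathbb{S}^1_{3\pi}\to\mathbb{S}^1_{2\pi}$ be the $1$-Lipschitz degree-one map that runs forward along an arc of length $5\pi/2$ and then back along the last $\pi/2$, and let $f$ be its spherical suspension. Then $f$ is $1$-Lipschitz and surjective and all hypotheses hold. Yet over a set of positive measure (area $\pi$) every fibre consists of three points. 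So $f$ is not cell-like and its fibres are not acyclic, and Vietoris--Begle cannot be applied to $f$. Your local argument (each component of a disconnected fibre should surject onto $\Y'$) fails on this example too. No fibre-by-fibre argument can work: the theorem is a statement about $\Y'$, not about the fibres of $f$.

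What is missing is a proof that $\Y'$ is a homology manifold that does not pass through the fibres of $f$, followed by degree theory in place of multiplicity one.

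\emph{Homology manifold.} The paper proves this by induction on $k=n-\textup{sph}(\Y')$, where $\textup{sph}$ is the dimension of the maximal round spherical join factor. Write $\Y'\cong\mathbb{S}^m\ast{\rm Z}$ and take $z\in{\rm Z}$. The $1$-Lipschitz surjection $\Y'\to\mathbb{S}^0\ast\Sigma_z\Y'\cong\mathbb{S}^{m+1}\ast\Sigma_z{\rm Z}$ comes from $\partial\log$ in $\textup{Cone}(\Y')$. Composing it with $f$ lands in the inductive hypothesis, since the spherical factor has grown. K\"unneth for joins and the Lytchak--Nagano characterization via spaces of directions then show that $\Y'$ is a homology $n$-manifold. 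The base case $\Y'=\mathbb{S}^{n-1}\ast F$ with $\#F=3$ must be excluded separately, by a folding, degree and area argument.

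\emph{Degree argument.} Only after this does the global part work.
\begin{itemize}
\item If $\Y'$ were non-orientable, then $\textup{deg}_2(f)=0$, so a.e.\ fibre has even cardinality. This gives $\mathcal{H}^n(\Y)\ge 2\mathcal{H}^n(\Y')\ge 2\mathcal{H}^n(\mathbb{S}^n)$, a contradiction.
\item Once $\Y'$ is oriented, $\mathcal{H}^n(\Y)\ge|\textup{deg}(f)|\,\mathcal{H}^n(\Y')$ forces $|\textup{deg}(f)|=1$. Odd-multiplicity folds like the one above are compatible with this.
\item Poincar\'e duality then makes $f_\ast$ surjective in homology.
\item Lifting $f$ to the universal cover of $\Y'$ and using multiplicativity of the degree gives $\pi_1(\Y')=0$.
\end{itemize}
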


The proof of Theorem \ref{theo:intro_sphere_degree} is by induction on the maximal spherical factor that $\Y'$ splits in its unique join decomposition, see \cite[Corollary 1.2]{Lytchak2005}. Again, to prove that $\Y'$ is a homology manifold it is enough to show that $\Sigma_{y'}\Y'$ has the same homology of $\mathbb{S}^n$ for every $y' \in \Y'$. \\
For this, we define a natural $1$-Lipschitz, surjective map $g_{y'}\colon\Y' \to \Sigma_{y'}\Y' \ast \mathbb{S}^0$ in order to use the induction hypothesis on the map $g_{y'}\circ f\colon \Y \to \Sigma_{y'}\Y' \ast \mathbb{S}^0$, and conclude that $\Y'$ is a homology manifold. 
The control of the homotopy type 
of $\Y'$ is more delicate.
As recalled before, it is enough to prove that $\Y'$ is a simply connected homology sphere. In order to do this,  we relate the topological degree between oriented homology manifolds to the Jacobian of maps between rectifiable metric spaces, and then use the area  formula to infer that $\Y'$ is simply connected 
and orientable. Moreover, the map $f$ satisfies
$$2\cdot \mathcal{H}^n(\mathbb{S}^{n}) >\mathcal{H}^n(\Y) \ge \vert \textup{deg}(f)\vert \cdot \mathcal{H}^n(\Y') \ge \mathcal{H}^n(\mathbb{S}^{n}),$$
(where the last inequality is a standard property of geodesically complete, $\CAT(1)$ spaces), so applying again the  area  formula yields that $f$ must have degree one. Now, the fact that $\Y'$ has the same homology of $\mathbb{S}^n$ follows by Poincaré duality. 

\subsection{Relation to the literature}${}$\\
The proof of Theorem \ref{theo:intro_homology_qi_R^n} uses crucially the deep analysis of \cite{Thurston1996}. Recently, the same ideas have been used in \cite{LytchakNaganoStadler2024} to prove that $\CAT(0)$ topological $4$-manifolds are homeomorphic to $\R^4$. We also use intensively some ideas of \cite{LN-finale-18}, especially when we need to improve infinitesimal topological properties to global ones.
\vspace{1mm}

\noindent Theorem \ref{theo:intro_2_volume}
  might be compared with \cite[Theorem 1.2]{Nagano2022} where (building on \cite{LN19})  it is proved that if  a proper, geodesically complete CAT$(0)$-space $X$ satisfies  $$\mathcal H^{n-1} (\partial_T X) < \tfrac32 \cdot  \mathcal H^{n-1} (\mathbb S^{n-1})$$
  then $X$ is homeomorphic to $\R^n$, {\em provided   that} $(\X,\sfd)$ is purely $n$-dimensional. On the other hand, in our statement, besides the better constant $2$,    pure $n$-dimensionality is part of  the conclusion of the theorem.
The proof of this seemingly innocent conclusion requires the assumption on the volume of the Tits boundary: we do not know if the same is true  
for every proper, geodesically complete CAT$(0)$ space which is quasi-isometric to $\R^n$, cp. Remark \ref{rmk:2-dim-qi_implies_pure_dimensional}.

\vspace{1mm}
\noindent   Theorem \ref{theo:intro_2_volume} also shares similarities with \cite[Theorem 1.2]{Nag18}, where it is proved that there exists $c>3/2$ such that any $\CAT(1)$ homology $n$-manifold  $(\Y,\sfd)$ satisfying $\mathcal{H}^{n}(\Y) \le c\cdot \mathcal{H}^n(\mathbb{S}^n)$ is homeomorphic to $\mathbb{S}^n$. 



{\bf Acknowledgments.}
The authors thank A. Lytchak for many interesting discussions and N. Vikman for his help on a preliminary version of Theorem \ref{theo:intro_sphere_degree} in dimension $n=2$, which is sketched in the Appendix and that may help the reader to visualize the general case.

\vspace{2mm}
\section{Preliminaries}

Throughout this paper, we will denote the standard $n$-dimensional Euclidean space   by $\R^n$,  the round $n$-sphere  by $\mathbb{S}^n$ and the $n$-dimensional Hausdorff measure by $\mathcal{H}^n$.

Let $(\X,\sfd)$ be a metric space. The open (resp. closed) ball of center $x\in \X$ and radius $r > 0$ is denoted by $B_r(x)$ (resp. $\overline{B}_r(x)$). The metric sphere around $x\in \X$ of radius $r>0$ is denoted by $S_r(x)$.  Given $\Y\subseteq \X$ and $r>0$, we denote by $\Cov(\Y,r) \in \N \cup \{\infty\}$ the minimal number of balls of radius $r$ needed to cover $\Y$. The metric space is said to be {\em metrically doubling} if there exists $C_D \ge 1$ such that $\Cov(B_{2r}(x),r)\le C_D$ for every $x\in \X$ and every $r>0$. 
Finally, we will write $(\X,\sfd) \cong  (\X',\sfd')$ for two isometric metric spaces.

A {\em geodesic segment} is an isometric embedding $\mathfrak{g} \colon [a,b] \to \X$ for some compact interval $[a,b] \subset \R$.
A geodesic {\em ray} is an isometric embedding $\mathfrak{r}\colon [0,\infty) \to \X$, and a geodesic {\em line} is an isometric embedding  $\mathfrak{g} \colon \R \to \X$. A {\em reparametrized} geodesic is the composition of a geodesic $\mathfrak{g}$ with a monotone reparametrization of $[a,b]$.
A {\em local} geodesic is a map $\mathfrak{g}\colon [a,b]\to \X$ such that for every $t\in [a,b]$ there exists $\varepsilon > 0$ such that the restriction of $\mathfrak{g}$ to $[a,b]\cap (t-\varepsilon,t+\varepsilon)$ is a geodesic segment. 

A metric space $(\X,\sfd)$ is {\em geodesically complete} if every local geodesic can be extended, i.e. if for every local geodesic $\mathfrak{g} \colon [a,b] \to \X$
there exists $\varepsilon > 0$ and a local geodesic  $\tilde{\mathfrak{g}} \colon [a-\varepsilon,b+\varepsilon] \to \X$ such that the restriction of $\tilde{\mathfrak{g}}$ to $[a,b]$ coincides with $\mathfrak{g}$. 
A metric space $(\X,\sfd)$ is {\em $R$-geodesic}, for $R\in [0,+\infty]$, if for every $x,y\in \X$ with $\sfd(x,y)<R$ there exists a geodesic segment joining the two. If $R=\infty$ we say that $(\X,\sfd)$ is geodesic.

The \emph{Euclidean cone over $\X$} is the space $\textup{Cone}(\X) = [0,+\infty) \times \X$ modulo the equivalence relation that identifies all the points $(0,x)$ for every $x\in \X$, equipped with the metric
\begin{equation}
    \label{eq:metric_cone_definition}
    \sfd^2((t,x),(t',x')) = t^2 + t'^2 - 2tt'\cos(\min\{\pi, \sfd_\X(x,x')\}).
\end{equation}
The \emph{section} of the Euclidean cone $\textup{Cone}(\X)$ is the metric space $(\X, \min\{\sfd_\X, \pi\})$, while its \emph{vertex} $o$ is the equivalence class of the points $(0,x)$, $x\in \X$.

We will assume in the following that the reader is familiar with the concepts of Gromov-Hausdorff convergence and ultralimits. Among the many references in the literature we   mainly refer to \cite[Section 3]{Cav21ter}, where precise definitions are given and a comparison between the two convergences is made. A similar discussion,  which does not take into account group actions, can be found in \cite{Jan17} (unpublished).

\vspace{2mm}
\subsection{Geodesically complete, $\CAT(\kappa)$ spaces} ${}$\\
Let $\kappa \in \R$ and let $D_\kappa := \pi/\sqrt{\kappa}$. For basics of $\CAT(\kappa)$ spaces we refer to \cite{BH09}.  Here, we will recall some properties of proper, geodesically complete, $\CAT(\kappa)$ spaces. The main reference for this part is \cite{LN19}. We recall that a $\CAT(\kappa)$ space is $D_\kappa$-geodesic.

Let $(\X,\sfd)$ be a proper, geodesically complete, $\CAT(\kappa)$ space.   For every $x\in \X$ there exists an integer $n_x \in \N$ such that the Hausdorff dimension of every small enough ball $B_r(x)$ is $n_x$. The integer $n_x$ is called the {\em dimension of $\X$ at $x$} and it is denoted by $\dim(x;\X)$. The dimension of $\X$ is correspondingly defined as
$$\dim(\X) := \sup_{x\in \X} \dim(x;\X).$$
The space $(\X,\sfd)$ is said to be {\em pure-dimensional} if $\dim(x;\X) = \dim(\X)$ for every $x\in \X$. It is said to be purely $n$-dimensional if it is pure-dimensional and $\dim(\X) = n$.

Let $x\in \X$. The {\em space of directions} at $x$ is 
$$\Sigma_x\X := \{\mathfrak{g}\colon [0,\varepsilon] \to \X\,:\, \mathfrak{g} \text{ geodesic with } \mathfrak{g}(0)=x\}/\sim,$$
where $\mathfrak{g} \sim \mathfrak{g'}$ if and only if $\angle_x(\mathfrak{g},\mathfrak{g}') = 0$; for the definition of the {\em Alexandrov angle} between two   geodesics, we refer to \cite{BH09}. The space $\Sigma_x\X$ equipped with the distance $\angle_x(\cdot,\cdot)$ is a compact, $\CAT(1)$ space of diameter $\pi$. The Euclidean cone over the space of directions, namely $T_x\X := \textup{Cone}(\Sigma_x\X)$ is a proper $\CAT(0)$ space which is called the {\em tangent space} at $x$. It is the Gromov-Hausdorff limit of every sequence of pointed metric spaces $(\X,\lambda_i\sfd,x)$ with $\lambda_i \to \infty$.
A deep result of \cite{LN19} says that the space of directions is homotopy equivalent to small enough metric spheres:

\begin{prop}\cite[Theorem 1.12]{LN19}
\label{prop:homotopy_type_small_metric_spheres}
    Let $(\X,\sfd)$ be a proper, geodesically complete, $\CAT(\kappa)$ space. For every $x\in \X$ there exists $r_x > 0$ such that $\Sigma_x\X$ is homotopy equivalent to $S_r(x)$ for every $0<r<r_x$.
\end{prop}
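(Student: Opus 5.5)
This is Lytchak--Nagano's Theorem 1.12 in \cite{LN19}, which is quoted rather than proved here, so what follows is a sketch of how one would establish it. The idea is to compare $S_r(x)$ with $\Sigma_x\X$ through the logarithm map and to use the local contractibility of $\CAT(\kappa)$ spaces together with the controlled behaviour of geodesics near $x$.

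\textbf{Step 1: the comparison map.} Fix $x$ and take $r < D_\kappa/2$, so that geodesics from $x$ to points of $\overline{B}_r(x)$ are unique and vary continuously. Define
\[
\log_x \colon S_r(x)\to\Sigma_x\X
\]
by sending $y$ to the starting direction of $[x,y]$. By the $\CAT(\kappa)$ angle comparison, this map is continuous. Geodesic completeness makes it surjective, since every direction extends to a geodesic reaching $S_r(x)$. It is not injective in general, because geodesics may branch.

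\textbf{Step 2: a map in the other direction.} A homotopy inverse is needed, but directions cannot simply be pushed out along geodesics, as there may be no continuous choice of extension. Instead:
\begin{itemize}
\item Pick a finite $\varepsilon$-dense set of directions $v_i \in \Sigma_x\X$.
\item For each $v_i$, choose a point $y_i \in S_r(x)$ on some geodesic extension in direction $v_i$.
\item Use a partition of unity subordinate to small balls in $\Sigma_x\X$, which is a finite-dimensional compact ANR since $\Sigma_x\X$ is compact $\CAT(1)$ of finite dimension.
\item Build a map $\sigma \colon \Sigma_x\X \to S_r(x)$ by barycentric or geodesic interpolation between the $y_i$, and project radially back to $S_r(x)$ using the retraction of $\overline{B}_r(x)\setminus B_{r/2}(x)$ onto $S_r(x)$ along geodesics from $x$.
\end{itemize}
Angle comparison gives, for $r$ small, that $\log_x\circ\sigma$ is $\delta$-close to the identity. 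Because $\Sigma_x\X$ is an ANR, maps that are close are homotopic, so $\log_x\circ\sigma \simeq \mathrm{id}$.

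\textbf{Step 3: the composition $\sigma\circ\log_x$.} This is the main obstacle: showing $\sigma\circ\log_x$ is homotopic to $\mathrm{id}_{S_r(x)}$. Two points $y,y'\in S_r(x)$ with close directions need not be close in $S_r(x)$, since branching can occur at intermediate distances. The key input is that for $r<r_x$ small enough, the geometry of $\overline{B}_r(x)$ is uniformly close to that of the tangent cone $T_x\X$, at all scales below $r$ by a compactness argument. Then:
\begin{itemize}
\item Apply the "no branching up to scale" estimate from \cite{LN19}: the set of points of $S_r(x)$ with a given direction is contractible within a small neighbourhood, via contraction toward $x$ followed by re-extension.
\item Homotope $y$ to $\sigma(\log_x(y))$ by first contracting along $[x,y]$ to $S_{\rho}(x)$ with $\rho\ll r$. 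There the two points are close and can be joined by a short geodesic.
\item Re-expand radially by the geodesic retraction.
\end{itemize}
Continuity of all these moves follows from uniqueness of geodesics in $\CAT(\kappa)$ balls and the ANR property of $S_r(x)$, which holds because $S_r(x)$ is a finite-dimensional compact metric space that is locally contractible for generic small $r$.

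Finally, the choice of $r_x$ must be uniform, so that a single threshold works for every $r<r_x$. This follows because the above estimates depend only on how close the rescaled balls are to $T_x\X$, and that closeness is monotone as $r\to0$.
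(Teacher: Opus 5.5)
Your sketch has a genuine gap. It relies on pushing points \emph{outward} onto $S_r(x)$ ``along geodesics from $x$''. You use this three times: to define $\sigma$ in Step~2, to ``re-expand'' in Step~3, and to contract fibres of $\log_x$ by ``contraction followed by re-extension''. When geodesics branch, no such continuous map exists; only the inward projection $z\mapsto$ (point of $[x,z]$ at distance $\rho$ from $x$), defined on $\{\sfd_x\ge\rho\}$, is continuous. The paper exhibits exactly this phenomenon in $\X_k$, where the contraction map has no continuous section.

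The phenomenon also occurs at every small scale. Insert into $\R^2$ Euclidean sectors of angle $\varepsilon_n$, with $\sum_n\varepsilon_n<\infty$, along the vertical rays issuing from $p_n=(2^{-n},0)$. In the resulting $\CAT(0)$ surface, geodesics from $x=0$ along the horizontal axis branch at each $p_n$ over an arc of directions of width $\varepsilon_n$. Hence outward extension from points just above and just below the axis has different limits in every annulus $\overline B_r(x)\setminus B_{r/2}(x)$.

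Step~2 can be saved: interpolate on $S_r(x)$ and project \emph{inward} onto a smaller sphere. For this you must prove that points of $S_r(x)$ with close directions are $o(r)$-close, which you assert only as a vague ``no branching'' estimate. The correct statement follows from compactness of the set of geodesics of length $r_0$ issuing from $x$, together with $\sfd(\gamma(t),\gamma'(t))\le\frac{t}{r_0}\sfd(\gamma(r_0),\gamma'(r_0))$. Step~3 cannot be saved this way. The homotopy between $\textup{id}$ and $\sigma\circ\log_x$ must stay inside the sphere, and after contracting to $S_\rho(x)$ and joining by a geodesic there is no continuous way back.

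The missing idea is a \emph{scale-invariant} local contractibility of small spheres. Fibres of $\log_x|_{S_r(x)}$ can have positive (though $o(r)$) diameter for every small $r$, as in the example above. So $\sigma\circ\log_x$ is only $o(r)$-close to the identity. You therefore need that $c\,r$-close maps into $S_r(x)$ are homotopic within $S_r(x)$, with $c$ independent of $r$. The ANR property of each individual $S_r(x)$ only gives a threshold that may degenerate as $r\to 0$.

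This uniform control is what the route of \cite{LN19}, which the paper simply quotes, provides through stability of strainers, as in the proof of Proposition~\ref{prop:spheres_loc_contr_and_loc_path_connected}. Since $\sfd_o$ is a $(1,\delta)$-strainer on $T_x\X\setminus\{o\}$, $\sfd_x$ is one near $S_r(x)$ for all small $r$. The local retractions onto fibres of strainer maps (cf.\ \cite[Theorem 9.1]{LN19}) then make the rescaled spheres $\frac1r S_r(x)$ uniformly locally contractible. The rescaled spheres also converge in the Gromov--Hausdorff sense to the unit sphere of $T_x\X$, which is homeomorphic to $\Sigma_x\X$. Combining these two facts, a Petersen-type stability argument gives the homotopy equivalence: maps into $S_r(x)$ are built skeleton by skeleton over nerves of fine covers, using the uniform contractibility instead of geodesic interpolation.

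A minor point: $r_x$ comes from this convergence alone. It does not come from monotonicity in $r$ of the closeness to the tangent cone, which fails in general.
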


Throughout the paper we will be mainly interested to the case $\kappa = 0$.  However,  we will also need to consider $\CAT(1)$ spaces like the spaces of directions and the Tits boundary of a $\CAT (0)$-space. Regarding the special case $\kappa=0$, we recall that in every geodesically complete CAT$(0)$ space $\X$, for every $x \in \X$ and   $0<r\le R$, there exists a well-defined   {\em contraction map}:
\begin{equation}
    \label{eq:defin_contraction_map}
    \phi^{R,r}_x:   \overline{B}_R(x) \to \overline{B}_r(x)
\end{equation}
obtained by sending a point $y \in \overline{B}_R(x)$ to the unique point $y'$ along the geodesic $[x,y]$ satisfying  $ d(x,y') = \frac{r}{R} d(x,y)  $; we will  use the notation $\varphi^{R,r}_x$  for the restriction of the above map to the sphere $S_R(x)$.
By the geodesic completeness  of $\X$  and the  $\CAT(0)$ assumption, the map $\phi^{R,r}_x$ is surjective and  $\frac{r}{R}$-Lipschitz,  see  \cite[Proposition 5.1]{LN19} or \cite[Lemma 2.1]{CavS20}.  This implies that the {\em logarithmic map} 
$$\log_x : \X \to \Sigma_x \X$$ 
sending any $y \in \X$ to the  initial direction of the geodesic $[x,y]$, is $1$-Lipschitz and surjective, cp. \cite[Lemma 2.3]{CavS20}.

\vspace{2mm}
\subsection{Tits boundary and asymptotic cone}
\label{sec:titsboundary}${}$\\
Let $(\X,\sfd)$ be a proper, geodesically complete, $\CAT(0)$ space. Two geodesic rays $\mathfrak{r}, \mathfrak{r}' $
are said to be  equivalent if $\sup_{t\ge 0} \sfd(\mathfrak{r}(t), \mathfrak{r}'(t)) < \infty$. This is an equivalence relation on the set of geodesic rays. The resulting space is called the {\em ideal boundary} of $\X$ and it is denoted by $\partial \X$. For every $x\in \X$ and every geodesic ray $\mathfrak{r}$, there exists a unique geodesic ray $\mathfrak{r}_x$ which is asymptotic to $\mathfrak{r}$ and such that $\mathfrak{r}_x(0) = x$   (see \cite[Proposition II.8.2]{BH09}). Given two elements $[\mathfrak{r}],[\mathfrak{r}'] \in \partial \X$ and $x\in \X$ we define
$$\angle([\mathfrak{r}],\mathfrak{[r']}) := \sup_{x\in \X} \angle_x(\mathfrak{r}_x, \mathfrak{r}_x').$$
This defines a distance on $\partial\X$, called {\em angular metric}.  The Tits boundary of $\X$ is the set $\partial \X$ equipped with the length metric induced by the angular metric $\angle(\cdot,\cdot)$ and it is denoted by $\partial_T\X$. It is a $\CAT(1)$ space, see \cite[Theorem II.9.20]{BH09}). 

For every $x\in \X$, the map
\begin{equation}
    \label{eq:defin_log_from_Tits_boundary}
    \partial\log_x\colon \partial_T\X \to \Sigma_x\X
\end{equation}
assigning to each $[\mathfrak{r}] \in \partial_T\X$ the direction of the geodesic ray $\mathfrak{r}_x$ at $x$ is $1$-Lipschitz, since
$$\sfd_T([\mathfrak{r}],[\mathfrak{r}']) \ge \angle([\mathfrak{r}],[\mathfrak{r}']) \ge \angle_x(\mathfrak{r}_x, \mathfrak{r}_x') = \angle_x(\partial\log_x([\mathfrak{r}]),\partial\log_x([\mathfrak{r}'])).$$
and surjective because of geodesic completeness.

\vspace{2mm}

Following \cite{Nagano2022}, we will say that $\X$ {\em admits the cone at infinity} if there exists a pointed metric space $(C_\infty \X, \sfd_\infty, x_\infty)$ such that for every sequence $\lambda_i \to 0$ and every  $x\in \X$, the sequence of pointed metric spaces $(\X, \lambda_i\sfd, x)$ converges in the pointed Gromov-Hausdorff sense to $(C_\infty \X,\sfd_\infty,x_\infty)$; then,  $C_\infty \X$ is called the {\em cone at infinity} of $\X$. 
Notice that if the convergence is true for every sequence $\lambda_i$ going to $0$ and some fixed point $x\in \X$, then it is also true for every sequence and every other point of $\X$. Moreover, if $C_\infty \X$ exists, then it is proper, by definition of pointed Gromov-Hausdorff convergence; so, if it exists, it is a proper, geodesically complete, $\CAT(0)$ space. 

The existence of the cone at infinity is equivalent to several natural asymptotic conditions and bounds on the growth of $\X$, as we now recall. 
\begin{prop}
	\label{prop:properties_asymptotic_cone}
	Let $(\X,\sfd)$ be a proper, geodesically complete, $\CAT(0)$ space. The following properties are equivalent:
	\begin{itemize}
		\item[(i)]  $(\X,\sfd)$ admits the cone at infinity;
		\item[(ii)]  $\omega$\textup{-}$\lim(\X,\lambda_i\sfd,x)$ is proper for some $x\in \X$, some sequence $\lambda_i \to 0$ and some non-principal ultrafilter $\omega$;
		\item[(iii)] the Tits boundary $\partial_T\X$ is compact;
		\item[(iv)] $(\X,\sfd)$ is metrically doubling.
	\end{itemize}
	If any of these conditions is satisfied, then $\omega$\textup{-}$\lim(\X,\lambda_i\sfd,x)$ is isometric to $(C_\infty \X, \sfd_\infty, x_\infty)$ for every choice of $x\in \X$, $\lambda_i \to 0$ and non-principal ultrafilter $\omega$; moreover, $C_\infty \X$ is isometric to the Euclidean cone $\textup{Cone}(\partial_T\X)$ over $\partial_T\X$, and $\dim(C_\infty\X) = \dim(\X)$. 

    \vspace{1mm}
    \noindent Furthermore, if $\X$ is purely $n$-dimensional, conditions {\em (i)-(iv)} are also equivalent to:
	\begin{itemize}
		\item[(v)] the $n$-dimensional Hausdorff measure $\mathcal{H}^n$ is doubling on $(\X,\sfd)$. 
	\end{itemize}
\end{prop}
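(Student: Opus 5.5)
The plan is to prove the cycle of implications (i)$\Rightarrow$(ii)$\Rightarrow$(iii)$\Rightarrow$(iv)$\Rightarrow$(i), and then to handle (v) separately under pure dimensionality. The standing tool is the description of asymptotic limits of $\CAT(0)$ spaces. For any $x$, $\lambda_i\to 0$ and ultrafilter $\omega$, the ultralimit $\X_\omega:=\omega$-$\lim(\X,\lambda_i\sfd,x)$ is a complete $\CAT(0)$ space. Rays from $x$ give rays from the basepoint $x_\omega$, because the limit of the rescaled ray $\mathfrak{r}_x$ is a ray. Moreover, the angle at $x_\omega$ between the limits of $\mathfrak{r}_x,\mathfrak{r}'_x$ equals the Tits angle $\angle([\mathfrak{r}],[\mathfrak{r}'])$. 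This holds because $\angle = \lim_{t\to\infty} 2\arcsin(\sfd(\mathfrak{r}_x(t),\mathfrak{r}'_x(t))/2t)$ (Bridson--Haefliger II.9.8), and this quantity survives rescaling. Thus $\textup{Cone}(\partial_T\X)$ embeds isometrically in $\X_\omega$ as the subcone spanned by limits of rays from $x$.

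(i)$\Rightarrow$(ii) is immediate, since the Gromov--Hausdorff limit is proper and coincides with the ultralimit. For (ii)$\Rightarrow$(iii), the isometric embedding above puts $\partial\X$ with the angle metric isometrically onto the unit sphere around $x_\omega$ in the subcone; this sphere is compact when $\X_\omega$ is proper. Since $\angle$ and $\sfd_T$ agree below $\pi$, and compactness for $\angle$ together with the length structure gives total boundedness for $\sfd_T$, $\partial_T\X$ is compact. For (iii)$\Rightarrow$(iv), I would use geodesic completeness. Every point of $S_R(x)$ lies on a ray from $x$ extending $[x,y]$, a property that holds in proper geodesically complete $\CAT(0)$ spaces by Arzelà–Ascoli. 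Cover $\partial_T\X$ by $N(\varepsilon)$ Tits balls of radius $\varepsilon$. Points of $\overline{B}_{2r}(p)$ far from $p$ relative to $r$ are then controlled by comparing ray directions. Near-scale control should come from local doubling of geodesically complete $\CAT(\kappa)$ spaces on bounded sets (LN19, compactness gives uniform constants on compact sets). The main obstacle is uniformity in both $p$ and $r$. I would run a contradiction argument: given balls $B_{2r_i}(p_i)$ with $\Cov\to\infty$, rescale by $1/r_i$. If $\sfd(p_i,x)/r_i$ stays bounded, the ultralimit is a subset of $\X_\omega$; otherwise it is a space splitting off directions from a horoball-type limit. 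In both cases the limit is shown proper using the compactness of $\partial_T\X$, since the limit spaces of directions at infinity are quotients of $\partial_T\X$ under 1-Lipschitz maps. This contradicts unbounded covering numbers, because a proper limit yields a uniform finite net.

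For (iv)$\Rightarrow$(i), Gromov's compactness theorem yields precompactness of $(\X,\lambda\sfd,x)$, since doubling is scale invariant. Every limit is proper and equal to the ultralimit, which is $\CAT(0)$ and geodesically complete. I would show that every ray from $x_\omega$ is a limit of rays from $x$, using the contraction maps $\phi^{R,r}_x$ together with geodesic completeness. This gives $\X_\omega=\textup{Cone}(\partial_T\X)$, independent of $\omega$ and $\lambda_i$, hence the convergence. The dimension equality $\dim C_\infty\X=\dim\X$ should follow from LN19, since dimension equals the doubling exponent of $\mathcal{H}$, via semicontinuity of dimension under convergence and the existence of $n$-dimensional tangent cones at large scale. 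Finally, (v)$\Leftrightarrow$(iv) in the pure case uses LN19's bounds $c\,r^n\le\mathcal{H}^n(B_r(p))$ for $r$ below a local scale, combined with packing arguments. Measure doubling implies metric doubling by the standard packing bound. The converse follows because metric doubling plus the uniform lower volume bound for small balls, obtained by rescaling, gives comparable volumes.
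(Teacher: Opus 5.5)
\textbf{Overall.} Your cycle (i)$\Rightarrow$(ii)$\Rightarrow$(iii)$\Rightarrow$(iv)$\Rightarrow$(i) is a legitimate alternative to the paper's organization. The paper takes (i)$\Leftrightarrow$(ii)$\Leftrightarrow$(iii) from \cite[Proposition 3.4]{Nagano2022} and gets (iv)$\Rightarrow$(ii) from Gromov precompactness. It proves (ii)$\Rightarrow$(iv) by contraposition: $r\mapsto\Cov(B_{2r}(p),r)$ is nondecreasing because the contraction maps are surjective and $\frac rR$-Lipschitz. This lets bad balls be pushed to scales comparable with $\sfd(p,x)$, where they produce a non-proper asymptotic cone. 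Your (i)$\Rightarrow$(ii) and (ii)$\Rightarrow$(iii) are fine.

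\textbf{The gap is (iii)$\Rightarrow$(iv).} Your contradiction argument does not handle the hard case. When $p_i\to\infty$ with $r_i$ bounded, or more generally $\sfd(p_i,x)/r_i\to\infty$, neither local doubling on compact sets nor the asymptotic cone at $x$ gives any information. The statements that the limit is ``a space splitting off directions from a horoball-type limit'' whose directions at infinity are ``quotients of $\partial_T\X$'' are assertions, not arguments. Even granting a compact Tits boundary for that limit, you would still need to explain why this forces properness. In the bounded case, ``the ultralimit is a subset of $\X_\omega$'' only helps once you know that (iii) makes $\X_\omega$ proper. You address that only later, in (iv)$\Rightarrow$(i), and attribute it to the contraction maps, which are not the mechanism.

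\textbf{How to close it.} The uniformity problem comes from comparing ray directions from the fixed basepoint $x$; use rays from $p$ itself instead. The angle metric on $\partial\X$ is independent of the basepoint. For rays $c,c'$ from a common point $p$, the comparison angle $\bar\angle_p(c(t),c'(t))$ is nondecreasing in $t$ with limit $\angle([c],[c'])$ (\cite[Propositions II.3.1 and II.9.8]{BH09}). Hence $\sfd(c(t),c'(t))\le 2t\sin(\angle([c],[c'])/2)$ for all $t\ge 0$. Since $\angle\le\sfd_T$, (iii) gives a finite $\frac18$-net $\xi^1,\dots,\xi^N$ of $(\partial\X,\angle)$. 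Let $\xi^j_p$ be the ray from $p$ in the class $\xi^j$. Every $q\in B_{2r}(p)$ lies on a ray $c$ from $p$, by geodesic completeness. With $s=\sfd(p,q)$ and $\angle([c],\xi^j)<\frac18$ we get $\sfd(q,\xi^j_p(s))<\frac r4$. Therefore $B_{2r}(p)$ is covered by the $5N$ balls $B_r(\xi^j_p(kr/2))$, $k=0,\dots,4$. This is uniform in $p$ and $r$, and needs neither local doubling nor ultralimits. The same estimate is what shows, in (iv)$\Rightarrow$(i), that every point of $\X_\omega$ lies in the subcone spanned by fixed rays, once compactness of $(\partial\X,\angle)$ is available.

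\textbf{Secondary points.} In (v), metric doubling plus the lower bound $\mathcal{H}^n(B_r)\ge c\,r^n$ does not give measure doubling. You also need a uniform upper bound $\mathcal{H}^n(B_r(q))\le C r^n$, and a covering count does not give it when the doubling constant exceeds $2^n$; the paper cites \cite[Corollary 5.5]{CavS20} here. Your justification of $\dim(C_\infty\X)=\dim(\X)$ is only a gesture, and ``dimension equals the doubling exponent'' is not correct as stated.
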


The fact that $C_\infty \X$ is isometric to   $\textup{Cone}(\partial_T\X)$ justifies the name {\em cone at infinity}.

\begin{proof}
	Recalling the relation  between ultralimits and Gromov-Hausdorff convergence proved in \cite[Section 3]{Cav21ter}, the equivalences (i)-(ii)-(iii) simply follow from \cite[Proposition 3.4]{Nagano2022}. The fourth equivalence is also proved in \cite[Proposition 3.7]{Nagano2022}, under the assumption that $(\X,\sfd)$ has pure dimension, which is unnecessary. Actually, if $(\X,\sfd)$ is metrically doubling then  $\omega$-$\lim (\X,\lambda_i\sfd,x)$ is proper for every $x\in \X$, every $\lambda_i \to 0$ and every non-principal ultrafilter $\omega$,  by \cite[Proposition 3.13]{Cav21ter}, because it satisfies the assumptions of Gromov's precompactness Theorem. Now, suppose that $(\X,\sfd)$ is not metrically doubling. Therefore, for every $n\in \N$ there exists a point $x_n\in \X$ and a radius $r_n > 0$ such that $\Cov(B_{2r_n}(x_n),r_n) > n$. 
    We fix $x\in \X$. By \cite[Proof of Lemma 4.5]{CavS20} we can suppose without loss of generality that $r_n\ge 1$, so that $\sfd(x_n,x) =: d_n \to \infty$. We set $\lambda_n := d_n^{-1}$ and we observe that $\Cov(B_{2d_n}(x_n),d_n) > n$ again by \cite[Proof of Lemma 4.5]{CavS20}. It is now easy to see that the space $\omega$-$\lim (\X,\lambda_n\sfd,x)$ is not proper.
    
    If the conditions (i)-(iv) hold then $C_\infty\X = \textup{Cone}(\partial_T\X)$ by \cite[Proposition 3.4]{Nagano2022}.
    The equality $\textup{dim}(C_\infty\X) = \textup{dim}(\X)$ comes from \cite[Proposition 6.5]{CavS20} noticing that $\textup{dim}(\X)$ is finite since $\X$ is metrically doubling, by \cite[Theorem 4.9]{CavS20}.
    
    Finally, suppose that $(\X,\sfd)$ is purely $n$-dimensional. Then $(\X,\sfd)$ is metrically doubling if and only if $\mathcal{H}^n$ is doubling by \cite[Corollary 5.5]{CavS20}.
\end{proof}

\vspace{2mm}
\subsection{$\CAT(1)$ spherical joins} \label{join}
${}$\\
    Let $(\Y,\sfd)$, $(\Y',\sfd')$ be two metric spaces. The \emph{spherical join} $\Y \ast \Y'$ is the set $[0,\pi/2] \times \Y \times \Y'$ modulo the equivalence relation which identifies $(\vartheta, y,y')$ and $(\varphi,z,z')$ if ($\vartheta=\varphi = 0$ and $y=z$) or ($\vartheta = \varphi = \pi/2$ and $y'=z'$). The equivalence class of $(\vartheta,y,y')$ will be denoted by $y\cos \vartheta + y'\sin \vartheta$ and we identify $Y, Y'$ to the subsets of $\Y \ast \Y'$ corresponding respectively to the classes of $[0,y, \cdot]$ and $[\frac{\pi}{2}, \cdot, y']$.
We equip $\Y \ast \Y'$ with the metric $\sfd_\ast$ which is at most $\pi$ and that satisfies, for given $x_1=y\cos \vartheta + y'\sin \vartheta$, $x_2= z\cos \varphi + z'\sin \varphi$,
$$\cos(\sfd_{\ast}(x_1,x_2)) = \cos\vartheta \cos \varphi \cos(\min\{\pi, \sfd_\Y(y,z)\}) + \sin\vartheta \sin \varphi \cos(\min\{\pi, \sfd_{\Y'}(y',z')\}).$$
Spherical joins are related to Euclidean cones in the following way.
\begin{lemma}[{\cite[Proposition I.5.15]{BH09}}]
\label{lemma:cones_and_joins}
    Let $(\Y,\sfd_\Y), (\Y',\sfd_{\Y'})$ be two metric spaces. \\ Then $\textup{Cone}(\Y \ast \Y')\cong \textup{Cone}(\Y) \times \textup{Cone}(\Y')$.
\end{lemma}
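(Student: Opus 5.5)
The plan is to write down an explicit isometry between $\textup{Cone}(\Y \ast \Y')$ and the product $\textup{Cone}(\Y) \times \textup{Cone}(\Y')$ (with the $\ell^2$ product metric) and to verify directly that it preserves distances, using the defining formulas of the cone metric and the join metric.

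\emph{Definition of the map.} A point of $\textup{Cone}(\Y\ast\Y')$ is written $(t, y\cos\vartheta + y'\sin\vartheta)$ with $t\ge 0$. Define
$$\Phi(t, y\cos\vartheta + y'\sin\vartheta) := \big((t\cos\vartheta, y), (t\sin\vartheta, y')\big) \in \textup{Cone}(\Y)\times\textup{Cone}(\Y').$$
I will first check that $\Phi$ is well defined on equivalence classes. If $\vartheta=0$, the $y'$ coordinate is irrelevant, and indeed the second component is the vertex $(0,y')\sim o'$. The case $\vartheta=\pi/2$ is symmetric. If $t=0$, both components are vertices.

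\emph{Bijectivity.} The inverse map sends $((s,y),(s',y'))$ to $t=\sqrt{s^2+s'^2}$ and $\vartheta = \arg(s,s')\in[0,\pi/2]$. This is well defined up to the same identifications, so $\Phi$ is a bijection.

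\emph{Isometry.} Take two points $p_1=(t,x_1)$ and $p_2=(t',x_2)$ with $x_1 = y\cos\vartheta+y'\sin\vartheta$ and $x_2 = z\cos\varphi + z'\sin\varphi$. The squared product distance between $\Phi(p_1)$ and $\Phi(p_2)$ is
$$t^2\cos^2\vartheta + t'^2\cos^2\varphi - 2tt'\cos\vartheta\cos\varphi\cos(\bar\sfd_\Y(y,z)) + t^2\sin^2\vartheta + t'^2\sin^2\varphi - 2tt'\sin\vartheta\sin\varphi\cos(\bar\sfd_{\Y'}(y',z')),$$
where $\bar\sfd=\min\{\pi,\sfd\}$. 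Using $\cos^2+\sin^2=1$ and the join formula, this equals $t^2+t'^2-2tt'\cos(\sfd_\ast(x_1,x_2))$.

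Since $\sfd_\ast\le\pi$, we have $\min\{\pi,\sfd_\ast\}=\sfd_\ast$, so this last expression is exactly the squared cone distance by \eqref{eq:metric_cone_definition}. Hence $\Phi$ is an isometry.

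The only delicate point is bookkeeping at degenerate points (the vertex, and $\vartheta\in\{0,\pi/2\}$). There the formulas still apply, since the terms multiplied by vanishing cosines or sines drop out. The truncation by $\pi$ is handled consistently because the same $\min\{\pi,\cdot\}$ appears both in the join metric and in the cone metric.
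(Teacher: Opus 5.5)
Your proof is correct. The explicit map $(t, y\cos\vartheta + y'\sin\vartheta)\mapsto \big((t\cos\vartheta,y),(t\sin\vartheta,y')\big)$, followed by expanding the $\ell^2$ product distance and using $\sfd_\ast\le\pi$, is exactly the standard argument of \cite[Proposition I.5.15]{BH09}, which the paper cites rather than reproving.
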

In particular, we have $\mathbb{S}^n \ast \mathbb{S}^m \cong \mathbb{S}^{n+m+1}$ for every $n,m\in \N$.

In the sequel we will need some basic facts about the geometry of spherical joins and Euclidean cones.
Recall that a Euclidean cone is $\CAT(0)$ if and only if its section is $\CAT(1)$, see \cite[Proposition II.3.14]{BH09}, and it is geodesically complete if and only if its section is geodesically complete by \cite[Proposition I.5.10]{BH09}.



%

\begin{lemma}
\label{lemma:tangent_to_section_of_cone}
    Let $\X = \textup{Cone}(\Y)$ be the Euclidean cone over a  $\CAT(1)$  space $(\Y,\sfd_\Y)$. For every $y\in \Y$ we have $T_{(1,y)}\X \cong \R\times T_y\Y$ and   $\Sigma_{(1,y)}\X \cong \mathbb{S}^0 \ast \Sigma_y\Y$.
\end{lemma}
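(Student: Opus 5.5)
The plan is to realize the cone over $\Y$ as a product, so that the point $(1,y)$ becomes a point of the form (line coordinate, cone point). Write $\X = \textup{Cone}(\Y)$ with vertex $o$, and fix $y\in\Y$.

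\emph{Step 1: the tangent cone is a blow-up.} The tangent space $T_{(1,y)}\X$ is the pointed Gromov-Hausdorff limit of $(\X,\lambda\sfd,(1,y))$ as $\lambda\to\infty$. This follows either from the general statement recalled above, or directly from the definition via spaces of directions. Since $\Y$ is $\CAT(1)$, the cone $\X$ is $\CAT(0)$, so this description applies.

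\emph{Step 2: local product structure near $(1,y)$.} Take $r<\min\{\pi/2, r_0\}$, where $r_0$ is the convexity radius at $y$ in $\Y$. For $(t,z)$ with $\sfd_\Y(z,y)<r$ and $t$ close to $1$, introduce coordinates $(t,z)\mapsto (t-1, z)$. Inside the ball $B_r(y)\subset\Y$ the cone metric is the cone metric over a small ball. The key observation is that $(\X,\lambda\sfd,(1,y))$ restricted to $\{(t,z): |t-1|<\delta, z\in B_\delta(y)\}$ is, after rescaling, close to the product $\R\times(\Y,\lambda\sfd_\Y,y)$. Concretely, I would expand
\[
\sfd^2=(t-t')^2+2tt'(1-\cos\sfd_\Y(z,z')) = (t-t')^2 + tt'\,\sfd_\Y(z,z')^2\,(1+O(\sfd_\Y^2)).
\]
Multiplying by $\lambda^2$ and letting $\lambda\to\infty$ with $t,t'=1+O(1/\lambda)$ and $\sfd_\Y=O(1/\lambda)$ shows that the rescaled distances converge uniformly on rescaled bounded sets to $\sqrt{a^2+ b^2}$. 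Here $a$ is the limit of $\lambda(t-t')$ and $b$ is the limit of $\lambda\sfd_\Y(z,z')$. Hence the pointed limit is $\R\times\lim(\Y,\lambda\sfd_\Y,y)=\R\times T_y\Y$, which gives $T_{(1,y)}\X\cong\R\times T_y\Y$.

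The main technical point is making this Gromov-Hausdorff approximation honest: bounded rescaled balls around $(1,y)$ must be contained in the coordinate region, and the error terms must vanish uniformly. Both follow from $\sfd((t,z),(1,y))\ge \max\{|t-1|,\ c\,\sfd_\Y(z,y)\}$ for $\sfd_\Y(z,y)\le\pi/2$ and $t\ge 1/2$.

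\emph{Step 3: passing to spaces of directions.} By definition $T_y\Y=\textup{Cone}(\Sigma_y\Y)$, and $\R=\textup{Cone}(\mathbb{S}^0)$. Lemma~\ref{lemma:cones_and_joins} then gives
\[
T_{(1,y)}\X\cong\textup{Cone}(\mathbb{S}^0)\times\textup{Cone}(\Sigma_y\Y)\cong\textup{Cone}(\mathbb{S}^0\ast\Sigma_y\Y).
\]
Since also $T_{(1,y)}\X=\textup{Cone}(\Sigma_{(1,y)}\X)$, it remains to recover the section from the cone. The section of a Euclidean cone is isometric to the unit sphere around the vertex, equipped with the truncated angle metric. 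So we need the isometry of cones to send vertex to vertex.

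\emph{Step 4: vertex to vertex.} For this I would not argue abstractly. Instead, I would note that the limit construction in Step 2 sends the base point to the base point, and the base point is the vertex on both sides. Both sections have diameter at most $\pi$ with metric $\min\{\cdot,\pi\}$, so they coincide. This yields $\Sigma_{(1,y)}\X\cong\mathbb{S}^0\ast\Sigma_y\Y$.
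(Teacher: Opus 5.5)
Your metric estimate in Step 2 is correct. Step 4 is also sound: pointed limits do give a vertex-preserving isometry, which is what you need to pass from $T$ to $\Sigma$.

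\textbf{The gap.} The gap is in Step 1, and in the identification $\lim(\Y,\lambda\sfd_\Y,y)=T_y\Y$ at the end of Step 2. That $\textup{Cone}(\Sigma_x)$ is the pointed Gromov--Hausdorff limit of the blow-ups at $x$ is a theorem about \emph{proper, geodesically complete} $\CAT(\kappa)$ spaces; this is the setting of the statement recalled in the preliminaries. It does not follow from $\X$ being $\CAT(0)$, nor ``directly from the definition''. The lemma, however, allows an arbitrary $\CAT(1)$ space $\Y$, and then blow-ups can be strictly larger than the tangent cone.

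For example, let $\Y$ be the compact $\R$-tree obtained from $[-1,1]$ by attaching, for every $j\ge 1$, a segment of length $2^{-j}$ at the point $2^{-j}$, and let $y=0$. Then $\Sigma_0\Y\cong\mathbb{S}^0$ and $T_0\Y\cong\R$. But $(\Y,2^i\sfd_\Y,0)$ converges to the tree obtained from $\R$ by attaching a segment of length $2^k$ at $2^k$ for every $k\in\Z$. Your Step 2 then correctly shows that the blow-ups of $\X$ at $(1,0)$ converge to $\R$ times this tree. That is not $T_{(1,0)}\X\cong\R^2$, so here your argument says nothing about tangent cones, even though the lemma is true.

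\textbf{How to repair it, and the paper's route.} As written, your proof establishes the lemma only under extra hypotheses, for instance $\Y$ compact and geodesically complete, so that $\X$ is proper and geodesically complete. This does cover the uses in Lemma~\ref{lemma:space_of_directions_to_spherical_join} and in the induction of Theorem~\ref{theo:intro_sphere_degree}, where all spaces are compact and geodesically complete. You should state these hypotheses explicitly.

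The paper instead argues infinitesimally and needs none of this:
\begin{itemize}
\item Geodesics from $(1,y)$ in $\textup{Cone}(\Y)$ and in $\R\times\Y$ are reparametrizations of curves $(t(s),y(s))$ with $y(s)$ a geodesic of $\Y$.
\item The same expansion of the cone metric you used shows that Alexandrov angles between such germs agree for the two metrics.
\item Hence $\Sigma_{(1,y)}\X\cong\Sigma_{(1,y)}(\R\times\Y)\cong\mathbb{S}^0\ast\Sigma_y\Y$, by the standard description of directions in a product.
\item $T_{(1,y)}\X$ then follows by coning.
\end{itemize}
So the paper gets $\Sigma$ first, for every $\CAT(1)$ space $\Y$, with no vertex issue. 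Your route gets $T$ first from a clean uniform estimate, but it depends on the Lytchak--Nagano blow-up theorem and inherits its hypotheses.
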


\begin{proof}
First, $T_{(1,y)}  \textup{Cone}(\Y)  =  T_{(1,y)} \left( \R   \times \! \Y \right)$, for $\R \times \! \Y $ endowed with the product metric $\sfd_\times$. \\
In fact,   any geodesic   from     $x=(1,y)$ in  $\textup{Cone}(\Y)\!\setminus\! \{o\}$ or in $ \R \times \! \Y  $   can be reparameterized as 
 $(t(s),y(s))$  where  $y(s)$ is a geodesic on $\Y$ (cp. \cite[Prop. I.5.3 and I.5.10]{BH09}) and $t(0)= 1$.
   Now, for any germ of curve $\gamma (s) = (t(s),y(s))$  we have, with respect to the cone metric, 
$$\sfd (\gamma (s), x) = 1+t(s)^2 -2t(s) \cos \sfd_\Y (y(s),y)
= 1+t(s)^2 -2t(s) +t(s)  \sfd_\Y (y(s),y)^2 + t(s) O(s^4)$$
which,  for $s \to 0$,  behaves like  
$\sfd_\times (\gamma (s), x) = (t(s) -1)^2 + \sfd_\Y (y(s),y)^2$,
by definition of product metric $\sfd_\times $.  
Then,  for any two such curves $\gamma $ and $ \gamma' $ from $x$, their (upper) Alexandrov angle 
$\angle_x(\gamma,\gamma') $   is the same when measured in $(\X,\sfd) $ or in $(\R \times  \Y , \sfd_\times )$.
This implies that  $\Sigma_{(1,y)} \textup{Cone}(\Y) =  \Sigma_{(1,y)} \left( \R   \times  \Y \right)$, hence that  
 $T_{(1,y)}  \textup{Cone}(\Y)  =  T_{(1,y)} \left( \R   \times  \Y \right)$.\\
Moreover, it is well-known that,  for any metric  product  $\Y' \times \! \Y$, we have
$$\Sigma_{(y',y)} (\Y' \times  \Y) 
\cong \Sigma_{y'} \Y'  \ast   \Sigma_y \Y 
\hspace{2mm} \mbox{ and }   \hspace{2mm} 
T_{(y',y)} (\Y' \times  \Y) \cong T_{y'} \Y'  \times   T_y \Y $$  
therefore $\Sigma_{(1,y)}   \textup{Cone}(\Y)  \cong \Sigma_{1} \R  \ast   \Sigma_y \Y \cong \mathbb{S}^0\ast \Sigma_y\Y $, and in turns $T_{(1,y)}\X \cong \R\times T_y\Y$. 
\end{proof}

A consequence of the above lemmas is the computation of some spaces of directions of a spherical join.

\begin{lemma}
\label{lemma:space_of_directions_to_spherical_join}
    Let $(\Y,\sfd)$, $(\Y',\sfd')$ be two compact, geodesically complete, $\CAT(1)$ spaces and consider the spherical join $\Y\ast \Y'$. For every $y\in\Y$ it holds $\Sigma_{y}(\Y\ast\Y')\cong \Sigma_y\Y \ast \Y'$.
\end{lemma}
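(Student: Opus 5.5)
The plan is to reduce everything to Euclidean cones, using Lemma \ref{lemma:cones_and_joins} and Lemma \ref{lemma:tangent_to_section_of_cone}. Set $Z := \Y \ast \Y'$. By Lemma \ref{lemma:cones_and_joins}, $\textup{Cone}(Z) \cong \textup{Cone}(\Y)\times \textup{Cone}(\Y')$. Under this isometry, the point $(1,y)\in \textup{Cone}(Z)$ corresponds to the pair $((1,y),o')$, where $o'$ is the vertex of $\textup{Cone}(\Y')$. This holds because $y \in \Y\subset Z$ is the class $y\cos 0 + y'\sin 0$, and the identification sends $t(y\cos\vartheta + y'\sin\vartheta)$ to $((t\cos\vartheta, y),(t\sin\vartheta,y'))$.

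The idea is to compute the space of directions $\Sigma_{(1,y)}\textup{Cone}(Z)$ in two ways.

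\textbf{First way.} Since $\Y,\Y'$ are compact, geodesically complete and $\CAT(1)$, so is $Z$. Hence $\textup{Cone}(Z)$ is $\CAT(0)$, and Lemma \ref{lemma:tangent_to_section_of_cone} applies:
$$\Sigma_{(1,y)}\textup{Cone}(Z)\cong \mathbb{S}^0\ast \Sigma_y Z.$$

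\textbf{Second way.} Use the product formula $\Sigma_{(p,q)}(A\times B)\cong \Sigma_pA\ast\Sigma_qB$, recalled in the proof of Lemma \ref{lemma:tangent_to_section_of_cone}. Note that $\Sigma_{o'}\textup{Cone}(\Y')\cong \Y'$, since the section metric is $\min\{\sfd',\pi\}$ and $\Y'$ is $\CAT(1)$ of diameter at most $\pi$ when geodesically complete; alternatively, accept the truncated metric. Applying Lemma \ref{lemma:tangent_to_section_of_cone} to $\textup{Cone}(\Y)$ at $(1,y)$ and using associativity and commutativity of the spherical join, we get
$$\Sigma_{(1,y)}\textup{Cone}(Z)\cong(\mathbb{S}^0\ast\Sigma_y\Y)\ast\Y' \cong \mathbb{S}^0\ast(\Sigma_y\Y\ast\Y').$$

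\textbf{Cancelling the $\mathbb{S}^0$ factor.} The two computations give $\mathbb{S}^0\ast\Sigma_yZ\cong\mathbb{S}^0\ast(\Sigma_y\Y\ast\Y')$, and the remaining task is to remove $\mathbb{S}^0$. I expect this cancellation to be the main obstacle, because cancellation in joins is not automatic.

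To get around it, I would avoid cancellation and instead track the isometry explicitly. In both descriptions, the $\mathbb{S}^0$ factor consists of the two directions of the radial geodesic $t\mapsto (t,y)$ through $(1,y)$. The subspaces $\Sigma_yZ$ and $\Sigma_y\Y\ast\Y'$ are then both exactly the set of directions at angle $\pi/2$ from these two radial directions. Since the isometry built from the identification in Lemma \ref{lemma:cones_and_joins} is the identity on $\textup{Cone}(Z)$ and preserves the radial geodesic, it maps one set onto the other isometrically. The induced metric on this set is the intrinsic join metric in each case, since the equator of $\mathbb{S}^0\ast W$ is isometric to $W$.

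A cleaner alternative would be a direct argument. One checks, via the explicit formula for $\cos\sfd_\ast$, that near $y$ the join $Z$ looks like $\Y$ "times" the cone over $\Y'$. Concretely, the germs of geodesics from $y$ are pairs (a direction in $\Sigma_y\Y$, a point $y'\in\Y'$, an angle $\vartheta$). Computing angles through the first-order expansion of $\cos\sfd_\ast$, as in the proof of Lemma \ref{lemma:tangent_to_section_of_cone}, then gives the join metric on $\Sigma_y\Y\ast\Y'$. I would present the cone/product argument and mention this direct verification as the fallback.
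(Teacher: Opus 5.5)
Your proposal is correct and follows essentially the paper's route. Both arguments compute the geometry at the point $(1,y)$ of $\textup{Cone}(\Y\ast\Y')\cong\textup{Cone}(\Y)\times\textup{Cone}(\Y')$ in two ways via Lemmas \ref{lemma:cones_and_joins} and \ref{lemma:tangent_to_section_of_cone}. The paper works with tangent cones, cancels the $\R$-factor (citing Foertsch--Lytchak) and then passes from isometric cones to isometric sections. You work directly with spaces of directions and cancel the $\mathbb{S}^0$ by hand: you identify both $\Sigma_y(\Y\ast\Y')$ and $\Sigma_y\Y\ast\Y'$ with the set of directions at angle $\pi/2$ from the two radial directions, which is a rigorous and slightly more self-contained version of the same cancellation.

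One small slip: a geodesically complete $\CAT(1)$ space need not have diameter at most $\pi$ (a circle of length $3\pi$ is a counterexample). This does not affect the argument, since the join only depends on the truncated metric $\min\{\sfd',\pi\}$, as your fallback remark already notes.
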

\begin{proof}
    The point $y\in \Y$ can be seen as the point $(0,y,\cdot) \in \Y\ast\Y'$. Applying several times Lemma \ref{lemma:cones_and_joins} and Lemma \ref{lemma:tangent_to_section_of_cone} we obtain that
    \begin{equation*}
    \begin{aligned}
        \R \times \textup{Cone}(\Sigma_y(\Y\ast \Y')) \cong \R \times T_y(\Y\ast \Y') &\cong T_{(1,y)}\textup{Cone}(\Y\ast \Y') \\
        &\cong T_{(1,y)}\textup{Cone}(\Y) \times T_{o'}\textup{Cone}(\Y') \\
        &\cong \R \times \textup{Cone}(\Sigma_y\Y) \times \textup{Cone}(\Y'),
    \end{aligned}
    \end{equation*}
    where $o'\in \textup{Cone}(\Y')$ denotes the vertex. Hence,
    $$\textup{Cone}(\Sigma_y(\Y\ast \Y')) \cong \textup{Cone}(\Sigma_y\Y) \times \textup{Cone}(\Y')\cong\textup{Cone}(\Sigma_y\Y\ast \Y')$$
since the $\R$-factor clearly is sent into the $\R$-factor by the above isometries (see also \cite[Theorem 1.1]{FoertschLytchak2008}) and by Lemma \ref{lemma:cones_and_joins}. We conclude that $\Sigma_y(\Y\ast\Y') \cong \Sigma_y\Y \ast \Y'$.  
\end{proof}

Another application is the following useful fact, which will be crucial to perform induction in the proof of Theorem  \ref{theo:intro_sphere_degree}.

\begin{cor}
\label{cor:map_Y_to_S^0_join_Sigma_y}
    Let $(\Y,\sfd)$ be a geodesically complete  $\CAT(1)$ space of diameter $\pi$ and   $y\in \Y$. Then, there exists a $1$-Lipschitz surjective map $g\colon \Y \to \mathbb{S}^0\ast\Sigma_y\Y$.
\end{cor}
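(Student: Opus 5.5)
The plan is to pass through the Euclidean cone and use the logarithm map at a point off the vertex. Set $\X := \textup{Cone}(\Y)$. Since $\Y$ is $\CAT(1)$ and geodesically complete, $\X$ is a proper (for compact $\Y$; in general at least complete) geodesically complete $\CAT(0)$ space, by the facts recalled after Lemma \ref{lemma:cones_and_joins}. Fix the point $x := (1,y) \in \X$. By Lemma \ref{lemma:tangent_to_section_of_cone} we have $\Sigma_x\X \cong \mathbb{S}^0 \ast \Sigma_y\Y$. Moreover, the logarithmic map $\log_x\colon \X\to\Sigma_x\X$ is $1$-Lipschitz and surjective, by the properties of the contraction maps recalled in the preliminaries; this uses only geodesic completeness and the $\CAT(0)$ condition, since the cited argument does not really need properness.

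Next, restrict $\log_x$ to the copy $\{1\}\times \Y \subset \X$ and compose with the natural identification. Define
$$g(z) := \log_x\big((1,z)\big) \qquad \text{for } z \in \Y.$$
The embedding $z\mapsto (1,z)$ is $1$-Lipschitz from $(\Y,\sfd)$ into $\X$. Indeed, the cone distance equals $2\sin(\min\{\pi,\sfd(z,z')\}/2)\le \sfd(z,z')$. Hence $g$ is $1$-Lipschitz.

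Surjectivity is the step I expect to be the main obstacle, since an arbitrary direction at $x$ leaves the section $\{1\}\times\Y$. My plan is to project radially back onto the section: replace $\{1\}\times \Y$ by the map $z\mapsto \log_x(\text{point of }\X\text{ in direction } z)$. Concretely, for a direction $v\in\Sigma_x\X$, I will use geodesic completeness of $\X$ to extend a geodesic from $x$ in direction $v$ to a long geodesic $\gamma$. A geodesic in the cone not through the vertex corresponds to a geodesic in $\Y$ of length $<\pi$. Using diameter $\pi$ and geodesic completeness of $\Y$, I will choose $\gamma$ so that its $\Y$-projection has a prescribed endpoint $z$. Then $\log_x$ of the radial ray through $(1,z)$ is the direction $v$, because the radial segment $[x,(t,z)]$ and $[x,(1,z)]$ subtend the same $\Y$-geodesic.

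The cleanest way to organize this is to consider directions of the form $\cos\theta\,\xi+\sin\theta\,(\pm)$, with $\xi\in\Sigma_y\Y$ and $\pm\in\mathbb{S}^0$. For the two poles, take $z=y$: the corresponding radial directions are $\pm$ the $\R$-direction, which are not attained on $\{1\}\times\Y$ itself. This is why I will instead define
$$g(z) := \log_x\big((\tau(z), z)\big)$$
with a suitable radius function $\tau$, or allow $(t,z)$ over all of $\X$. If that fails, my fallback is to define $g$ directly by the formula
$$g(z) = \cos(\sfd(y,z))\cdot(+) + \sin(\sfd(y,z))\cdot \log_y z$$
for $\sfd(y,z)\le\pi/2$, and to extend it symmetrically toward $(-)$ for distances in $[\pi/2,\pi]$. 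Surjectivity then comes from geodesic completeness and diameter $\pi$: every direction $\xi$ at $y$ extends to a geodesic of length $\pi$. The $1$-Lipschitz property would then be checked via the $\CAT(1)$ comparison for the angle at $y$, which gives $\angle_y\le$ the comparison angle, together with the spherical law of cosines in the join metric.
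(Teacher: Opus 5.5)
Your main construction has a genuine gap, and the problem is not only surjectivity. The $1$-Lipschitz property of $\log_x$ that you use cannot hold near $x$, not even on the section $\{1\}\times\Y$. Take $\Y=\mathbb{S}^1$, so $\X=\R^2$ and $x=(1,0)$. The points $(1,z_\pm)=e^{\pm i\varepsilon}$ satisfy $\sfd_\Y(z_+,z_-)=2\varepsilon$, while the directions of $e^{\pm i\varepsilon}-1$ at $x$ make angle $\pi-\varepsilon$.

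Also, $g(y)$ is undefined. In general $\log_x(1,z)$ makes angle $\pi/2+\sfd(y,z)/2$ with the outward direction $+$, so the image misses the whole closed hemisphere around $+$, not just the poles.

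No finite radius function $\tau$ repairs this. Let $a=\sfd(y,z)\in(0,\pi)$. In the flat sector $\textup{Cone}([y,z])$, the direction at $x$ towards $(t,z)$ makes angle $\arg(te^{ia}-1)\in(a,\pi)$ with $+$ for every finite $t>0$. Hence either $g(y)=+$ and the Lipschitz bound fails for the pair $(y,z)$, or $+$ is never attained.

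The missing idea is to let $t\to\infty$: send $z$ to the direction at $x$ of the ray from $x$ asymptotic to the radial ray $t\mapsto(t,z)$. This is exactly the paper's proof. One has $\partial_T\textup{Cone}(\Y)\cong\Y$, and $\partial\log_x\colon\partial_T\X\to\Sigma_x\X\cong\mathbb{S}^0\ast\Sigma_y\Y$ is $1$-Lipschitz because $\sfd_T\ge\angle\ge\angle_x$, and surjective by geodesic completeness.

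Your fallback formula, however, is exactly $\partial\log_x$ written in join coordinates: the point at angle $\sfd(y,z)$ from $+$ in the direction $\log_y z$. It does give a correct, self-contained proof that bypasses cones and Tits boundaries. To finish it, set $a=\sfd(y,z)$, $b=\sfd(y,z')$ and $\gamma=\angle_y(\log_y z,\log_y z')$. The join metric gives $\cos\sfd_\ast(g(z),g(z'))=\cos a\cos b+\sin a\sin b\cos\gamma$ in all hemisphere cases, which is why your symmetric extension past $\pi/2$ is the right one.

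If the triangle $y,z,z'$ has perimeter $<2\pi$, $\CAT(1)$ comparison gives $\gamma\le\tilde\gamma$, where $\tilde\gamma$ is the comparison angle. The spherical law of cosines then yields $\sfd_\ast\le\sfd(z,z')$. If the perimeter is $\ge 2\pi$, comparison is unavailable and must be handled separately. The same formula gives $\cos\sfd_\ast\ge\cos(a+b)$, hence $\sfd_\ast\le 2\pi-a-b\le\sfd(z,z')$. The degenerate cases $a,b\in\{0,\pi\}$ are immediate.

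Surjectivity works as you say, since local geodesics of length $\le\pi$ in a $\CAT(1)$ space are geodesics. The paper's argument is shorter given the machinery already in place, and it explains where the formula comes from. Yours is elementary, makes the Lipschitz estimate explicit, and does not need properness of $\textup{Cone}(\Y)$.
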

\begin{proof}
    We consider the space $\X := \textup{Cone}(\Y)$ and the point $ x=(1,y)\in \textup{Cone}(\Y)$. 
    Lemma \ref{lemma:tangent_to_section_of_cone} implies that $\Sigma_{x}\textup{Cone}(\Y) \cong \mathbb{S}^0 \ast \Sigma_y\Y$. Since $\X$ is a $\CAT (0)$ Euclidean cone (as $\Y$ is $\CAT(1)$), it admits the cone at infinity and its Tits boundary $\partial_T\X$ is isometric to $(\Y,\sfd)$. Then, the $1$-Lipschitz, surjective map $\partial \log_{x} \colon \partial_T\X \cong \Y \to \Sigma_{x}\textup{Cone}(\Y) \cong \mathbb{S}^0 \ast \Sigma_y\Y$ defined in \eqref{eq:defin_log_from_Tits_boundary} gives the thesis.
\end{proof}

\vspace{2mm}
\subsection{Strainer maps and local contractibility of spheres}  ${}$\\
Strainer maps play a crucial role in the description of the geometry of geodesically complete spaces with upper curvature bounds, see \cite{LN19}. In the sequel we will need only the case of $(1,\delta)$-strainer maps in $\CAT(0)$ spaces, so we will focus on this case.

\begin{defin}
    Let $(\Sigma,\sfd)$ be $\CAT(1)$ space, and let $\delta > 0$. A point $v\in \Sigma$ is called {\em $\delta$-spherical} if there exists $\bar{v}\in \Sigma$ such that
   \begin{equation}
       \label{eq:defin_delta_spherical}
       \sfd(v,w)+\sfd(w,\bar{v}) < \pi + \delta
   \end{equation}
   for every $w\in \Sigma$. The points $v$ and $\bar{v}$ are said to be $\delta$-spherical opposite.
\end{defin}

The condition above implies that every two {\em antipodes} of $v$, i.e. points with distance $\pi$ from $v$, are at mutual distance at most $2\delta$.

\begin{defin}
    Let $(\X,\sfd)$ be a proper, geodesically complete  $\CAT(0)$ space, let $p\in \X$, $\delta > 0$.   We say that the distance map $\sfd_p\colon \X \to \R$, $x\mapsto \sfd(p,x)$ is {\em $(1,\delta)$-strainer} on a subset $U\subseteq \X$ if for every $x\in U$ the direction $\log_x(p)$ is $\delta$-spherical in  $\Sigma_x\X$.
\end{defin}

The only example of $(1,\delta)$-strainer map we will use is the distance from the vertex of an Euclidean cone:
\begin{ex}
\label{ex:strainer_map_cones}
    Let $(\Y,\sfd_\Y)$ be a geodesically complete, $\CAT(1)$ metric space. Consider  $(\X,\sfd):=\textup{Cone}(\Y)$ and let $o\in \X$ be the vertex. Then $\sfd_o$ is a $(1,\delta)$-strainer map on $U=\X\setminus\{o\}$, for every $\delta > 0$. Indeed, for every $x\in \X\setminus\{o\}$, the direction $\log_x(o) \in \Sigma_x\X$ has a unique antipode.
\end{ex}

Fibers of strainer maps are locally contractible and locally path connected.

\begin{prop}
\label{prop:fibers_strainer_maps_loc_contr_loc_path_conn}
    Let $(\X,\sfd)$ be a proper, geodesically complete, $\CAT(0)$ space, let $p\in \X$ and $0 < \delta \le 1/20$. Suppose $\sfd_p\colon \X \to \R$ is $(1,\delta)$-strainer on  $U\subseteq \X$. Then, for  every $t\in [0,+\infty)$ the set $\sfd_p^{-1}(t)\cap U = S_t(p)\cap U$ is locally contractible and locally path connected.
\end{prop}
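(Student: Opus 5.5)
This statement is essentially a specialization of the local structure theory of strainer maps developed by Lytchak–Nagano in \cite{LN19}. The plan is to reduce it to their result on fibers of strainer maps rather than prove it from scratch.

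The first step is to translate the definition. For $x\in U$, the condition that $\log_x(p)$ is $\delta$-spherical in $\Sigma_x\X$ is exactly the condition that the one-dimensional map $\sfd_p$ is a $(1,\delta)$-strainer map at $x$ in the sense of \cite{LN19}. The authors of \cite{LN19} state strainer maps using a point $q$ with $\angle_x(p,q)$ close to $\pi$, together with the almost-spherical condition on directions. Geodesic completeness lets us realize the $\delta$-spherical opposite direction $\bar v$ by an actual point $q$ on a geodesic issuing from $x$ in direction $\bar v$, up to arbitrarily small error. So, after possibly shrinking $\delta$ by a universal factor, the two notions agree. Since the bound $\delta\le 1/20$ is within the range where their theory applies, we may invoke their results on a small neighborhood of every point of $U$.

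Next, we apply the structure theorem for fibers. By \cite[Theorem 1.11 / Section 9]{LN19}, for an $(m,\delta)$-strainer map $F$ on an open set $V$ and small enough neighborhoods, the fibers $F^{-1}(t)\cap V$ are locally contractible; in fact, $F$ is locally a Hurewicz fibration with contractible fibers near each point, and small balls intersected with fibers are contractible. Here $m=1$ and $F=\sfd_p$. For $x\in S_t(p)\cap U$, we choose $r$ small so that $B_r(x)\subseteq U$, taking the neighborhood to be open. Then $S_t(p)\cap B_{r'}(x)$ is contractible, or at least contracts within $S_t(p)\cap B_r(x)$, for all $r'<r$. This gives local contractibility. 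Local path connectedness follows immediately, since contractible neighborhoods are path connected and they form a neighborhood basis. If only the "contracts inside a larger ball" version is available, we use the standard equivalence that this weak form still yields local path connectedness: any two points of the small ball are joined by a path inside the larger one.

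The main obstacle is checking that the hypotheses of \cite{LN19} are met on $U$ when $U$ is not open. If $U$ is arbitrary, we must first note that the set where $\sfd_p$ is $(1,\delta)$-strainer is open. This follows from the semicontinuity of angles in $\CAT(0)$ spaces: the $\delta$-spherical condition is an open condition, by upper semicontinuity of $(x,y)\mapsto\angle_x$ together with the strict inequality in \eqref{eq:defin_delta_spherical}. Possibly this requires replacing $\delta$ by $2\delta$, which is harmless for $\delta\le 1/20$. Once this openness is established, the neighborhoods above can be taken inside a strainer region, and the argument goes through.
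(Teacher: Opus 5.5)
Your route is the paper's route: specialize the Lytchak--Nagano theory of fibers of strainer maps to $k=1$, $F=\sfd_p$, $\delta\le 1/20$. The gap is that the step carrying the whole proof is never pinned down. You hedge between ``Theorem 1.11 / Section 9'', ``locally a Hurewicz fibration with contractible fibers'', and a weaker ``contracts inside a larger ball'' version, so in effect you cite the proposition itself. Moreover, a Hurewicz fibration with contractible fibers on \emph{some} neighbourhood $V$ of $x$ yields local contractibility at $x$ only if such $V$ can be taken arbitrarily small.

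The paper uses a precise input, \cite[Theorem 9.1]{LN19}. For $x\in S_t(p)\cap U$ it provides $\varepsilon_x>0$ and a continuous retraction $G\colon B_{\varepsilon_x}(x)\to B_{\varepsilon_x}(x)\cap S_t(p)$ with $G(x)=x$ and $\sfd(G(y),x)\le\sfd(y,x)$. The conclusion then takes a few lines, and these are what your proposal is missing. For $\varepsilon\le\varepsilon_x$, $y\in B_\varepsilon(x)\cap S_t(p)$ and $s\in[0,1]$, set $\Psi(y,s):=G\bigl(\phi_x^{\varepsilon,(1-s)\varepsilon}(y)\bigr)$, where $\phi$ is the $\CAT(0)$ geodesic contraction toward $x$. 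The bound $\sfd(G(y),x)\le\sfd(y,x)$ keeps the homotopy inside $B_\varepsilon(x)\cap S_t(p)$. Hence every such fiber-ball is contractible in itself, which is the strong form of local contractibility on a full neighbourhood basis. The paths $s\mapsto\Psi(y,s)$ give local path connectedness directly.

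Your $\delta$-bookkeeping also needs correcting, although the errors turn out not to matter. No translation of definitions is needed, since the paper's notions of $\delta$-spherical point and $(1,\delta)$-strainer are taken from \cite{LN19}. In any case you cannot ``shrink $\delta$'', because the hypothesis gives only $\delta$. Replacing $\delta$ by $2\delta$ is \emph{not} harmless for $\delta\le 1/20$, because $2\delta$ can exceed the threshold $1/20$.

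Fortunately, openness of the strainer condition holds with the same $\delta$. Realize $\bar v=\log_x(q)$ by geodesic completeness. Suppose $x_i\to x$ and $w_i\in\Sigma_{x_i}\X$ violate the strict inequality for $\log_{x_i}(p)$ and $\log_{x_i}(q)$. Points $y_i$ at a fixed small distance from $x_i$ in direction $w_i$ subconverge by properness. Upper semicontinuity of $(x,y,z)\mapsto\angle_x(y,z)$ then contradicts the strict inequality at $x$. With this fix, your openness remark is a small genuine addition: when $U$ is not open, the paper tacitly needs the strainer property on a whole neighbourhood of $x$ to apply \cite[Theorem 9.1]{LN19}.
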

\begin{proof}
    By \cite[Theorem 9.1]{LN19}, for every $x\in S_t(p)\cap U$ we can find $\varepsilon_x>0$ and a continuous map $G\colon B_{\varepsilon_x}(x) \to B_{\varepsilon_x}(x) \cap S_t(p)$ with $G(x)=x$ and $\sfd(G(y),x)\le \sfd(y,x)$ for every $y\in B_{\varepsilon_x}(x)$. We now define the map 
    $$\Psi\colon (B_{\varepsilon_x}(x) \cap S_t(p))\times [0,\varepsilon_x] \to B_{\varepsilon_x}(x) \cap S_t(p)$$
    sending $(y,t)$ to $G(\phi_x^{\varepsilon_x,(1-t)\varepsilon_x}(y))$, where the maps $\phi$ are the contraction maps defined in \eqref{eq:defin_contraction_map}. By construction, $\Psi$ is continuous, $\Psi(\cdot,0) = \textup{id}$ and $\Psi(\cdot,1) = x$. This shows that $B_{\varepsilon_x}(x) \cap S_t(p)$ is contractible. Moreover, the path $t\mapsto \Psi(y,t)$ is contained in $B_{\varepsilon_x}(x) \cap S_t(p)$, proving that every point $y \in B_{\varepsilon_x}(x) \cap S_t(p)$ can be connected via a path in $B_{\varepsilon_x}(x) \cap S_t(p)$ to $x$. Therefore, $B_{\varepsilon_x}(x) \cap S_t(p)$ is path connected.
\end{proof}

\begin{obs}{\em
    A more refined version of Proposition \ref{prop:fibers_strainer_maps_loc_contr_loc_path_conn} has been proved in \cite[Proposition 8.3]{Nagano2026}, but we do not need it here. 
    }
\end{obs}

As an application of the stability of strainer maps under Gromov-Hausdorff convergence we obtain the following result.

\begin{prop}
\label{prop:spheres_loc_contr_and_loc_path_connected}
    Let $(\X,\sfd)$ be a proper, geodesically complete, $\CAT(0)$ space and let $x\in \X$.
    \begin{itemize}
        \item[(i)] There exists $r_x>0$ such that $S_r(x)$ is locally contractible and locally path connected for every $0<r<r_x$.
        \item[(ii)] If $(\X,\sfd)$ admits the cone at infinity then there exists $R_x>0$ such that $S_{R}(x)$ is locally contractible and locally path connected for every $R>R_x$.
    \end{itemize}
\end{prop}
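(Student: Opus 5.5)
The plan is to reduce both statements to Proposition \ref{prop:fibers_strainer_maps_loc_contr_loc_path_conn}. We must show that the distance function $\sfd_x$ is a $(1,\delta)$-strainer on a small punctured ball in case (i), and on the complement of a large ball in case (ii), for a fixed $\delta\le 1/20$. Both cases rest on the same mechanism. The relevant rescalings of $\X$ converge to a Euclidean cone, namely $T_x\X=\textup{Cone}(\Sigma_x\X)$ or $C_\infty\X=\textup{Cone}(\partial_T\X)$, with basepoint going to the vertex. By Example \ref{ex:strainer_map_cones}, the distance from the vertex is a $(1,\delta)$-strainer on the cone minus the vertex for every $\delta>0$. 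We then transfer this property back to $\X$ using the stability of strainers under Gromov--Hausdorff convergence, as proved in \cite{LN19}.

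For (i), argue by contradiction. Suppose there are points $y_i\to x$ with $y_i\ne x$ such that $\log_{y_i}(x)$ is not $\delta$-spherical in $\Sigma_{y_i}\X$. Set $r_i=\sfd(x,y_i)$ and rescale to $(\X, r_i^{-1}\sfd, x)$. This sequence converges to $(T_x\X,o)$, and, up to a subsequence, the points $y_i$ converge to a point $y_\infty$ at distance $1$ from $o$. By Example \ref{ex:strainer_map_cones}, the direction towards $o$ at $y_\infty$ is $\delta'$-spherical for every $\delta'>0$. The key input is the semicontinuity statement of \cite{LN19}. It says that being a $(1,\delta)$-strainer at a point is an open condition that is stable under GH limits of geodesically complete $\CAT(\kappa)$ spaces. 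Hence if $p_\infty$ strains at $y_\infty$ with constant $\delta/2$, then for $i$ large $x$ strains at $y_i$ with constant $\delta$, which is a contradiction. Since the strainer condition is scale-invariant (it only involves spaces of directions), rescaling is harmless. This gives some $r_x$ such that $\sfd_x$ is a $(1,\delta)$-strainer on $B_{r_x}(x)\setminus\{x\}$. Proposition \ref{prop:fibers_strainer_maps_loc_contr_loc_path_conn} with $U=B_{r_x}(x)\setminus\{x\}$ then gives (i).

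For (ii), run the same argument with $\lambda_i=\sfd(x,y_i)^{-1}\to 0$ for points $y_i\to\infty$. The existence of the cone at infinity (Proposition \ref{prop:properties_asymptotic_cone}) ensures that $(\X,\lambda_i\sfd,x)$ converges to $C_\infty\X=\textup{Cone}(\partial_T\X)$ with $x$ going to the vertex. Moreover, $\partial_T\X$ is compact and $\CAT(1)$, and $C_\infty\X$ is geodesically complete, so $\partial_T\X$ is geodesically complete as well. Hence Example \ref{ex:strainer_map_cones} applies in the limit, and the same contradiction yields $R_x$ such that $\sfd_x$ is a $(1,\delta)$-strainer on $\X\setminus\overline{B}_{R_x}(x)$. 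Applying Proposition \ref{prop:fibers_strainer_maps_loc_contr_loc_path_conn} again gives (ii).

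The main obstacle is the stability step: we must correctly invoke the \cite{LN19} result that strainers persist under pointed GH convergence. The delicate point is that spaces of directions need not converge under GH convergence, so the spherical condition cannot be passed to the limit naively. The first thing I would try is to use the equivalent reformulation of strainers in \cite{LN19}, phrased via comparison points at a fixed small distance: a point $\bar p$ near $y$ such that $\sfd(p,y)+\sfd(y,\bar p)$ is almost $\sfd(p,\bar p)$, plus a condition at points near $y$. This condition is stated in terms of distances alone, so it passes directly from the limit cone back to the approximating spaces. In our situation the opposite point $\bar p$ is easy to produce, by extending the geodesic from $p$ through $y$ beyond $y$ in the cone.
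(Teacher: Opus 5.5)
Your proposal is correct and follows the paper's proof. You rescale towards $T_x\X$ (resp.\ $C_\infty\X=\textup{Cone}(\partial_T\X)$), use Example~\ref{ex:strainer_map_cones} at the vertex, transfer the $(1,\delta)$-strainer property back to $\X$ via the stability result of \cite{LN19} (Theorem 13.1 there), and conclude with Proposition~\ref{prop:fibers_strainer_maps_loc_contr_loc_path_conn}; your contradiction argument with $y_i\to x$ (resp.\ $y_i\to\infty$) only makes explicit the uniformity that the paper states as ``$\sfd_x$ is a $(1,\delta)$-strainer on a neighbourhood of $S_r(x)$ for $r$ small''.
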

\begin{proof}
    The tangent space $(T_x\X, o_x)$  is  a metric cone with base point given by the vertex $o_x$,  and it is the pointed, Gromov-Hausdorff limit of the sequence $(\X,\lambda_i\sfd,x)$ for every $\lambda_i \to \infty$. The distance map from $o_x$ is $(1,\delta)$-strainer for every $\delta > 0$, as explained in the Example \ref{ex:strainer_map_cones}. Now, \cite[Theorem 13.1]{LN19} implies that $\sfd_x \colon \X \to \R$ is $(1,\delta)$-strainer on a neighbourhood of $S_r(x)$, for $r$ sufficiently small. So, (i) follows from Proposition \ref{prop:fibers_strainer_maps_loc_contr_loc_path_conn}. 
    
    If $(\X,\sfd)$ admits the cone at infinity, then the sequence of spaces $(\X,\lambda_i\sfd,x)$ converges in the Gromov-Hausdorff sense to $C_\infty\X$ for every $\lambda_i \to 0$. Proposition \ref{prop:properties_asymptotic_cone} says that $C_\infty\X$ is a metric cone. Arguing exactly as before we conclude the proof of   (ii).
\end{proof}

    The stability of strainer maps has been also used in \cite[Theorem 1.12]{LN19} to deduce homotopy properties of small spheres. In the same way, homotopy properties of large spheres in case of the existence of the cone at infinity have been deduced in \cite{Nagano2022}:

\begin{prop}[{\cite[Theorem 4.6]{Nagano2022}}]
    \label{prop:homotopy_stability_large_spheres}
    Let $(\X,\sfd)$ be a proper, geodesically complete, $\CAT(0)$ space admitting the cone at infinity. For every $x\in \X$ there exists $R_x > 0$ such that $S_R(x)$ is homotopy equivalent to $\partial_T\X$ for every $R\ge R_x$. 
\end{prop}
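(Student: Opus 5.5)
The plan follows the scheme of \cite[Theorem 1.12]{LN19}, with the tangent cone replaced by the cone at infinity. Fix $x\in\X$. By Proposition \ref{prop:properties_asymptotic_cone}, for every sequence $\lambda_i\to 0$ the rescaled spaces $(\X,\lambda_i\sfd,x)$ converge to $C_\infty\X\cong\textup{Cone}(\partial_T\X)$, with $x$ going to the vertex $o$. In the cone, the unit sphere $S_1(o)$ is isometric to $(\partial_T\X,\min\{\sfd_T,\pi\})$, which is homeomorphic to $\partial_T\X$. Moreover, by Example \ref{ex:strainer_map_cones}, $\sfd_o$ is $(1,\delta)$-strainer on $C_\infty\X\setminus\{o\}$ for every $\delta$. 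So we reduce to comparing the sphere $S_R(x)$, rescaled by $1/R$, with $S_1(o)$.

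\textbf{Step 1 (uniform strainer property).} As in the proof of Proposition \ref{prop:spheres_loc_contr_and_loc_path_connected}(ii), stability of strainer maps \cite[Theorem 13.1]{LN19} gives the following: for a fixed small $\delta$ and all $R$ large, $\sfd_x$ is $(1,\delta)$-strainer on the annulus $A_R=\{R/2<\sfd_x<2R\}$, with quantitative control at scale $R$ coming from the rescaled picture. Consequently \cite[Theorem 9.1]{LN19} provides local retractions $G$ onto the spheres $S_t(x)$, $t\in(R/2,2R)$, defined on balls of radius $\varepsilon R$ with $\varepsilon$ independent of $R$. Hence the spheres $S_R(x)$ are uniformly locally contractible at scale $\varepsilon R$, and the same holds for $S_1(o)$.

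\textbf{Step 2 (approximations to maps).} For large $R$, take a $\tau R$-Gromov–Hausdorff approximation between $S_R(x)$ and $R\cdot S_1(o)$, with $\tau\ll\varepsilon$. By Step 1 both spaces are compact, finite-dimensional and uniformly locally contractible. A standard nerve argument (Petersen-type, as used in \cite{LN19}) then turns the approximation into continuous maps $F\colon S_R(x)\to S_1(o)$ and $H\colon S_1(o)\to S_R(x)$. The compositions $H\circ F$ and $F\circ H$ are $C\tau$-close to the identities, so local contractibility makes them homotopic to the identities. This gives $S_R(x)\simeq\partial_T\X$.

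\textbf{Main obstacle.} The difficulty is the uniformity of the local contractibility: the contractibility radius must be comparable to $R$, with a uniform contraction function. This requires a scale-invariant form of \cite[Theorems 9.1 and 13.1]{LN19}. Since those statements concern a fixed convergent sequence, I would argue by contradiction along a sequence $R_i\to\infty$ and use the convergence to the cone at infinity for that sequence.

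A further subtlety is the finite-dimensionality needed for the nerve argument. It follows from Proposition \ref{prop:properties_asymptotic_cone}: $\X$ is metrically doubling, hence $\dim\X<\infty$.

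Alternatively, one could avoid the nerve argument. The Lipschitz contractions $\phi^{R',R}_x$ combined with the retractions $G$ give a direct deformation between $S_{R'}(x)$ and $S_R(x)$, which shows that all large spheres are homotopy equivalent to each other. One then compares a single large sphere with $S_1(o)$.
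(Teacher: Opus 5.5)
Your proposal is correct and follows essentially the route the paper relies on: Nagano's Theorem 4.6, which transposes the proof of \cite[Theorem 1.12]{LN19} from the tangent cone to the cone at infinity. That route consists of stability of the strainer property of $\sfd_x$ on rescaled annuli, uniform local contractibility of the rescaled spheres $\frac1R S_R(x)$, and Petersen-type homotopy stability under their Gromov--Hausdorff convergence to the unit sphere of $\textup{Cone}(\partial_T\X)$.

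Two minor remarks. First, with the cone metric $S_1(o)$ carries the chordal distance $2\sin(\min\{\sfd_T,\pi\}/2)$, so it is homeomorphic (not isometric) to $(\partial_T\X,\min\{\sfd_T,\pi\})$, which is all you need. Second, your closing ``alternative'' does not actually bypass the nerve argument, since comparing a single large sphere with $S_1(o)$ is precisely where that stability step is required.
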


    \vspace{2mm}
\subsection{$\CAT(\kappa)$ homology and topological manifolds}
\label{subsec:CAT_hom_top_mfds}
${}$

A homology $n$-manifold (over $\Z$) is a Hausdorff, locally compact topological space $\X$ with finite $\Z$-cohomological dimension (see Definitions 16.3-16.7 in \cite{bredon-book})
such that
$H_\ast(\X,\X  \setminus  \{x\}) \simeq H_\ast(\R^n, \R^n  \setminus \{0\})$ for every $x\in \X$, where $\simeq$ denotes an isomorphism of   algebraic structures.
\vspace{1mm}

Here, and everywhere in the paper, we consider Borel-Moore homology with integer coefficients and closed supports. In the definition of homology manifold, Borel-Moore homology is preferable, 
since singular homology has a more pathological behavior in general (see \cite{BarrattMilnor1962}).
Borel-Moore homology is easier to understand for Hausdorff,  locally compact and paracompact pairs $(\X, A)$ with finite $\Z$-cohomological dimension, which  are moreover locally contractible: in fact,  under these assumptions, it coincides with the (reduced) homology groups of {\em locally
finite} singular chains, see 
\cite[Corollary V.13.6 and comments in \S V.1.19]{bredon-book}. When moreover $\X$ is compact, then it coincides with the usual (reduced) singular homology (see again \cite[Ibid.]{bredon-book}).

On the other hand, $H_c^\ast(\X)$ will always denote, in the following,  Alexander-Spanier  or \v{C}ech cohomology groups with $\Z$ coefficients and compact supports \footnote{They are the same, cp. \cite[Chpt.6, Ex.D.3]{spanier}, and  coincide  with singular cohomology   when, moreover, $\X$ is   locally contractible (best, when $\X$ is homology locally connected, with respect to singular homology, see \cite[Theorem III. 2.1]{bredon-book}).}.

\begin{obs}
\label{rmk:BorelMoore_vs_Singular}
{\em
Some comments are in order.
\begin{itemize}[leftmargin=3mm,itemindent=0mm,labelwidth=\itemindent,labelsep=2mm,align=left]
    \item[(i)]  A good synthesis of some properties of homology manifolds (with or without boundary) can be found in \cite{AncelGuilbaut1999}.
In particular, according to their remark 3, p.1270,   and using \cite{Wilder1965}, every homology manifold is locally path connected. 
\vspace{1mm}

\item[(ii)] Homology manifolds are not necessarily locally contractible. Actually, a finite dimensional homology manifold is locally contractible if and only if it is an Absolute Neighbourhood Retract (ANR), which is equivalent to being {\em homotopically locally $1$-connected} (cp. \cite[remark 7, p.1270]{AncelGuilbaut1999}). If a homology manifold with boundary is an ANR, the boundary is a homology manifold (by \cite{Mitchell1990}), but it might  not be an ANR.
\vspace{1mm}

 \item[(iii)] We will apply the theory of homology manifolds to study arbitrary metric spheres of $\CAT(0)$ homology manifolds: therefore,  in this generality, we cannot assume that they are locally contractible.
\vspace{1mm}

\item[(iv)] Homology manifolds may have infinite topological dimension, but the topological dimension is always finite when they carry a  $\CAT(\kappa)$ metric, see Lemma \ref{lemma:CAT_homology_are_purely_dimensional} below.\\
As a consequence, all the homology manifolds we will consider in this paper will be finite dimensional, being subsets of  $\CAT(\kappa)$ homology manifolds.
 \vspace{1mm}   

\item[(v)] First countable homology manifolds are cohomologically locally connected over $\Z$ (clc$_\Z$ in the terminology of \cite[Def. II.17.1]{bredon-book}), see \cite{harlap} and \cite{mitchell}.
 \vspace{1mm}   

\item[(vi)] 
A Hausdorff, locally compact topological space $\X$ with finite $\Z$-cohomological dimension is called a {\em cohomology $n$-manifold} if all the local cohomology groups $H_c^{k}(\X, \X \setminus \{x\})$ are equivalent to $\Z$ in degree $n$, and vanish in degree different from $n$.\\
As a consequence of (v) and of \cite[Theorem V.16.8]{bredon-book},   for first countable spaces the definitions of homology $n$-manifold  and   cohomology $n$-manifold  (over $\Z$) coincide (see also  \cite[comment after Theorem V.16.15]{bredon-book}).
\end{itemize}
}
\end{obs}

 In \cite{LN-finale-18}, to which we frequently refer, the authors do not specify the homology theory they use. However, they use  it either for spaces of directions (which are $\CAT(1)$ hence locally contractible), or for small enough metric spheres (that are locally contractible by Proposition \ref{prop:spheres_loc_contr_and_loc_path_connected}). 
    Since moreover these spaces are compact,  classical singular homology coincides with Borel-Moore's. Therefore, Lytchak and Nagano's results recalled in this section are valid for both homology theories.   
\vspace{2mm}

 The following basic result follows from   \cite{LN19} (see also \cite[Proposition II.5.12]{BH09}).
\begin{lemma}
\label{lemma:CAT_homology_are_purely_dimensional}
    Let $(\X,\sfd)$ be a proper, $\CAT(\kappa)$ homology $n$-manifold. Then $(\X,\sfd)$ is geodesically complete and it is purely $n$-dimensional.
\end{lemma}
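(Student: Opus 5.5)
Geodesic completeness comes first, since pure dimension will be read off from it. Let $\mathfrak{g}\colon[a,b]\to\X$ be a local geodesic and set $x=\mathfrak{g}(b)$. I will show that the incoming direction $v=\log_x(\mathfrak{g}(b-\varepsilon))\in\Sigma_x\X$ has an antipode, i.e.\ a direction at angle $\pi$ from it. In a $\CAT(\kappa)$ space, a geodesic issuing from $x$ in such a direction concatenates with $\mathfrak{g}$ to give a local geodesic, so this is enough for local extendability.

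By the standard criterion (\cite[Proposition II.5.12]{BH09}), it suffices that every small punctured neighbourhood be non-contractible in a suitable sense, namely $H_\ast(\overline{B}_r(x),\overline{B}_r(x)\setminus\{x\})\neq 0$. This holds here: small closed balls are contractible (they are geodesically convex for $r<D_\kappa/2$), and by excision this relative group equals the local homology $H_\ast(\X,\X\setminus\{x\})\simeq\Z$ in degree $n$, using Borel--Moore homology as fixed above. If $v$ had no antipode, the geodesic contraction towards a point slightly beyond $x$ along $\mathfrak{g}$ reversed would deform $\overline{B}_r(x)\setminus\{x\}$ inside itself to a point. Hence the relative group would vanish, a contradiction.

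Pure dimension comes next. Once geodesic completeness is known, \cite{LN19} applies. In particular, a point $x$ has dimension $\dim(x;\X)=m$ if and only if the topological dimension of small balls is $m$. Moreover, the set of regular points, where a neighbourhood is bi-Lipschitz to an open subset of $\R^m$, is dense in the $m$-dimensional part. For such a regular point $y$, the local homology $H_\ast(\X,\X\setminus\{y\})$ is concentrated in degree $m$. Comparing with the homology manifold condition forces $m=n$. So every ball $B_r(x)$, which contains regular points of dimension $\dim(B_r(x))$, has dimension $n$. Hence $\dim(x;\X)=n$ for all $x$, and $\X$ is purely $n$-dimensional.

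The main obstacle I expect is the careful topological comparison. First, the non-contractibility criterion must be applied with Borel--Moore homology, which is legitimate because small balls and punctured balls in $\CAT(\kappa)$ spaces are locally contractible ANRs; there it agrees with singular homology. Second, one must verify that the homology manifold has finite cohomological dimension consistent with the \cite{LN19} dimension theory. If the density-of-regular-points argument is awkward, I would instead use that $\dim(x;\X)$ equals the maximal $k$ with $H_k(\overline{B}_r(x),\overline{B}_r(x)\setminus\{y\})\neq0$ for some nearby $y$ (a characterization in \cite{LN19}), which directly yields $n$.
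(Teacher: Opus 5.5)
Your proposal is correct and follows essentially the same route as the paper. Geodesic completeness comes from the local-homology criterion of \cite[Proposition II.5.12]{BH09}, whose proof you sketch via the geodesic contraction of the punctured ball. Pure $n$-dimensionality comes from comparing, at the Lipschitz-manifold points of the $k$-dimensional part given by \cite[Theorem 1.2]{LN19}, the local homology of $\R^k$ with that of $\R^n$. The only difference is cosmetic: you argue directly that every small ball contains regular points of its top dimension, while the paper argues by contradiction from a point of dimension $k\neq n$.
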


\begin{proof}
    The fact that $(\X,\sfd)$ is geodesically complete follows by \cite[Proposition II.5.12]{BH09}. For every $k\in \N$ we denote by $\X_k$ the set of points of $\X$ with dimension $k$. Every $\X_k$ contains a dense subset $M_k$, open in $\X$, which is a topological $k$-manifold, by \cite[Theorem 1.2]{LN19}. So, if there exists a point $x\in \X$ of dimension $k \neq n$, then $M_k \neq \emptyset$. For every $y\in M_k$ we have that $H_\ast(\X,\X\setminus\{y\})\simeq H_\ast(\R^k,\R^k\setminus \{0\})$. On the other hand, $H_\ast(\X,\X\setminus\{y\})\simeq H_\ast(\R^n,\R^n\setminus \{0\})$ because $\X$ is a homology $n$-manifold. This gives the contradiction.
\end{proof}


 Homology manifolds can be characterized among proper, geodesically complete, $\CAT(\kappa)$ spaces by looking at the space of directions.

\begin{prop}[Lemma 3.1 and Corollary 3.4, \cite{LN-finale-18}] 
\label{prop:characterization_hom_manifolds}
    Let $(\X,\sfd)$ be a proper, geodesically complete, $\CAT(\kappa)$ space. 
    Then $\X$ is a homology $n$-manifold if and only if   $H_\ast(\Sigma_x\X) \simeq H_\ast(\mathbb{S}^{n-1})$ for every $x\in \X$.  
    In this case, each $\Sigma_x\X$ is a homology $(n-1)$-manifold.
\end{prop}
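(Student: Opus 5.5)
The plan has two parts. First, I would reduce the local homology of $\X$ at a point to the homology of $\Sigma_x\X$; this gives the equivalence. Second, I would transfer the homology-manifold property from $\X$ to its spaces of directions by passing through the tangent cone.

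\textbf{Step 1: local homology at $x$.} Fix $x\in\X$ and $0<r<r_x$, with $r_x$ as in Proposition \ref{prop:homotopy_type_small_metric_spheres}. By excision, $H_\ast(\X,\X\setminus\{x\})\simeq H_\ast(B_r(x),B_r(x)\setminus\{x\})$. The ball $B_r(x)$ is contractible through the contraction maps $\phi^{r,s}_x$, $s\to 0$. Next I would use the statement in \cite{LN19} (the same circle of results as Proposition \ref{prop:homotopy_type_small_metric_spheres}) that the punctured ball $B_r(x)\setminus\{x\}$ is homotopy equivalent to $S_r(x)$, and hence to $\Sigma_x\X$. The long exact sequence of the pair then gives
$$H_k(\X,\X\setminus\{x\})\simeq \tilde H_{k-1}(\Sigma_x\X)\quad\text{for all } k.$$
Singular and Borel–Moore homology agree here, since $S_r(x)$ is compact and locally contractible by Proposition \ref{prop:spheres_loc_contr_and_loc_path_connected}, and $\Sigma_x\X$ is $\CAT(1)$. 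Consequently $\X$ has the local homology of $\R^n$ at $x$ if and only if $H_\ast(\Sigma_x\X)\simeq H_\ast(\mathbb{S}^{n-1})$.

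\textbf{Step 2: the equivalence.} The remaining condition in the definition of a homology manifold is finite $\Z$-cohomological dimension. It holds because a proper, geodesically complete $\CAT(\kappa)$ space has locally finite Hausdorff dimension, hence locally finite topological dimension \cite{LN19}, and that bounds the cohomological dimension. Together with Step 1 this proves the ``if and only if''.

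\textbf{Step 3: $\Sigma_x\X$ is a homology $(n-1)$-manifold.} This is the main obstacle. I would apply Step 1 to the compact, geodesically complete $\CAT(1)$ space $\Sigma_x\X$: it suffices to show $H_\ast(\Sigma_v\Sigma_x\X)\simeq H_\ast(\mathbb{S}^{n-2})$ for every $v\in\Sigma_x\X$. By Lemma \ref{lemma:tangent_to_section_of_cone},
$$\Sigma_{(1,v)}T_x\X\cong \mathbb{S}^0\ast\Sigma_v\Sigma_x\X,$$
which is the suspension of $\Sigma_v\Sigma_x\X$. Suspension shifts reduced homology by one, so it is enough to show $H_\ast(\Sigma_{(1,v)}T_x\X)\simeq H_\ast(\mathbb{S}^{n-1})$. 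By Step 1 applied to $T_x\X$, this means showing that $T_x\X$ has the local homology of $\R^n$ at the point $(1,v)$.

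For that, I would use that $T_x\X$ is the pointed Gromov–Hausdorff limit of the rescalings $(\X,\lambda_i\sfd,x)$ with $\lambda_i\to\infty$. Choose points $y_i\in\X$ converging to $(1,v)$. The space $\X$ has the local homology of $\R^n$ at each $y_i$, since $\X$ is a homology $n$-manifold. Transferring this to the limit point is the delicate part. I would first try the stability of strainers (\cite[Theorem 13.1]{LN19}) together with the argument behind Proposition \ref{prop:homotopy_type_small_metric_spheres}: the goal is that, for a fixed small $\rho$, the sphere $S_\rho((1,v))$ in $T_x\X$ is homotopy equivalent to the spheres $S_{\rho/\lambda_i}(y_i)$ in $\X$ for large $i$. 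The latter spheres carry the homology of $\mathbb{S}^{n-1}$ by Step 1 applied in $\X$, which would close the argument.
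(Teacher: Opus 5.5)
The paper gives no proof of its own here; it quotes Lemma 3.1 and Corollary 3.4 of Lytchak--Nagano.

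\textbf{What is fine.} Your Steps 1--2 are the standard local-homology computation behind the equivalence. One small point: the definition needs a \emph{global} bound on cohomological dimension, not a locally finite one. In the `if' direction this holds because the local homology condition forces $\dim(y;\X)=n$ at every $y$, as in Lemma \ref{lemma:CAT_homology_are_purely_dimensional}. Your reduction of the last claim is also correct: via $\Sigma_{(1,v)}T_x\X\cong\mathbb{S}^0\ast\Sigma_v\Sigma_x\X$, it suffices that $T_x\X$ has the local homology of $\R^n$ at each $(1,v)$.

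\textbf{The gap.} It is in the last sentence of Step 3. Step 1 identifies $S_r(y)$ with $\Sigma_y\X$ only for $r<r_y$, and $r_y$ is not uniform in $y$: it comes from the scale below which $\sfd_y$ is a strainer on a punctured ball around $y$. Your radii $\rho/\lambda_i$ are a fixed multiple of $\sfd(x,y_i)$. Strainer stability gives the strainer property of $\sfd_{y_i}$ only on an annulus at that scale. At smaller scales $\X$ near $y_i$ may look like the possibly very different cone $T_{y_i}\X$, so nothing forces $\rho/\lambda_i<r_{y_i}$. Hence nothing you have established shows that $S_{\rho/\lambda_i}(y_i)$ has the homology of $\mathbb{S}^{n-1}$. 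This is exactly where the homology-manifold hypothesis at points near $x$ must enter in a scale-free way, and your argument never uses it in such a form. Separately, the homotopy equivalence $S_\rho(p)\simeq S_{\rho/\lambda_i}(y_i)$ is only announced. It is plausible via strainer stability, uniform local contractibility of the spheres and Petersen's stability theorem, in the spirit of Proposition \ref{prop:homotopy_stability_large_spheres}, but it would have to be carried out.

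\textbf{A way to close it.} Put $s=\rho/\lambda_i$, $K=\overline{B}_{s/2}(y_i)$ and $A_i=B_{2s}(y_i)\setminus K$, with $\rho$ chosen so that $\sfd_p$ is a $(1,\delta)$-strainer on $\overline{B}_{2\rho}(p)\setminus B_{\rho/2}(p)$ in $T_x\X$.
\begin{enumerate}
\item For large $i$, $\sfd_{y_i}$ is a $(1,\delta)$-strainer on $A_i$. The same strainer-based retraction that underlies Step 1 then gives $A_i\simeq S_s(y_i)$.
\item $B_{2s}(y_i)$ is contractible, $K$ is compact and contractible, and $\X$ is a homology $n$-manifold. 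Poincar\'e--Alexander--Lefschetz duality gives $H_k(B_{2s}(y_i),A_i)\cong\check{H}^{n-k}(K)$, which is $\Z$ for $k=n$ and $0$ otherwise.
\item The exact sequence of the pair yields $\tilde{H}_\ast(S_s(y_i))\simeq\tilde{H}_\ast(A_i)\simeq\tilde{H}_\ast(\mathbb{S}^{n-1})$, independently of $r_{y_i}$.
\end{enumerate}
For $\kappa\le 0$ you could instead use Thurston's acyclicity (Proposition \ref{prop:Thurston}) together with Vietoris--Begle, as in Theorem \ref{theo:hom_mfd_group_infinity}(i). That argument only uses the equivalence part of the statement, so it is not circular.
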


Every topological $n$-manifold is a homology $n$-manifold. The vice versa is false. A  deep result of \cite{LN-finale-18} characterizes proper, geodesically complete, $\CAT(\kappa)$ spaces that are topological manifolds.

\begin{prop}
\label{prop:characterization_top_manifolds}
    Let $(\X,\sfd)$ be a proper, geodesically complete, $\CAT(\kappa)$ space. Then $\X$ is a topological $n$-manifold if and only if $\Sigma_x\X$ is homotopy equivalent to $\mathbb{S}^{n-1}$ for every $x\in \X$.
\end{prop}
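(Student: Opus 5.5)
The statement is essentially \cite[Theorem 1.1 and Proposition 2.3]{LN-finale-18}. The plan is to reduce both implications to the behaviour of small metric spheres, using Proposition \ref{prop:homotopy_type_small_metric_spheres}: for every $x\in\X$ and every $0<r<r_x$, $S_r(x)\simeq\Sigma_x\X$. Moreover $\Sigma_x\X$ is a compact finite-dimensional $\CAT(1)$ space, hence an ANR. By Proposition \ref{prop:spheres_loc_contr_and_loc_path_connected}(i) small spheres are locally contractible compact finite-dimensional spaces, hence ANRs as well. So all spaces involved have the homotopy type of CW complexes, and Whitehead's theorem is available.

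\emph{Direction ($\Leftarrow$).} Assume $\Sigma_x\X\simeq\mathbb{S}^{n-1}$ for every $x$.
\begin{enumerate}
\item Proposition \ref{prop:characterization_hom_manifolds} gives that $\X$ is a homology $n$-manifold.
\item Proposition \ref{prop:homotopy_type_small_metric_spheres} gives $S_r(x)\simeq\mathbb{S}^{n-1}$ for all small $r$.
\item Following \cite{LN-finale-18}, the contraction maps $\phi^{r,s}_x$ and the strainer structure of $\sfd_x$ near $S_r(x)$ (Proposition \ref{prop:fibers_strainer_maps_loc_contr_loc_path_conn}) show that the punctured ball $B_r(x)\setminus\{x\}$ is homotopy equivalent to $S_r(x)$. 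Hence $B_r(x)\setminus\{x\}$ is simply connected for $n\ge3$.
\item So $\X$ is a homology $n$-manifold that is locally contractible (it is $\CAT(\kappa)$) and has the disjoint-disk type local property around each point.
\item The Edwards--Quinn recognition theory is what \cite{LN-finale-18} uses. I would not reprove it, and would quote \cite[Theorem 1.1]{LN-finale-18} at exactly this step.
\end{enumerate}

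\emph{Direction ($\Rightarrow$).} Assume $\X$ is a topological $n$-manifold.
\begin{enumerate}
\item It is then a homology $n$-manifold, so Proposition \ref{prop:characterization_hom_manifolds} gives $H_\ast(\Sigma_x\X)\simeq H_\ast(\mathbb{S}^{n-1})$.
\item By Whitehead's theorem plus Hurewicz, it remains to show that $\Sigma_x\X\simeq S_r(x)$ is simply connected when $n\ge3$. Then a map $\mathbb{S}^{n-1}\to S_r(x)$ representing a generator of $H_{n-1}$ is a homology isomorphism between simply connected CW-type spaces, hence a homotopy equivalence.
\item For simple connectivity, pick a Euclidean chart $U\ni x$. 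Choose radii $r'<r$ with $B_r(x)\subset U$, and a round Euclidean ball $D$ with $B_{r'}(x)\subset D\subset B_r(x)$.
\item A loop in $S_{r'}(x)$ bounds a disk in $D\setminus\{x\}$, since $D\setminus\{x\}\cong \mathbb{S}^{n-1}\times(0,1)$ is simply connected. Push this disk into $S_r(x)$ using the homotopy equivalence of $B_r(x)\setminus\{x\}$ with $S_r(x)$.
\item Composing with the contraction $\varphi^{r,r'}_x$, which is homotopic to a homotopy equivalence $S_r\to S_{r'}$ for small radii by \cite{LN19}, shows that every loop in $S_{r'}(x)$ is null-homotopic.
\end{enumerate}

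\emph{Low dimensions.} Handle these separately. For $n=1$, $\Sigma_x\X$ is a compact homology $0$-manifold with the homology of $\mathbb{S}^0$, so it is two points. For $n=2$, a compact connected homology $1$-manifold with the homology of $\mathbb{S}^1$ is a circle.

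\emph{Main obstacle.} The main obstacle is the homotopy equivalence $B_r(x)\setminus\{x\}\simeq S_r(x)$ used in both directions. No continuous radial retraction exists, since geodesics do not extend continuously. I would first try to build a homotopy inverse from the strainer-fiber retraction $G$ of \cite[Theorem 9.1]{LN19}, applied to the annulus where $\sfd_x$ is a $(1,\delta)$-strainer. If that fails, I would cite \cite[Proposition 2.3]{LN-finale-18} directly.
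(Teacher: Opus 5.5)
Your proposal is correct, and where it matters it coincides with the paper. The paper gives no argument for this proposition: it imports it from \cite[Theorem 1.1 and Proposition 2.3]{LN-finale-18}, using Proposition~\ref{prop:homotopy_type_small_metric_spheres} to pass between $\Sigma_x\X$ and small spheres. Your ($\Leftarrow$) direction ends in exactly this citation.

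Your steps 3--4 there do no work, and step 4 is not justified. Simple connectivity of small punctured balls does not give the disjoint disks property, and DDP-based recognition only applies for $n\ge 5$. So that direction rests entirely on the quoted theorem, as in the paper.

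Your ($\Rightarrow$) argument is an unpacking that the paper does not give, and it is sound. Moreover, the ``main obstacle'' you flag can be avoided there. Do not push the null-homotopy of a loop $\gamma\subset S_{r'}(x)$ outward to $S_r(x)$. Instead, note that by compactness its image lies in $\{\sfd_x\ge s\}$ for some $0<s<r'$. On that set the inward radial projection onto $S_s(x)$, along the geodesics issuing from $x$, is continuous. Composing gives a null-homotopy of $\varphi^{r',s}_x\circ\gamma$ inside $S_s(x)$. So all you need is that contraction maps between small spheres are $\pi_1$-injective, which is the input from \cite{LN19} you already invoke in your last step; no homotopy inverse on the punctured ball is required.

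A minor point: choose $D\supseteq\overline{B}_{r'}(x)$ rather than only $D\supseteq B_{r'}(x)$, so that $S_{r'}(x)\subset D\setminus\{x\}$.
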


The difference between Propositions \ref{prop:characterization_hom_manifolds} and \ref{prop:characterization_top_manifolds} is subtle. A topological space
 $\Y$ such that $H_\ast(\Y) \simeq H_\ast(\mathbb{S}^{n-1})$ is called a homology $(n-1)$-sphere\footnote{\label{footnote:homology_sphere}In literature, it is often required that a homology sphere is a topological manifold; we do not adhere to this convention. In particular, for us, a homology sphere  is not even a homology manifold,  a priori.}; notice that a homology $(n-1)$-sphere {\em is not} homotopy equivalent to $\mathbb{S}^{n-1}$ in general. Namely, a homology $(n-1)$-sphere $\Y$ with the   homotopy type of a CW-complex  is homotopy equivalent to $\mathbb{S}^{n-1}$ if and only if it is simply connected. Indeed, any generator   $[c]\in H_{n-1}(\Y) \simeq \Z$ gives a map $c\colon\mathbb{S}^{n-1} \to \Y$ inducing an isomorphism  $c_\ast\colon H_\ast(\mathbb{S}^{n-1}) \to H_\ast(\Y)$ in every degree, by construction; the conclusion then follows by the homology Whitehead's Theorem, see \cite[Corollary 4.3]{Hatcher2002}. 
The spaces of directions $\Sigma_x\X$ of a proper, geodesically complete $\CAT(\kappa)$ space are compact and $\CAT(1)$, so they have the homotopy type of a CW complex by \cite[Proposition II.5.13]{BH09}. 
Hence, the difference between Proposition \ref{prop:characterization_hom_manifolds} and Proposition \ref{prop:characterization_top_manifolds} relies on the fundamental group of the space of directions. 
We summarize this discussion in the next result.
\begin{prop}
\label{prop:CAT(0)_hom_mfds_to_top_mfds_simply_connected}
    A proper $\CAT(\kappa)$ homology manifold is a topological manifold if and only if every space of directions is simply connected.
\end{prop}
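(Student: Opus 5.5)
The statement follows by combining Proposition \ref{prop:characterization_hom_manifolds}, Proposition \ref{prop:characterization_top_manifolds} and the homology Whitehead argument recalled just above. I would first reduce to geodesic completeness: a proper $\CAT(\kappa)$ homology manifold $\X$ is geodesically complete and purely $n$-dimensional by Lemma \ref{lemma:CAT_homology_are_purely_dimensional}. This places us in the setting of both characterization results.

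For the direction ``topological manifold $\Rightarrow$ simply connected spaces of directions'', suppose $\X$ is a topological $n$-manifold. Proposition \ref{prop:characterization_top_manifolds} then gives that every $\Sigma_x\X$ is homotopy equivalent to $\mathbb{S}^{n-1}$. To conclude that $\Sigma_x\X$ is simply connected I need $n-1\ge 2$, and this low-dimensional issue is the one point needing care. When $n\le 2$, the sphere $\mathbb{S}^{n-1}$ is $\mathbb{S}^0$ or $\mathbb{S}^1$, and the latter is not simply connected. So the statement must be read with $n\ge 3$, or with ``simply connected'' understood as ``$\pi_1$ of each component is that of $\mathbb{S}^{n-1}$''. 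In low dimensions I would handle the equivalence directly, using the classical fact that a homology $1$- or $2$-manifold is a topological manifold; the proof would state this restriction explicitly.

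For the converse, assume $n\ge 3$ and that every $\Sigma_x\X$ is simply connected. Since $\X$ is a homology $n$-manifold, Proposition \ref{prop:characterization_hom_manifolds} gives $H_\ast(\Sigma_x\X)\simeq H_\ast(\mathbb{S}^{n-1})$. Because $\Sigma_x\X$ is compact and $\CAT(1)$, it has the homotopy type of a CW complex by \cite[Proposition II.5.13]{BH09}. Choose a generator of $H_{n-1}(\Sigma_x\X)\simeq\Z$. Hurewicz applies, since the space is simply connected and $\tilde H_k=0$ for $k<n-1$, so this generator is represented by a map $c\colon\mathbb{S}^{n-1}\to\Sigma_x\X$, and $c$ induces isomorphisms on all homology groups. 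The homology Whitehead theorem for simply connected CW complexes then makes $c$ a homotopy equivalence. Proposition \ref{prop:characterization_top_manifolds} now yields that $\X$ is a topological $n$-manifold.

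The only genuine obstacle is the Hurewicz step representing the generator by a sphere, plus the low-dimensional bookkeeping; everything else is a direct citation. I would also note that Borel–Moore and singular homology agree here, because $\Sigma_x\X$ is compact and locally contractible.
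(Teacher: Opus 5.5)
Your proposal is correct and follows the same route as the paper, which gets the statement from the discussion just before it. That discussion combines Propositions \ref{prop:characterization_hom_manifolds} and \ref{prop:characterization_top_manifolds} with the fact that $\Sigma_x\X$ has the homotopy type of a CW complex and the homology Whitehead theorem. Your version is somewhat more careful than the paper's in two respects: you spell out the Hurewicz step needed to represent a generator of $H_{n-1}(\Sigma_x\X)$ by a map $\mathbb{S}^{n-1}\to\Sigma_x\X$, which the paper asserts ``by construction'', and you correctly observe that the ``only if'' direction fails for $n\le 2$ (e.g.\ $\R^2$, where $\Sigma_x\X\cong\mathbb{S}^1$), so the statement implicitly requires $n\ge 3$.
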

 
\begin{obs}
\label{obs:spaceofdirectionnothomeotosphere}
{\em
There exist $\CAT(1)$ homology manifolds $\Y$ (and even topological manifolds!) 
which have points $x$ such that $\Sigma_x\Y$ is 
homotopy equivalent,  but not homeomorphic, to $\mathbb{S}^{n-1}$, see \cite[Theorem 4]{Berestovskii1999}.   To construct a proper, geodesically complete, $\CAT(0)$ example with the same property  it is enough to consider the tangent cone $\X=T_x\Y \cong \textup{Cone}(\Sigma_x\Y)$ at $x$, with $\Y$ as above. The space $\X$ is even homeomorphic to $\mathbb{R}^n$ by \cite[Theorem 1.1]{LN-finale-18}, and  at the vertex $x$ of the cone we have   $\Sigma_x\X\cong\Sigma_x\Y$.
According to Proposition \ref{prop:characterization_hom_manifolds}, in such examples the space of directions $\Sigma_x\X$ is a $(n-1)$-homology manifold, which is homotopy equivalent to $\mathbb{S}^{n-1}$ by Proposition \ref{prop:characterization_top_manifolds}. This then implies that   $\Sigma_x\X$ is not a topological $(n-1)$-manifold, by the  solution of the (generalized) Poincaré conjecture. 
}
\end{obs}


    \begin{obs}{\em 
        There exists also   proper, geodesically complete, $\CAT(0)$ spaces $(\X,\sfd)$ (not homology manifolds) possessing a   point $x\in \X$ such that $\Sigma_x\X$ is
        homotopy equivalent to $\mathbb{S}^2$  
but  not   a homology $2$-manifold.
        This stresses the importance of the assumption that  the whole $\X$ is a homology manifold to   have the last statement in Proposition \ref{prop:characterization_hom_manifolds}.
        
        \noindent An example is produced as follows: let $\Y$ be a contractible, compact, simplicial $2$-complex with no free faces (e.g. the dunce hat, or the Bing's house) and equip it with a $\CAT(1)$ metric $\sfd_\Y$, which always exists by \cite[Theorem 1]{Berestovskii1983}. Since the local homology in degree $2$ is not trivial at every point, the space $(\Y,\sfd_\Y)$ is locally geodesically complete, see \cite[Proposition II.5.12]{BH09}.        
        Now, consider the wedge ${\rm Z} := \Y \vee \mathbb{S}^2$, which is a compact, locally geodesically complete, $\CAT(1)$ metric space. Finally, take $\X := \textup{Cone}({\rm Z})$, and let  $x\in \X$ be the vertex of the cone.  Then, $\X$ is a proper, geodesically complete, purely $3$-dimensional $\CAT(0)$ space with $\Sigma_x\X \cong {\rm Z}$  and, since ${\rm Z}$ retracts on $\mathbb{S}^2$, $\Sigma_x\X$ is homotopy equivalent to $\mathbb{S}^2$. However, $\Sigma_x\X \cong {\rm Z}$ is not a homology $2$-manifold, due to the wedge point.

        This example also shows an interesting fact:  in a proper, geodesically complete $\CAT (0)$ space $X$, the subset   $\textup{Hom}_n(\X)$ of  points $x$  such that $H_\ast(\X,\X \setminus \{x\}) \simeq H_\ast(\mathbb{S}^{n-1})$ is not open, in general.
        Indeed, the vertex $x$ belongs to $\textup{Hom}_2(\X)$ in our example, but the points of the form $(t,z) \in \X=\textup{Cone}({\rm Z})$ with $t>0$ and $z \in {\rm Z}$ the wedge point, are not in $\textup{Hom}_2(\X)$. This phenomenon is related to the statement of Corollary \ref{cor:intro_hom_mfd_not_open}.
        }
    \end{obs}

\subsection{Asymptotic  properties of CAT$(0)$ spaces quasi-isometric to $\R^n$ }
\label{subsec:CAT_QI_R^n}
${}$\\
In this section we state some properties of proper, geodesically complete, $\CAT(0)$ spaces that are quasi-isometric to $\R^n$.

\begin{prop}
	\label{prop:QI_to_R^n_has_biLip_asymptotic_cones}
	Let $(\X,\sfd)$ be a proper, geodesically complete, $\CAT(0)$ space and suppose it is $(L,C)$-quasi-isometric to $\R^n$. Then 
	\begin{itemize}
		\item[(i)] $\X$ admits the cone at infinity $C_\infty\X$ which is moreover $L$-biLipschitz equivalent to $\R^n$. Furthermore, $\partial_T\X$ is a compact $\CAT(1)$ homology $(n-1)$-manifold which is homotopy equivalent to $\mathbb{S}^{n-1}$.
		\item[(ii)] For every $x\in \X$ there exists $R_x > 0$ such that $S_R(x)$ is homotopy equivalent to the Tits boundary $\partial_T\X$,  hence to $\mathbb{S}^{n-1}$, for every $R\ge R_x$.
	\end{itemize}
\end{prop}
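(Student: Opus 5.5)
Let $f\colon \X\to\R^n$ be an $(L,C)$-quasi-isometry. First I would show that $\X$ admits the cone at infinity, using criterion (ii) of Proposition \ref{prop:properties_asymptotic_cone}. Fix $x\in\X$, a sequence $\lambda_i\to 0$ and a non-principal ultrafilter $\omega$. The maps $f$, regarded as maps $(\X,\lambda_i\sfd)\to(\R^n,\lambda_i\sfd_{\R^n})$, are $(L,\lambda_iC)$-quasi-isometries. Rescaled Euclidean spaces are isometric to $\R^n$, so after translating $f(x)$ to $0$ the ultralimit of the targets is $\R^n$. The additive constants $\lambda_i C$ tend to $0$. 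Hence the ultralimit map $f_\omega\colon \omega\text{-}\lim(\X,\lambda_i\sfd,x)\to\R^n$ satisfies $\frac1L d\le |f_\omega-f_\omega|\le L d$, and condition (ii) of the quasi-isometry definition gives surjectivity in the limit. So $f_\omega$ is an $L$-biLipschitz homeomorphism onto $\R^n$. In particular the ultralimit is proper, so by Proposition \ref{prop:properties_asymptotic_cone} the cone at infinity $C_\infty\X$ exists, is isometric to every such ultralimit, is isometric to $\textup{Cone}(\partial_T\X)$, and $\partial_T\X$ is compact $\CAT(1)$. This proves the biLipschitz statement in (i).

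Next I would get the topology of $\partial_T\X$. Since $\textup{Cone}(\partial_T\X)$ is homeomorphic to $\R^n$, it is a topological, hence homology, $n$-manifold. It is proper, geodesically complete and $\CAT(0)$, because $C_\infty\X$ is. I would apply Proposition \ref{prop:characterization_hom_manifolds} at a point $(1,\xi)$ with $\xi\in\partial_T\X$. By Lemma \ref{lemma:tangent_to_section_of_cone}, $\Sigma_{(1,\xi)}C_\infty\X\cong\mathbb{S}^0\ast\Sigma_\xi\partial_T\X$, and this is the suspension of $\Sigma_\xi\partial_T\X$. Its homology being that of $\mathbb{S}^{n-1}$ forces $H_\ast(\Sigma_\xi\partial_T\X)\simeq H_\ast(\mathbb{S}^{n-2})$, by the suspension isomorphism in reduced homology, with a little care in low degrees. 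Applying Proposition \ref{prop:characterization_hom_manifolds} to the compact, geodesically complete $\CAT(1)$ space $\partial_T\X$ then shows that $\partial_T\X$ is a homology $(n-1)$-manifold. Geodesic completeness of $\partial_T\X$ follows because the cone is geodesically complete (Proposition I.5.10 of \cite{BH09}).

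For the homotopy type, $\partial_T\X$ is compact $\CAT(1)$, so it has the homotopy type of a CW complex. It is also homotopy equivalent to small spheres around the vertex $o$ of the cone (Proposition \ref{prop:homotopy_type_small_metric_spheres}, with $\Sigma_o\cong\partial_T\X$). Alternatively, $\partial_T\X\simeq \textup{Cone}(\partial_T\X)\setminus\{o\}$, since the cone minus its vertex is $(0,\infty)\times\partial_T\X$. This space is homeomorphic to $\R^n$ minus a point, which is homotopy equivalent to $\mathbb{S}^{n-1}$. This is the cleanest route and gives the homotopy equivalence directly.

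Finally, (ii) follows immediately from Proposition \ref{prop:homotopy_stability_large_spheres}, since $\X$ admits the cone at infinity, combined with (i). The main subtle point is the ultralimit step, namely checking that the limit map is defined on the whole ultralimit and is surjective. Both follow from the uniform constants, since points at bounded rescaled distance map to points at bounded distance.
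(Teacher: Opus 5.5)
Your proof is correct. The ultralimit step and part (ii) are the same as in the paper; you usefully spell out the surjectivity of $f_\omega$, which the paper leaves implicit. The topological part of (i), however, follows a different route.

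The paper works only at the vertex $o$ of $C_\infty\X\cong\textup{Cone}(\partial_T\X)$. There the space of directions is $\partial_T\X$, with the metric truncated at $\pi$, which does not change the topology. The last clause of Proposition \ref{prop:characterization_hom_manifolds} then gives at once that $\partial_T\X$ is a homology $(n-1)$-manifold. The deep characterization in Proposition \ref{prop:characterization_top_manifolds} gives that it is homotopy equivalent to $\mathbb{S}^{n-1}$.

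For the homology-manifold property, you instead pass through the non-vertex points $(1,\xi)$, Lemma \ref{lemma:tangent_to_section_of_cone} and the suspension isomorphism. You then apply Proposition \ref{prop:characterization_hom_manifolds} a second time, to $\partial_T\X$ itself. This is valid, and you correctly supply the geodesic completeness of $\partial_T\X$ that it needs. It is, however, more roundabout than reading the conclusion off at the vertex.

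For the homotopy type, your argument is genuinely more elementary than the paper's. Since $\textup{Cone}(\partial_T\X)\setminus\{o\}$ is homeomorphic to $(0,\infty)\times\partial_T\X$, the biLipschitz homeomorphism onto $\R^n$ shows that $\partial_T\X$ is homotopy equivalent to $\R^n\setminus\{p\}$, hence to $\mathbb{S}^{n-1}$. This uses neither Proposition \ref{prop:characterization_top_manifolds} nor any CW-complex argument, so your remarks on CW homotopy type and on small spheres around $o$ can be dropped. The same observation even gives the homology-manifold property without Proposition \ref{prop:characterization_hom_manifolds}. It shows that $\R\times\partial_T\X$ is a topological $n$-manifold, and by the K\"unneth formula, local homology of a product with $\R$ is just shifted by one degree.
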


Notice that the  proposition above does not affirm that $\partial_T\X$ is a topological manifold, nor that $S_R(x)$ is a topological or even a homology manifold for big enough $R$, although it is a homology sphere; see footnote \ref{footnote:homology_sphere} and the discussion after Proposition \ref{prop:characterization_top_manifolds}. 

\begin{proof}
	Denote by $f\colon \X \to \R^n$ the $(L,C)$-quasi-isometry. Fix a point $x\in \X$ and a non-principal ultrafilter $\omega$. Consider the maps $f\colon (\X, \frac{1}{i}\sfd, x) \to (\R^n, \frac{1}{i} \sfd_E, f(x)) \cong (\R^n, \sfd_E, f(x))$, that are $(L, C/i)$-quasi-isometries, for $i\in \N$. This sequence of maps induce a well defined ultralimit map of pointed metric spaces $f_\omega \colon \omega\text{-}\lim(\X,\frac{1}{i}\sfd,x) \to \omega\text{-}\lim(\R^n,\frac{1}{i}\sfd_E,f(x)) \cong (\R^n,\sfd_E,0)$ which is $L$-biLipschitz. Therefore, $\omega\text{-}\lim(\X,\frac{1}{i}\sfd,x)$ is proper, so $C_\infty \X$ exists by Proposition \ref{prop:properties_asymptotic_cone} and coincides with $\textup{Cone}(\partial_T\X)$.
    Since $C_\infty\X$ is a topological manifold then $\partial_T\X$, which is isometric to the space of directions at the vertex of the Euclidean cone $C_\infty\X$, is a homology $(n-1)$-manifold which is homotopy equivalent to $\mathbb{S}^{n-1}$, by Propositions \ref{prop:characterization_hom_manifolds} and \ref{prop:characterization_top_manifolds}. The compactness of $\partial_T\X$ comes from Proposition \ref{prop:properties_asymptotic_cone}.
    Finally, (ii) is Proposition \ref{prop:homotopy_stability_large_spheres}.
\end{proof}

\begin{obs}
\label{rmk:2-dim-qi_implies_pure_dimensional}
{\em 
 One might   expect  that a $\CAT(0)$ space $(\X,\sfd)$ as in   Proposition \ref{prop:QI_to_R^n_has_biLip_asymptotic_cones} must be purely $n$-dimensional.   This is   true in dimension $2$.  Indeed, in this case,  the dimension of every point of $\X$ is at most $2$, so if there was a point $x\in \X$ of dimension $1$, then $\Sigma_x\X$ would be disconnected; 
     but this is impossible, since $\partial_T\X$ is connected and there is a $1$-Lipschitz surjective map $f\colon \partial_T\X \to \Sigma_x\X$.
     Therefore every point of $\X$ has dimension $2$.   \\ 
Clearly, this argument does not work for $n > 2$ and this seemingly innocent question is surprisingly hard.}
\end{obs}

The next example shows that, even for proper, geodesically complete, $\CAT(0)$ spaces $(\X,\sfd)$ admitting the cone at infinity, the connectedness of $\partial_T\X$ is not enough to guarantee pure-dimensionality in dimension bigger than $2$. Therefore, if the question in Remark \ref{rmk:2-dim-qi_implies_pure_dimensional} has positive answer, it has to be because of some other properties of the Tits boundary under the assumptions of Proposition \ref{prop:QI_to_R^n_has_biLip_asymptotic_cones}.

\begin{ex}
    There exists a proper, geodesically complete, $\CAT(0)$ space $(\X,\sfd)$ with $\dim(\X)=3$ satisfying the following properties:
    \begin{itemize}
       \item[(i)]  all spaces of directions $\Sigma_x X$  and all metric spheres $S_R(x)$ are connected;
          \item[(ii)] $(\X,\sfd)$ has the cone at infinity  which is purely $3$-dimensional, and $\partial_T\X$ is connected;
        \item[(iii)] $(\X,\sfd)$ is quasi-isometric to a proper, geodesically complete,   purely $3$-dimensional $\CAT(0)$ space (but not to $\R^3$);
         \item[(iv)] but $(\X,\sfd)$ is not purely $3$-dimensional.    
    \end{itemize}
    Let $S = \R \times [0,1]$. We construct $\X$ by gluing a copy of $\R^3$ to each boundary line of $S$ along a line in $\R^3$. The resulting space is $\CAT(0)$, geodesically complete, with $\dim(\X) = 3$ but it has points of dimension $2$. A direct check shows that every space of directions and every metric sphere is connected.    
    Moreover, $(\X,\sfd)$ is quasi-isometric to the gluing of two copies of $\R^3$ by a common line, which is a geodesically complete, purely $3$-dimensional $\CAT(0)$ space.  Since this latter space is a metric cone, it coincides with its cone at infinity, which is also the cone at infinity of $(\X,\sfd)$. The Tits boundary $\partial_T\X$ coincides with the section of this cone, namely the gluing of two $\mathbb{S}^2$ by two points, which is connected. 
    In particular,    $(\X,\sfd)$ is not quasi-isometric to $\R^3$, by Proposition \ref{prop:QI_to_R^n_has_biLip_asymptotic_cones}.(i).
\end{ex}

\vspace{2mm}
\section{More on homology manifolds}

We discuss here orientability and degree of maps between homology manifolds.\\ All the results stated in the first part are well-known for topological manifolds. Since it is hard to find a clean reference for the same results on homology manifolds, we try to give a unifying presentation. In the second part we will relate the topological notion of degree of a Lipschitz map to the analytical notion of its Jacobian by exploiting the rectifiability of $\CAT(1)$ homology manifolds. 

\subsection{Orientability and degree of maps}\label{sec:orientability}
${}$\\
The orientation sheaf of a  homology $n$-manifold $X$ is the sheaf ${\mathcal O}_X$ generated by the presheaf $U \mapsto H_n(U)$,   with stalks  ${\mathcal O}_{X,x} = \varinjlim_{U} H_n (U)$, which coincide with   the groups  $H_n(X, X \setminus \{x\} )$ see \cite[Corollary V.5.11]{bredon-book}.
A system of neighbourhoods for ${\mathcal O}_X$ is given by    the sets
$${\mathcal U} (U, \xi_U) = \{   j_x (\xi_U) \hspace{1mm} \,:\, \hspace{1mm} x \in U \}$$
for $U$ varying among all open subsets of $\X$,    $\xi_U \in H_n (\X, \X\setminus U)$, and where the homomorphism $j^U_x: H_n (\X, \X\setminus U) \to  H_n (\X, \X\setminus  \{ x \})$ is induced by the inclusion.

It is well-known that ${\mathcal O}_\X$ is a locally constant sheaf, that is  for every $x \in \X$ there  there exists $U$ such that  ${\mathcal O}_\X|_U \simeq U \times \Z$; this is usually subsumed by saying that  homology  manifolds are always {\em locally orientable}, see \cite{bredon-locor} and  \cite[Theorem 16.15]{bredon-book} 
. A connected homology manifold $\X$ is called {\em orientable} if the orientation sheaf is constant, that is $\mathcal O_\X \simeq \X \times \Z$;  
an {\em orientation} of $\X$ is a global section $\xi_\X \in \Gamma (\X,  \mathcal O_\X)$  corresponding to a generator of $\Z$.

The orientation sheaf of $\X$ over $\Z/2\Z$ is defined analogously, it is denoted by $\mathcal{O}_\X^{\Z/2\Z}$ and   is always constant, that is $\mathcal O_\X^{\Z/2\Z} \simeq  \X\times \Z/2\Z$; in this case,  if $\X$ is connected, the unique section $\xi_\X^{\Z/2\Z} \in \Gamma(\X,\mathcal{O}_\X^{\Z/2\Z})$ is called the {\em $\Z/2\Z$-orientation} of $\X$.

Homology $n$-manifolds satisfy the following form of Poincar\'e duality (see \cite{borel-poincare}, \cite[Theorem V.9.2]{bredon-book}): 
$$H_k(\X) \cong   H^{n-k} (\X, \mathcal O_\X),\text{ for }  0 \le k \le n$$ 
where the latter  are the  cohomology groups with coefficients in the orientation sheaf \footnote{Recall that the sheaf cohomology groups  $H^k(\X,{\mathcal F})$ coincide  with the \v{C}ech cohomology groups $\check H^k(\X,{\mathcal F})$ on any paracompact space $\X$, and with singular cohomology  for locally constant sheaves with finitely generated stalks (e.g. constant coefficients) when $\X$ is, moreover,  locally contractible (see \cite[Corollary 4.12]{bredon-book}).}.
In particular, since an open subset of a homology $n$-manifold is still a homology $n$-manifold, for any open subset $U \subset \X$, we have  
$$H_n(\X, \X \setminus U) \cong H_n(U) \cong  H^0 (U, \mathcal O_\X) = \Gamma (U, \mathcal O_\X)$$
(where the first isomorphism is obtained by excision, see  \cite[Corollary V.5.9]{bredon-book}, and $\Gamma (U, \mathcal O_\X)$ denotes the continuous sections of  $\mathcal{O}_\X$ over $U$).
As a consequence, a connected  homology $n$-manifold $\X$ is orientable if and only if 
$$   H_n(\X) \cong  H^0 (\X, \mathcal O_\X) = \Gamma (\X, \mathcal O_\X) =\Z .$$
In this case, the constancy of   $\mathcal O_\X$ implies that  
the  homomorphisms  induced by  inclusions
\begin{equation}\label{isomorfismi} 
H_n(\X) \stackrel{j_U}{\longrightarrow} H_n (\X, \X \setminus U) \stackrel{j^U_x}{\longrightarrow} H_n (\X, \X \setminus \{ x \}) \cong \Z
\end{equation}
  are   isomorphisms for every $x$ and every connected open neighbourhood $U$ of $x$. Then,  calling $j_x=  j^U_x \circ j_U$,  the element $j_x (\xi_\X)$ is a generator of  $H_n (\X, \X \setminus \{ x \})$ for every  $x$, denoted $\xi_x$.

For proper maps between oriented  homology $n$-manifolds there is a well-defined notion of {\em degree},  satisfying the usual properties as in the case of maps between  topological manifolds. Indeed, let $(\X, \xi_\X), (\Y, \xi_\Y)$ be oriented, connected, homology $n$-manifolds and let $f\colon \X \to \Y$ be a proper, continuous map: the degree of $f$ is the integer
 $$\textup{deg}(f) = f_\ast (\xi_\X) / \xi_\Y \,.$$ 
 The {\em local degree} of $f$ at  a point $x \in X$ which is an isolated point of $f^{-1}(y)$  is similarly defined as
 $\textup{deg}_x (f) := f_\ast (\xi_x) / \xi_y$, 
 where $f_\ast$ denotes  the homomorphism
 induced\footnote{The homomorphism $f_\ast$ always exists between Borel-Moore homologies with compact supports, and also with closed supports when $f$ is proper, cp. \cite[Proposition V.4.5]{bredon-book}}  between the relative homology groups $H_n(\X, \X \setminus \{x\}) \to H_n(\Y, \Y \setminus \{y\})$.

An analogous notion exists for   proper  maps between homology manifolds which are not necessarily orientable.
 By the Universal Coefficient Theorem, every homology $n$-manifold  satisfies $H_\ast(\X,\X\setminus\{x\},\Z/2\Z) \simeq H_\ast(\R^n,\R^n\setminus\{0\},\Z/2\Z) $ for every $x\in \X$.  
As before,   the inclusions yield  isomorphisms $j_U, j^U_x $ and $j_x= j^U_x \circ j_U$
$$ H_n(\X, \Z/2\Z ) \stackrel{j_U}{\longrightarrow} H_n (\X, \X \setminus U, \Z/2\Z ) \stackrel{j^U_x}{\longrightarrow} H_n (\X, \X \setminus \{ x \}, \Z/2\Z ) \cong \Z/2\Z $$
with $j_x$ sending the $\Z/2\Z$-orientation  $\xi_X^{\Z/2\Z}$ to the generator  $j_x^{\Z/2\Z}$ of  $H_n (\X, \X \setminus \{ x \}, \Z/2\Z)$.\\
Then, the {\em $\Z/2\Z$-degree}  of $f$  is  defined analogously by setting 
$$\deg_2(f)  = f_\ast (\xi_\X^{\Z/2\Z}) / \xi_\Y^{\Z/2\Z} \in   \Z/2\Z$$
while the   local degree at $x \in \X$  is defined as $\textup{deg}_{2,x} (f) := f_\ast (\xi_x^{\Z/2\Z}) / \xi_y^{\Z/2\Z}$, for $y=f(x)$.

\vspace{2mm}
 The following are classical properties of the degree we will need later.

\begin{lemma}
\label{lemma:degree}
Let $f\colon (\X, \xi_\X) \to (\Y, \xi_\Y)$ be a proper, continuous map between connected, oriented  homology $n$-manifolds. Assume that $f^{-1} (y) = \{ x_1,...,x_m \}$ consists of finitely many points. Then 
$$\textup{deg}(f) = \sum_{i=1}^m  \textup{deg}_{x_i} (f).$$ 
The same statement holds for any proper continuous map $f\colon \X \to \Y$ between connected homology $n$-manifolds with the $\Z/2\Z$-degree replacing the degree.
\end{lemma}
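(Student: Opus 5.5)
We follow the classical argument for topological manifolds, working with Borel--Moore homology and the sheaf-theoretic orientation introduced above. Since $f$ is proper and $f^{-1}(y)$ is finite, we first choose pairwise disjoint open neighbourhoods $U_1,\dots,U_m$ of $x_1,\dots,x_m$. Each $U_i$ may be taken connected: homology manifolds are locally path connected by Remark \ref{rmk:BorelMoore_vs_Singular}(i). We also require that $U_i$ contains no other point of $f^{-1}(y)$. Set $U=\bigcup_i U_i$. Then $f$ maps the pair $(\X,\X\setminus f^{-1}(y))$ to $(\Y,\Y\setminus\{y\})$, and we consider the commutative diagram
$$H_n(\X)\to H_n(\X,\X\setminus f^{-1}(y)) \xrightarrow{f_\ast} H_n(\Y,\Y\setminus\{y\}),$$
where $H_n(\Y)\to H_n(\Y,\Y\setminus\{y\})$ is the isomorphism $j_y$ of \eqref{isomorfismi}. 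Naturality of $f_\ast$ with respect to the maps of pairs $(\X,\emptyset)\to(\X,\X\setminus f^{-1}(y))$ and $(\Y,\emptyset)\to(\Y,\Y\setminus\{y\})$ gives $j_y(f_\ast\xi_\X)=f_\ast(\text{image of }\xi_\X)$. Properness of $f$ is what makes $f_\ast$ defined on closed-support homology, as in the footnote.

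The central step is the excision and additivity splitting
$$H_n(\X,\X\setminus f^{-1}(y))\cong H_n(U,U\setminus f^{-1}(y))\cong \bigoplus_{i=1}^m H_n(U_i,U_i\setminus\{x_i\})\cong\bigoplus_{i=1}^m H_n(\X,\X\setminus\{x_i\}).$$
Under this isomorphism, the image of $\xi_\X$ corresponds to $(\xi_{x_1},\dots,\xi_{x_m})$. To justify this, we use excision for Borel--Moore homology (\cite[Corollary V.5.9]{bredon-book}), applied since $f^{-1}(y)$ is a closed set contained in the open set $U$. The direct sum decomposition then follows from additivity over a disjoint union of open sets, and a second excision identifies $H_n(U_i,U_i\setminus\{x_i\})$ with $H_n(\X,\X\setminus\{x_i\})$. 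The components of the image of $\xi_\X$ are $j_{x_i}(\xi_\X)=\xi_{x_i}$, by compatibility of the restriction maps. Equivalently, the same statement can be read off the sheaf-theoretic description: $H_n(\X,\X\setminus K)\cong\Gamma_K$ of the orientation sheaf near the compact set $K=f^{-1}(y)$, and sections over a finite set of points are just tuples of stalk elements.

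Applying $f_\ast$ to each summand, the restriction of $f$ to $(U_i,U_i\setminus\{x_i\})$, composed with excision, is exactly the local map defining $\textup{deg}_{x_i}(f)$, which sends $\xi_{x_i}$ to $\textup{deg}_{x_i}(f)\,\xi_y$. Summing over $i$ gives $\textup{deg}(f)\,\xi_y=j_y(f_\ast\xi_\X)=\sum_i\textup{deg}_{x_i}(f)\,\xi_y$. Since $\xi_y$ generates $H_n(\Y,\Y\setminus\{y\})\cong\Z$, the formula follows. The $\Z/2\Z$ case is identical: replace the coefficients, use the $\Z/2\Z$-orientations, and use the fact that the $\Z/2\Z$ orientation sheaf is always constant, so that the maps $j_x$ are isomorphisms without any orientability assumption.

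The main obstacle is justifying that excision and finite additivity hold for Borel--Moore homology with closed supports in this generality, since the spaces need not be locally contractible, and that the resulting decomposition is natural with respect to proper maps. We would handle this by passing to the orientation sheaf identification $H_n(\X,\X\setminus U)\cong\Gamma(U,\mathcal{O}_\X)$ stated above, where additivity over disjoint opens is immediate. Naturality with respect to $f_\ast$ would then be checked only on the relative groups at points, where the local degrees are defined.
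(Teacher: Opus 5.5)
Your proposal is correct and follows essentially the same route as the paper: isolate the $x_i$ in disjoint connected neighbourhoods $U_i$, use excision and additivity to split $H_n(\X,\X\setminus f^{-1}(y))\cong\bigoplus_i H_n(\X,\X\setminus\{x_i\})$, and chase $\xi_\X$ through the commutative square with $j_y$ to get $\textup{deg}(f)\,\xi_y=\sum_i \textup{deg}_{x_i}(f)\,\xi_y$. You also note that, when $m>1$, $f$ is not a map of pairs $(\X,\X\setminus\{x_i\})\to(\Y,\Y\setminus\{y\})$, so the local degree must be read through $f|_{U_i}\colon (U_i,U_i\setminus\{x_i\})\to(\Y,\Y\setminus\{y\})$; the paper leaves this point implicit.
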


\begin{proof}
We only  give the proof in the oriented case, the formula for $\textup{deg}_2 (f)$ being analogous.
  Let $U_i \subseteq \X$ be connected open subsets such that $U_i \cap f^{-1}(y) = \{ x_i \}$. 
By excision we have  $H_n(\X, \X  \setminus \{ x_i \} ) \simeq H_n(U_i, U_i  \setminus \{ x_i \} )$ and by additivity $H_n(\X, \X \setminus f^{-1}(y)) \simeq \oplus_{i=1}^m H_n(\X, \X  \setminus  \{ x_i \} )$.
Then, the commutative diagram
\vspace{-4mm}

$$\xymatrix{
 \hspace{2mm} 
 \oplus_{i=1}^m H_n(\X, \X \setminus \{ x_i \}) \simeq \Z^m \hspace{2mm}  
   \ar[r]^{\hspace{5mm} f_\ast}   
  & \hspace{4mm} H_n(\Y, \Y \setminus \{ y \} )  \simeq   \Z 
\\
   \hspace{2mm} H_n(\X) \simeq \Z  \hspace{2mm}  
  \ar@{^{}->}[u]_{ \oplus_i j_{x_i}} 
  \ar[r]^{\hspace{5mm} f_\ast}     
  & \hspace{4mm} H_n(\Y)  \simeq \Z  
   \ar@{^{(}->}[u]_{j_y}    
 }
$$
implies that $ \deg (f) \cdot \xi_y = (j_y \circ f_\ast ) (\xi_X) = f_\ast (j_{x_1} (\xi_X), \ldots, j_{x_m} (\xi_X)) = \sum_{i=1}^m \deg_{x_i} (f) \xi_y.$
\end{proof}
  
\begin{obs}
\label{rmk:degree_covering}
{\em  If $f\colon (\X, \xi_\X) \to (\Y, \xi_\Y)$ is a $d$-sheeted covering   between  connected, oriented  homology $n$-manifolds, then $|\textup{deg}(f)| =  d $. 
Indeed,  if $U \subset \X$ is a connected open subset such that $f|_U\colon U \to V=f(U)$ is a homeomorphism, then for every $x \in U$ and $y=f(x)$ we have,  by \eqref{isomorfismi},
$$f_\ast (\xi_x) = f_\ast ( (j^U_x \circ j_U) (\xi_\X)) 
 = (j^V_y \circ j_V) (f_\ast (\xi_\X)) 
 =j^V_y ( \deg (f|_U) \cdot \xi_\Y ) = \deg (f|_U) \cdot \xi_y$$
 which shows that $\deg_x (f) = \deg (f|_U) $ is locally constant on $\X$.  Hence,  $\deg_x (f)$ is constant by connectedness of $\X$,  everywhere equal to $1$ or $-1$ (as $f|_U$ is a a homeomorphism). By the degree formula of Lemma \ref{lemma:degree} it then follows that $\deg (f)= \pm d$. Similarly, if $f\colon \X \to \Y$ is a $d$-sheeted covering between connected homology $n$-manifolds, then
 $\textup{deg}_2(f) \equiv d$ in $\Z/2\Z$.}
\end{obs}

\begin{lemma}
\label{lemma:degree_is_multiplicative}
    Let $f\colon (\X,\xi_\X) \to (\Y,\xi_\Y)$ and $g\colon (\Y,\xi_\Y) \to ({\rm Z}, \xi_{\rm Z})$ be proper, continuous maps between connected, oriented homology $n$-manifolds: then,  $\textup{deg}(g\circ f) = \textup{deg}(g)\cdot \textup{deg}(f)$. \\
    A similar statement holds for the $\Z/2\Z$-degree.
\end{lemma}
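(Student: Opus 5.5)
The plan is to use functoriality of the induced homomorphisms in Borel--Moore homology with closed supports, and to compare the images of the fundamental classes. Since $f$ and $g$ are proper, so is $g\circ f$, and $f_\ast\colon H_n(\X)\to H_n(\Y)$, $g_\ast\colon H_n(\Y)\to H_n({\rm Z})$ are well defined (cp.\ \cite[Proposition V.4.5]{bredon-book}). Functoriality gives $(g\circ f)_\ast = g_\ast\circ f_\ast$.

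By definition of the degree, $f_\ast(\xi_\X)=\deg(f)\,\xi_\Y$ and $g_\ast(\xi_\Y)=\deg(g)\,\xi_{\rm Z}$. Here I use that $H_n(\Y)\cong\Z$ is generated by $\xi_\Y$, and likewise for ${\rm Z}$; this holds because these are connected oriented homology $n$-manifolds, as recalled after Poincaré duality. Then, by linearity of $g_\ast$,
$$(g\circ f)_\ast(\xi_\X)=g_\ast(\deg(f)\,\xi_\Y)=\deg(f)\,\deg(g)\,\xi_{\rm Z},$$
so $\deg(g\circ f)=\deg(g)\deg(f)$.

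For the $\Z/2\Z$-degree the argument is identical. One uses Borel--Moore homology with $\Z/2\Z$ coefficients, together with the facts that $H_n(\,\cdot\,,\Z/2\Z)\cong\Z/2\Z$ for connected homology $n$-manifolds and that this group is generated by the $\Z/2\Z$-orientation. Both facts follow since the $\Z/2\Z$-orientation sheaf is constant, and no orientability is needed.

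The only point requiring care is the functoriality of pushforwards for proper maps in Borel--Moore homology with closed supports, namely that $(g\circ f)_\ast=g_\ast f_\ast$. This is standard in \cite{bredon-book}, since these homomorphisms come from the induced morphisms of sheaves and cosheaves, and these compose. I do not expect any genuine obstacle here.
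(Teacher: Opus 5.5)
Your proof is correct and follows the paper's argument: functoriality $(g\circ f)_\ast=g_\ast\circ f_\ast$ of the pushforward for proper maps in Borel--Moore homology, together with linearity applied to the fundamental classes $\xi_\X,\xi_\Y,\xi_{\rm Z}$, and the same with $\Z/2\Z$ coefficients, where no orientability is needed. Your chain $(g\circ f)_\ast(\xi_\X)=g_\ast(\deg(f)\,\xi_\Y)=\deg(f)\deg(g)\,\xi_{\rm Z}$ is in fact the correctly ordered form of the paper's one-line computation, which as printed puts the classes on the wrong sides of the equalities.
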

\begin{proof}
    By definition, $\textup{deg}(g\circ f) \cdot\xi_\X = \xi_{\rm Z} = \textup{deg}(g)\cdot \xi_\Y = \textup{deg}(g)\cdot \textup{deg}(f)\cdot \xi_\X$.
\end{proof}

The $\Z/2\Z$-degree of any proper map from an orientable manifold to a non-orientable one is zero.  

\begin{lemma}
\label{lemma:map_orientable_to_non_orientable_0_degree}
    Let $f\colon \X  \to \Y$ be a proper, continuous map between   connected, homology $n$-manifolds. Assume $\X $ is orientable and $\Y$ is not: then, $\textup{deg}_2(f) = 0$.
\end{lemma}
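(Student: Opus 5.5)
The plan is to pass to the orientation double cover of $\Y$ and use the degree properties already collected: Remark \ref{rmk:degree_covering}, Lemma \ref{lemma:degree_is_multiplicative}, and the behaviour of $\Z/2\Z$-degree under lifting.

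First I construct the orientation double cover. Since $\Y$ is a connected, non-orientable homology $n$-manifold, the orientation sheaf $\mathcal{O}_\Y$ is locally constant with stalks $\Z$ but not constant. Let $\tilde\Y$ be the subspace of the étalé space of $\mathcal{O}_\Y$ consisting of generators of the stalks. The projection $p\colon\tilde\Y\to\Y$ is then a $2$-sheeted covering. Being locally homeomorphic to $\Y$, the space $\tilde\Y$ is again a homology $n$-manifold. It is connected because $\mathcal{O}_\Y$ is not constant: if $\tilde \Y$ split, a component would be a global section of generators. It is also orientable, since the tautological section (each point of $\tilde\Y$ being itself a local generator) gives a global orientation. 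Finally $p$ is proper, because it is a finite covering.

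Second, I lift $f$. Since $\X$ is orientable, its orientation sheaf is constant, and I want to show that $f^\ast\mathcal{O}_\Y$ is trivial, so that $f$ lifts to $\tilde f\colon\X\to\tilde\Y$ with $p\circ\tilde f=f$. This is the main obstacle, because a general map from an orientable space to a non-orientable one need not lift; for instance, a loop may map onto an orientation-reversing loop. So the lifting approach has to be adjusted. I would first try to argue via the $\Z/2\Z$ transfer instead. Let $\tau\colon H_n(\Y;\Z/2\Z)\to H_n(\tilde\Y;\Z/2\Z)$ be the transfer of the double cover. It sends the $\Z/2\Z$-fundamental class $\xi_\Y^{\Z/2\Z}$ to $\xi_{\tilde\Y}^{\Z/2\Z}$, and $p_\ast\tau$ is multiplication by $2$, which is $0$.

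Third, I would compute $\deg_2(f)$ through local degrees. Take a point $y$ with small connected neighbourhood $V$ evenly covered. By local orientability, $f^{-1}(V)$ inherits two local orientations, pulled back from the two sheets. Since $\X$ is orientable, I compare the global orientation $\xi_\X$ with the local pullback on each component of $f^{-1}(V)$. The cleaner route is the following. Consider the fibre product $P=\X\times_\Y\tilde\Y$, which is a double cover $q\colon P\to\X$ carrying a map $\tilde f\colon P\to\tilde\Y$.

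If $P$ is disconnected, it is two copies of $\X$ and $f$ lifts through $\tilde f$. Then $\deg_2(f)=\deg_2(p)\deg_2(\tilde f)$ by Lemma \ref{lemma:degree_is_multiplicative}, and $\deg_2(p)\equiv 2\equiv 0$ by Remark \ref{rmk:degree_covering}.

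If $P$ is connected, I would use naturality of transfer: $\tilde f_\ast\circ\tau_q=\tau_p\circ f_\ast$. On $\X$, which is orientable with $q$ a double cover, the transfer sends $\xi_\X^{\Z/2\Z}$ to $\xi_P^{\Z/2\Z}$. On the other hand, $P$ is the pullback of the orientation cover of $\Y$. With $\Z$ coefficients, the integral class $\xi_\X$ lifts via the integral transfer to a class in $H_n(P;\Z)$. This class maps under $\tilde f_\ast$ to an integral class, say $k\,\xi_{\tilde\Y}$. Composing with $p_\ast$, and using that $p_\ast\xi_{\tilde\Y}$ is an integral $n$-class in $H_n(\Y;\Z)=0$ (because $\Y$ is non-orientable), I get $0=2 f_\ast\xi_\X$ integrally. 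Reducing mod $2$ requires care. Instead, I would reduce the integral fundamental class of $\X$ mod $2$ to $\xi_\X^{\Z/2\Z}$. Then $f_\ast\xi_\X$ lies in $H_n(\Y;\Z)=0$, and naturality of reduction mod $2$ gives $f_\ast\xi_\X^{\Z/2\Z}=\rho(f_\ast\xi_\X)=0$.

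This last argument is in fact the whole proof, and it makes the covering detour unnecessary. The only genuine inputs are two facts. The first is that $H_n(\Y;\Z)=\Gamma(\Y,\mathcal{O}_\Y)=0$ for non-orientable connected $\Y$, via the Poincaré duality isomorphism recalled above. The second is that the mod-$2$ reduction of an integral orientation of $\X$ is the $\Z/2\Z$-orientation, which can be checked on the stalks.
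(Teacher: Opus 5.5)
Your final argument is correct and is exactly the paper's proof: reduce the integral orientation $\xi_\X$ mod $2$ to get $\xi_\X^{\Z/2\Z}$, push it forward using naturality of the coefficient map $H_n(\cdot;\Z)\to H_n(\cdot;\Z/2\Z)$, and use $H_n(\Y;\Z)\cong\Gamma(\Y,\mathcal{O}_\Y)=0$ for non-orientable $\Y$. The paper phrases the naturality through the commutative diagram of the exact sequences $H_n(\cdot;\Z)\otimes\Z/2\Z\to H_n(\cdot;\Z/2\Z)\to 0$. As you note yourself, the orientation-cover and transfer detour before it is unnecessary and can be dropped.
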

\begin{proof}
By \cite[Sec. V.3, eq.(13)]{bredon-book},
we have  exact sequences
$$\xymatrix{
    H_n(\X,\Z)\otimes \Z/2\Z 
    \ar[r]^{\hspace{3mm} r_X}  \ar@<+2ex>@{->}[d]_{f_\ast}    
  &  H_n(\X,\Z/2\Z) 
    \ar[r]^{} 
     \ar@{^{}->}[d]_{f_\ast}   
  &      0  
\\
        H_n(\Y,\Z)\otimes\Z/2\Z  
    \ar[r]^{\hspace{3mm}r_Y}               
  &  H_n(\Y,\Z/2\Z) 
    \ar[r]^{}
   & 0  
}
$$
where  $H_n(\Y,\Z)=0$ since  $\Y$ is non-orientable.
As $r_X$ sends any orientation $\xi_{\X}$ of $\X$  to the generator of $H_n(\X,\Z/2\Z) \simeq \Z/2\Z$, we deduce that 
$f_\ast  (\xi_X^{\Z/2\Z}) = (r_Y \circ f_\ast) (\xi_X ) =0$.
\end{proof}

\subsection{Degree, Jacobian and Area Formula}${}$\\
Let $n\in \N$. A metric space $(\X,\sfd)$ is $n$-rectifiable if there exist countably many biLipschitz maps $f_i\colon A_i\subseteq \R^n \to \X$ such that $\mathcal{H}^n(\X\setminus\bigcup_{i\in \N}f_i(A_i)) = 0$. The next statement presents the only example of rectifiable spaces which is relevant for us.
\begin{prop}[{\cite[Theorem 1.2]{LN19}}]
\label{prop:CAT_are_rectifiable}
    Let $(\X,\sfd)$ be a proper, geodesically complete, purely $n$-dimensional, $\CAT(1)$ space. Then there exists a subset $M\subseteq \X$ which is locally biLipschitz equivalent to $\R^n$ and such that $\mathcal{H}^n(\X\setminus M) = 0$. In particular, $(\X,\sfd)$ is $n$-rectifiable.
\end{prop}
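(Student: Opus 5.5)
The plan is to take the first part from \cite{LN19} and to derive the rectifiability from it by a short covering argument. Recall the structure theory of \cite{LN19} for a proper, geodesically complete $\CAT(1)$ space of pure dimension $n$. Let $M$ be the set of points $x$ admitting an $(n,\delta)$-strainer for a fixed small $\delta>0$. Concretely, this means there are points $p_1,\dots,p_n$ near $x$ whose directions at $x$ are $\delta$-spherical and pairwise almost orthogonal. This is the $n$-dimensional analogue of the $(1,\delta)$-strainers recalled above.

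The first step is to show that $M$ is locally biLipschitz to $\R^n$. Around every $x\in M$ one considers the strainer map
$$F=(\sfd_{p_1},\dots,\sfd_{p_n})\colon B_\varepsilon(x)\to\R^n.$$
This map is $\sqrt{n}$-Lipschitz trivially. Its lower biLipschitz bound comes from the first-variation formula: the differential of $F$ at nearby points is, up to error $O(\delta)$, an isometric projection of $T_y\X$ onto $\R^n$. Moreover, pure $n$-dimensionality forces $T_y\X$ to be close to $\R^n$ there, since $\dim(y;\X)=n$ and the strainer already spans $n$ directions. Hence $F$ is open, injective and $(1+C\delta)$-biLipschitz onto an open subset of $\R^n$, which gives the local chart. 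Openness of $M$ follows from the stability of strainers under small perturbations of the point, as in \cite[Theorem 13.1]{LN19}.

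The second step, which I expect to be the main obstacle, is to show $\mathcal{H}^n(\X\setminus M)=0$. For this I would invoke the dimension estimate of \cite{LN19}: the set of points admitting no $(n,\delta)$-strainer has Hausdorff dimension at most $n-1$. The idea behind it is that at such a point the tangent cone is not $\R^n$. A covering and induction argument on strainer number then shows that balls around non-strained points have $\mathcal{H}^n$-density going to zero, which upgrades to measure zero via the density theorem. This is the genuinely deep part, and here I would simply cite \cite[Theorem 1.2]{LN19} rather than reprove it.

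Finally, the rectifiability follows directly. $\X$ is proper, hence separable, and so is the open set $M$. By the Lindel\"of property, $M$ is covered by countably many chart domains $B_{\varepsilon_i}(x_i)$. The inverses $f_i=F_i^{-1}\colon A_i=F_i(B_{\varepsilon_i}(x_i))\subseteq\R^n\to\X$ are biLipschitz, and their images cover $M$. Since $\mathcal{H}^n(\X\setminus M)=0$, this exhibits $\X$ as $n$-rectifiable.
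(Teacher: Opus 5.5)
Your proposal is correct and matches the paper. The paper gives no argument of its own beyond citing \cite[Theorem 1.2]{LN19} for the open, locally biLipschitz set $M$ with $\mathcal{H}^n(\X\setminus M)=0$, and your Lindel\"of covering argument is exactly the implicit step giving $n$-rectifiability; the heuristic details you add (the $(1+C\delta)$ constant, the density argument in place of the direct bound $\dim_H(\X\setminus M)\le n-1$) are inessential, since you defer the deep part to that citation anyway.
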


The theory of Jacobian of Lipschitz maps on rectifiable spaces is developed in \cite{AmbrosioKirchheim2000}. Firstly, they consider two metric spaces $(\X,\sfd_\X)$, $(\Y,\sfd_\Y)$ that are subsets of duals of separable Banach spaces $V^\ast$ and $W^\ast$ respectively. 
Namely, let $V, W$ be   separable Banach spaces, let $(\X,\sfd_\X) \subseteq (V^*, \Vert \cdot \Vert_{V^*})$ be a $n$-rectifiable metric space and let 
$f: (\X,\sfd_\X) \to (W^\ast, \Vert \cdot \Vert_{W^\ast})$
be a Lipschitz map. Then, for $\mathcal{H}^n$-a.e. $x\in \X$ there exists a suitable $n$-dimensional vector subspace of $V^*$ called the {\em approximate tangent space} at $x$ and denoted by $\textup{Tan}(x,\X)$ (see  \cite[\S 5]{AmbrosioKirchheim2000} for the properties of this space). The dual $W^*$   of a separable Banach space $W$ carries the norm metric $\Vert\cdot\Vert_{W^*}$ and also, for any choice of a dense subset $\{w_i\}_{i\in \N}$  of $W$,  the metric
$$\sfd_{W^*}(T,T') := \sum_{i=0}^\infty 2^{-i}\langle T-T', w_i\rangle,$$
inducing the weak-$\ast$ topology. 
Then, \cite[Theorem 8.1]{AmbrosioKirchheim2000} gives that for $\mathcal{H}^n$-a.e. $x\in \X$ there exist a linear and weak-$\ast$ continuous map $A\colon V^* \to W^*$ and a Borel subset $S_x\subseteq \X$ such that 
\begin{equation}
    \label{eq:differential_Lip_map_rectifiable}
    \lim_{y \in \X\setminus S_x\to x} \frac{\sfd_{W^*}(f(y), f(x) + A(y-x))}{\Vert y-x \Vert_{V^*}} = 0,
\end{equation}
where $S_x$ has the property that for every $v \in \textup{Tan}(x,\X) \cap \{\Vert \cdot \Vert_{V^*} =1\}$ there exists a sequence $\{y_i\} \in \X  \setminus  S_x$ such that $y_i\to x$ and $\frac{y_i-x}{\Vert y_i - x \Vert}$ converges weakly-$\ast$ to  $v$  \cite[Proposition 5.7]{AmbrosioKirchheim2000}.\\
The restriction of $A$ to $\textup{Tan}(x,\X)$ is uniquely determined and it is denoted by $${\mathbf{d}}_xf \colon \textup{Tan}(x,\X) \to W^* \;.$$ The Jacobian of $f$ at $x$ is defined as the number
$$\mathbf{J}f(x) := \frac{\omega_n}{\mathcal{H}^n(\{v\in \textup{Tan}(x,\X)\,:\, \Vert {\mathbf{d}}_xf(v) \Vert_{W^*}\le 1\})},$$
where $\omega_n$ denotes the $\mathcal{H}^n$-measure   of the unit ball $\mathbf{B}_{1}$  in $(\textup{Tan}(x,\X), \Vert \cdot \Vert_{V^*})$
(which coincides with the volume of the unit ball
in the Euclidean space $\R^n$  by \cite[Lemma 6]{Kirchheim1994}).

\begin{obs}{\em
\label{remark:Jacobian_Lipschitz}
    In the situation above, if $f$ is $L$-Lipschitz then $0\le \mathbf{J}f(x) \le L^n$. Indeed, using the definition of $\sfd_{W^*}$ and \eqref{eq:differential_Lip_map_rectifiable} we get that
    \begin{equation*}
        \begin{aligned}
            \lim_{y \in \X\setminus S_x\to x} \sum_{i=0}^\infty 2^{-i}\left\langle A\left(\frac{y-x}{\Vert y - x \Vert_{V^*}}\right), w_i \right\rangle &= \lim_{y \in \X\setminus S_x\to x} \sum_{i=0}^\infty 2^{-i}\left\langle \frac{f(y)-f(x)}{\Vert y - x \Vert_{V^*}}, w_i \right\rangle \\
            &\le  \lim_{y \in \X\setminus S_x\to x} \sum_{i=0}^\infty 2^{-i}L\left\langle \frac{f(y)-f(x)}{\Vert f(y) - f(x) \Vert_{W^*}}, w_i \right\rangle \\
            &\le  \lim_{y \in \X\setminus S_x\to x} \sum_{i=0}^\infty 2^{-i}L \Vert w_i\Vert_{W}
        \end{aligned}
    \end{equation*}
Choosing $v \in  \mathbf{B}_{1}$ 
such that $\Vert A(v) \Vert_{W^*} = \Vert {\mathbf{d}}_xf \Vert$, there exist
points $y \in \X\setminus S_x \to x$ such that $\Vert A(\frac{y-x}{\Vert y - x \Vert_{V^*}} )\Vert$ converges in the weak-$\ast$ sense to $A(v)$, the property of the set $S_x$ recalled above.
Applying the weak-$\ast$ convergence we get
    $$\sum_{i=0}^\infty 2^{-i}\Vert {\mathbf{d}}_xf \Vert\Vert w_i\Vert_{W} \le \sum_{i=0}^\infty 2^{-i}L \Vert w_i\Vert_{W},$$
    which implies that $\Vert {\mathbf{d}}_xf \Vert \le L$. 
 Then, the $\Vert \cdot \Vert_{V^*}$-ball  of radius  $\frac{1}{L}$ in $\textup{Tan}(x,\X)$ satisfies
 $$\mathbf{B}_{1/L} \subseteq\{v\in \textup{Tan}(x,\X)\,:\, \Vert {\mathbf{d}}_xf(v) \Vert_{W^*}\le 1\},$$
    so 
    $$\mathcal{H}^n(\{v\in \textup{Tan}(x,\X)\,:\, \Vert df_x(v) \Vert_{W^*}\le 1\}) \ge \mathcal{H}^n( \mathbf{B}_{1/L}) = \frac{1}{L^n}\omega_n.$$
   Therefore, the definition of Jacobian gives $0\le \mathbf{J}f(x)\le L^n$.
    }
\end{obs}

 To define  the Jacobian in the general case of metric spaces which are not subsets of Banach spaces, we proceed as follows. Let $f\colon(\X,\sfd_\X) \to(\Y,\sfd_\Y)$ be a Lipschitz map between separable metric spaces and assume that $(\X,\sfd)$ is $n$-rectifiable. Consider the Kuratowski embeddings $\iota_\X\colon \X \to \ell^\infty$ and $\iota_\Y\colon \Y \to \ell^\infty$, and the induced map $\tilde{f}:=\iota_\Y\circ f\circ\iota_\X^{-1}\colon \iota_\X(\X) \to \iota_\Y(\Y)\subseteq \ell^\infty$. 
 Then, we define $\mathbf{J}f(x) := \mathbf{J}\tilde{f}(\iota_\X(x))$.

Even if the definition of Jacobian is extrinsic, it  has an intrinsic nature that is highlighted by the area-formula.

\begin{prop}[{\cite[Theorem 8.2]{AmbrosioKirchheim2000}}]
\label{prop:area_formula}
    Let $f\colon(\X,\sfd_\X) \to (\Y,\sfd_\Y)$ be an $L$-Lipschitz map and assume that $(\X,\sfd_\X)$ is $n$-rectifiable. Then
    $$\int_\X g(x)\, \mathbf{J}f(x)\,\d\mathcal{H}^n(x) = \int_\Y \sum_{x\in f^{-1}(y)}g(x)\,\d\mathcal{H}^{n}(y)$$
    for every Borel function $g\colon \X \to \R$. In particular, $\int_\X \mathbf{J}f(x)\,\d\mathcal{H}^n(x) = \int_\Y \#f^{-1}(y)\,\d\mathcal{H}^{n}(y)$. \\Moreover, $0\le \mathbf{J}f(x) \le L^n$.
\end{prop}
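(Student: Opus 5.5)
The plan is to reduce to the Euclidean area formula for Lipschitz maps from subsets of $\R^n$ into metric spaces, which is due to Kirchheim \cite{Kirchheim1994}.

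\textbf{Reduction to $\ell^\infty$.} First I would pass to the Kuratowski embeddings. Since $\ell^\infty=(\ell^1)^*$ with $\ell^1$ separable, both $\iota_\X(\X)$ and $\iota_\Y(\Y)$ sit isometrically in the dual of a separable Banach space. Isometries preserve $\mathcal{H}^n$, preserve $n$-rectifiability and preserve fibre cardinalities. By definition $\mathbf{J}f=\mathbf{J}\tilde f\circ\iota_\X$, so it suffices to prove the formula for $\tilde f$. The bound $0\le \mathbf{J}f\le L^n$ is exactly Remark \ref{remark:Jacobian_Lipschitz}, so only the integral identity remains. By the monotone class theorem it is enough to treat $g=\chi_B$ for Borel $B\subseteq\X$, and then general nonnegative Borel $g$ by monotone convergence. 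Signed $g$ are handled by splitting into positive and negative parts, whenever either side is finite.

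\textbf{Decomposition into Euclidean pieces.} Using rectifiability, I write $\X=N\cup\bigsqcup_i E_i$ with $\mathcal{H}^n(N)=0$ and $E_i=f_i(A_i)$, where $A_i\subseteq\R^n$ are Borel and $f_i$ are biLipschitz. The sets are made disjoint by subtracting previous pieces, which keeps them Borel.

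On $N$ both sides vanish. The left side vanishes because $\mathcal{H}^n(N)=0$. For the right side, $f$ is $L$-Lipschitz, so $\mathcal{H}^n(f(N))\le L^n\mathcal{H}^n(N)=0$, and the counting function $y\mapsto\#(f^{-1}(y)\cap N)$ is supported in $f(N)$.

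By countable additivity on the left and monotone convergence on the right, it then suffices to prove the identity with $\X$ replaced by a single $E_i$. On $E_i$ I pull back to $A_i$ and compare two Euclidean area formulas.
\begin{itemize}
\item For the biLipschitz map $f_i$: $\mathcal{H}^n\llcorner E_i=(f_i)_\#(J f_i\,\mathcal{L}^n\llcorner A_i)$, where $Jf_i$ is the Jacobian of the metric differential $\mathrm{md}f_i$.
\item For the composition $h_i=\tilde f\circ f_i\colon A_i\to\ell^\infty$: $\int_{A_i}\chi_B(f_i(a))\,J h_i(a)\,da=\int\#(h_i^{-1}(y)\cap f_i^{-1}(B))\,d\mathcal{H}^n(y)$, by Kirchheim's formula.
\end{itemize}
Injectivity of $f_i$ turns the right-hand side of the second formula into the fibre count of $f$ on $E_i\cap B$.

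\textbf{Chain rule (main obstacle).} The identity on $E_i$ then reduces to the chain rule
$$\mathbf{J}f(f_i(a))\cdot Jf_i(a)=Jh_i(a)\quad\text{for a.e. } a\in A_i .$$
This is the step I expect to be hardest. It requires identifying the Ambrosio--Kirchheim Jacobian, defined through the weak-$*$ differential $\mathbf{d}_xf$ on $\textup{Tan}(x,\X)$ as in \eqref{eq:differential_Lip_map_rectifiable}, with the ratio of metric Jacobians.

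I would proceed as follows.
\begin{itemize}
\item At a.e. $a$, the weak-$*$ differentials $w^*\text{-}d f_i(a)$ and $w^*\text{-}dh_i(a)$ exist, and their norms equal the metric differentials (Kirchheim, Ambrosio--Kirchheim).
\item $\textup{Tan}(f_i(a),\X)$ is the image of $\R^n$ under $w^*\text{-}df_i(a)$, which is injective because $f_i$ is biLipschitz.
\item By uniqueness of the differential along $\textup{Tan}$, guaranteed by the property of the exceptional sets $S_x$, we get $w^*\text{-}dh_i(a)=\mathbf{d}_{f_i(a)}\tilde f\circ w^*\text{-}df_i(a)$.
\item Since the Jacobian is the ratio of the volume of a Euclidean unit ball to the volume of the unit ball of the pulled-back seminorm, the Jacobians multiply.
\end{itemize}
A secondary technical point is the Borel measurability of $y\mapsto\#f^{-1}(y)\cap B$. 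I would handle it by the standard argument: fibre counts are monotone limits of finite sums of indicators of images $f(B\cap Q)$ over finer and finer partitions into Borel pieces $Q$, and such images are analytic, hence $\mathcal{H}^n$-measurable.
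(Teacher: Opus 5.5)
Your proposal is correct, but it does far more than the paper, whose proof is just a citation. Since $\mathbf{J}f$ is defined as $\mathbf{J}\tilde f\circ\iota_\X$, \cite[Theorem 8.2]{AmbrosioKirchheim2000} applies directly to $\tilde f\colon \iota_\X(\X)\subseteq(\ell^1)^*\to\ell^\infty$. The bound $0\le \mathbf{J}f\le L^n$ is Remark~\ref{remark:Jacobian_Lipschitz}, exactly as you say.

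You instead rederive Theorem 8.2 itself from Kirchheim's Euclidean area formula. You use biLipschitz charts $E_i=f_i(A_i)$, apply the metric area formula to $f_i$ and to $h_i=\tilde f\circ f_i$, and reduce everything to the chain rule $\mathbf{J}f(f_i(a))\,Jf_i(a)=Jh_i(a)$. This is essentially Ambrosio--Kirchheim's own passage from their Theorem 8.1 to Theorem 8.2.

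Your bookkeeping is sound, including the Jacobian computation. It works because $\mathcal{H}^n$ on the normed space $\textup{Tan}(x,\X)$ is a Haar measure giving the unit ball mass $\omega_n$. Hence $\mathcal{H}^n(T(E))=Jf_i(a)\,\mathcal{L}^n(E)$ for $T=w^*\text{-}df_i(a)$, and the Jacobians multiply as you claim.

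Your route has the merit of isolating the chain rule as the only non-Euclidean input. However, that step needs more about the exceptional set $S_x$ than the paper records. To identify $w^*\text{-}dh_i(a)$ with $\mathbf{d}_x\tilde f\circ w^*\text{-}df_i(a)$, you must approach $x=f_i(a)$ in each direction $w^*\text{-}df_i(a)(u)$ through points of the same chart $E_i$ lying outside $S_x$. The property quoted in the paper gives, for each unit $v\in\textup{Tan}(x,\X)$, some sequence in $\X\setminus S_x$ approaching $x$ in direction $v$. It says nothing about which chart those points lie in.

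What you need is the density form $\mathcal{H}^n(S_x\cap B_r(x))=o(r^n)$, which is how $S_x$ is controlled in \cite[Theorem 8.1]{AmbrosioKirchheim2000}. With it, $f_i^{-1}(S_x)$ has Lebesgue density zero at $a$, since $f_i$ is biLipschitz. Then at density points of $A_i$ you can pick $a'\in A_i\setminus f_i^{-1}(S_x)$ with $a'\to a$ and $(a'-a)/|a'-a|\to u$, which closes the argument.

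A minor point: you assume the $A_i$ are Borel, which the paper's definition of rectifiability does not require. This is harmless in the paper's applications, where the charts from Proposition~\ref{prop:CAT_are_rectifiable} can be taken compact.
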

\begin{proof}
    The first statement is {\cite[Theorem 8.2]{AmbrosioKirchheim2000}},  the second one follows by Remark \ref{remark:Jacobian_Lipschitz}.
\end{proof}

Clearly, for any map $f$ between open subsets of $\R^n$ which is differentiable at a point $x$, the  differential $\mathbf{d}_x f$ and the Jacobian $\mathbf{J}f(x)$ defined above coincide with the usual notions in $\R^n$. In particular,  the following lemma holds.

\begin{lemma}\label{lemmagradoloc} Let $f:U \subset \R^n \to  V\subset \R^n$ be a Lipschitz map between open sets, and let $x$ be a point  
where $f$ is differentiable such that  $\mathbf{J}f (x)> 0$. Then, $x$ is an isolated point of $f^{-1}(y)$ and $|\textup{deg}_x(f)|=1$.
The same is true for the  $\Z/2\Z$-degree.
\end{lemma}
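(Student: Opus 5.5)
The plan is to use the inverse function theorem for Lipschitz maps that are differentiable at a single point. Let $A = \mathbf{d}_x f$ be the differential at $x$. Since $\mathbf{J}f(x) > 0$, $A$ is invertible, so there is $\lambda > 0$ with $|Av| \ge \lambda |v|$ for all $v$. Differentiability at $x$ gives
$$f(x+h) = f(x) + Ah + o(|h|).$$
Hence there is $\rho > 0$ such that
$$|f(x+h) - f(x) - Ah| \le \tfrac{\lambda}{2}|h| \qquad \text{for } |h| \le \rho.$$
It follows that $|f(x+h) - y| \ge \tfrac{\lambda}{2}|h| > 0$ for $0 < |h| \le \rho$, where $y = f(x)$. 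So $x$ is the only preimage of $y$ in $\overline{B}_\rho(x)$, which proves that $x$ is an isolated point of $f^{-1}(y)$.

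For the local degree I will build a linear homotopy on the closed ball $D = \overline{B}_\rho(x)$. Set
$$H_t(z) = (1-t)\,f(z) + t\,\bigl(y + A(z-x)\bigr), \qquad t \in [0,1].$$
Writing $z = x+h$ with $|h| = \rho$, we get
$$H_t(z) - y = Ah + (1-t)\bigl(f(z) - y - Ah\bigr),$$
and so $|H_t(z) - y| \ge \lambda\rho - \tfrac{\lambda}{2}\rho > 0$. Thus $H_t$ never sends $\partial D$ to $y$. I will use that the local degree equals the degree of the induced map
$$H_n(D, D\setminus\{x\}) \cong H_n(D,\partial D) \to H_n(\R^n, \R^n \setminus \{y\}).$$
This identification holds by excision and because $y$ has no other preimage in $D$. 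The induced map is invariant under homotopies of pairs $(D,\partial D) \to (\R^n, \R^n\setminus\{y\})$. Hence $\deg_x(f) = \deg_x\bigl(y + A(\cdot - x)\bigr)$, and the latter is $\operatorname{sign}\det A = \pm 1$ for an invertible affine map. Since $f$ may move points near $x$ out of $V$, I will take $\rho$ small enough that $H_t(D)$ stays inside a ball around $y$ contained in $V$. This is possible because the image has size $O(\rho)$ and is centred at $y$; after that, excision lets me replace $V$ by $\R^n$.

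The $\Z/2\Z$ case follows from the same homotopy argument with $\Z/2\Z$ coefficients. Alternatively, the integral local degree reduces mod $2$ to the $\Z/2\Z$ one, which gives $\deg_{2,x}(f) = 1$.

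The only delicate point is the justification that the local degree as defined in the paper, via $f_\ast\colon H_n(\X,\X\setminus\{x\}) \to H_n(\Y,\Y\setminus\{y\})$, is homotopy invariant under homotopies that keep $\partial D$ away from $y$. I expect this to follow by factoring through $H_n(D, D\setminus\{x\}) \to H_n(D, \partial D)$ using excision, in the standard way.
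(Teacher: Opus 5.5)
Your proof is correct and follows essentially the same route as the paper. Invertibility of $A=\mathbf{d}_xf$ together with differentiability gives $|f(x+h)-y|\ge \frac{\lambda}{2}|h|$, hence isolation of $x$, and the straight-line homotopy from $f$ to its affine linearization $y+A(\cdot-x)$ avoids $y$, so $\textup{deg}_x(f)=\textup{deg}_x\bigl(y+A(\cdot-x)\bigr)=\pm1$, with the same argument over $\Z/2\Z$. The differences are only cosmetic: you require $H_t\neq y$ only on $\partial D$ and pass through $H_n(D,\partial D)$ and excision to $\R^n$, while the paper checks $h_t\neq y$ on the whole punctured ball and works inside a convex neighbourhood $V_y$; also, your affine map correctly uses $y$ where the paper's $g$ has the typo $x$.
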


\begin{proof}
Since $x$ is a point where $f$ is differentiable, the determinant of $df_x$ coincides with the Jacobian $\mathbf{J}f(x)$; moreover, since $\mathbf{J}f(x)> 0$, this implies that there exists $\lambda>0$ such that 
\begin{equation}\label{eqexpanding}
\Vert df_x (v) \Vert \geq \lambda \Vert v \Vert \hspace{5mm}\mbox{ for every } v \in \R^n.
\end{equation}
  In particular, $\frac{1}{\lambda}f$ is expanding in a neighbourhood of $x$,
hence $f$ is proper on $B_r(x)-\{ x \}$ for $r$ small enough and $x$ is the only preimage of $y$ in  $B_r(x)$. Let now $g(x'):=x+(df)_x (x'-x)$, let $V_y$ be a convex neighbourhood of $y$,   and choose $0<\varepsilon < \lambda$ and a possibly smaller $r>0$ such that $B_r(x) \cap f^{-1}(y)=\{ x\}$, 
$f(B_r(x))\cup g(B_r(x))  \subset V_y$ and
$\Vert f(x')-g(x') \Vert <\varepsilon \Vert x'-x \Vert$ for all $x' \in B_r(x)$.
Then, the maps $f$ and $g$ are properly homotopic as maps of pairs $\left(B_r(x),B_r(x) \setminus \{x\} \right) \to \left(V_y,V_y \setminus \{y \} \right)$.
Actually,  calling $h_t= tf+(1-t)g$,  we have that
$h_t(x') \neq y$ for all $x' \in B_r(x) \setminus \{ x\}$ 
(in fact,  setting $\Delta(x') = f(x')-g(x')$,   by the differentiability of $f$ at $x$ we have 
$\sfd(h_t(x'), y)=\Vert (d_x f)(x'-x)  + t  \Delta(x') \Vert \geq (\lambda - t \varepsilon) \Vert x' -x\Vert  >0$ for all $x' \in B_r(x) \setminus \{ x\}$, by \eqref{eqexpanding}).
It follows that $f$ and $g$ induce the same map   in relative homology   $H_n \left(U,U\setminus \{ x\}\right)  = H_n \left(B_r(x),B_r(x) \setminus \{ x\}\right) \to H_n\left(V_y, V_y \setminus \{y \}\right) =   H_n \left(V,V \setminus \{ y \} \right)$ (with coefficients $\Z$ or $\Z/2\Z$), hence $\textup{deg}_x (f) = \textup{deg}_x (g)$  and $\textup{deg}_2 (f) = \textup{deg}_2 (g)$ as well. But the degree of $g$  is clearly $\pm 1$ as $g$  is a linear isomorphism.
\end{proof}

The next one is a generalization of classical results to $\CAT(1)$ homology manifolds.

\begin{prop}
\label{prop:degree_area_formula_CAT}
    Let $f\colon (\X,\sfd_\X) \to (\Y,\sfd_\Y)$ be a Lipschitz map between two compact, connected, $\CAT(1)$, homology $n$-manifolds. 
    Then:
    \begin{itemize}
        \item[(i)] $\textup{deg}_2(f) \equiv \#f^{-1}(y)$ in $ \Z/2\Z$, for $\mathcal{H}^n$-a.e. $y\in \Y$;
        \item[(ii)] if $\X$ and $\Y$ are orientable then $\vert\textup{deg}(f)\vert \le \#f^{-1}(y)$ for $\mathcal{H}^n$-a.e. $y\in \Y$. 
    \end{itemize}
\end{prop}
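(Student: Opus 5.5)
The plan is to reduce to the Euclidean statement of Lemma \ref{lemmagradoloc} on a full-measure set of regular points, and then to apply the degree formula of Lemma \ref{lemma:degree}.

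\textbf{Regular charts.} By Lemma \ref{lemma:CAT_homology_are_purely_dimensional}, both $\X$ and $\Y$ are geodesically complete and purely $n$-dimensional. Proposition \ref{prop:CAT_are_rectifiable} then gives open subsets $M_\X\subseteq\X$ and $M_\Y\subseteq \Y$ that are locally biLipschitz to $\R^n$ and have full $\mathcal{H}^n$-measure. (The sets are open because they consist of points with a neighbourhood biLipschitz to an open set of $\R^n$.)

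\textbf{Excluding bad values of $y$.} We discard the following sets of values.
\begin{itemize}
\item[(a)] $f(\X\setminus M_\X)$. It is $\mathcal{H}^n$-null because $f$ is Lipschitz and $\mathcal{H}^n(\X\setminus M_\X)=0$.
\item[(b)] $\Y\setminus M_\Y$.
\item[(c)] The image of the points where, read in charts, $f$ is not differentiable or has zero Jacobian. Rademacher's theorem in charts shows the non-differentiability set is null, so its image is null. For the zero-Jacobian set $Z$, the area formula (Proposition \ref{prop:area_formula}) with $g=\chi_Z$ gives $\int_\Y \#(f^{-1}(y)\cap Z)\,\d\mathcal{H}^n=0$, so $f(Z)$ is null.
\item[(d)] The values with $\#f^{-1}(y)=\infty$. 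Since $\int_\Y\#f^{-1}(y)\,\d\mathcal{H}^n=\int_\X\mathbf{J}f\,\d\mathcal{H}^n\le L^n\mathcal{H}^n(\X)<\infty$, this set is null.
\end{itemize}

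\textbf{Local degrees at a good value.} Fix a value $y$ outside (a)--(d). Its fiber $f^{-1}(y)=\{x_1,\dots,x_m\}$ is finite, each $x_i$ lies in $M_\X$, and $y\in M_\Y$.

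We first check that the Jacobian is compatible with charts. Choose biLipschitz charts $\psi_i\colon U_i\to\R^n$ around $x_i$ and $\psi\colon V\to\R^n$ around $y$, and set $F_i=\psi\circ f\circ\psi_i^{-1}$. Differentiability of $F_i$ and nonvanishing of its Jacobian at $\psi_i(x_i)$ correspond to positivity of $\mathbf{J}f(x_i)$. This holds up to a further null set, using that the Ambrosio--Kirchheim Jacobian transforms under biLipschitz charts by the Jacobians of the charts, which are positive a.e.

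Lemma \ref{lemmagradoloc} applied to $F_i$ gives $\vert\textup{deg}_{x_i}(F_i)\vert=1$, and also $\textup{deg}_{2,x_i}=1$. Local degrees are invariant under the homeomorphisms $\psi_i,\psi$, via excision of the relative groups $H_n(\X,\X\setminus\{x_i\})\cong H_n(U_i,U_i\setminus\{x_i\})$ and the analogous identification at $y$. Hence $\vert\textup{deg}_{x_i}f\vert=1$.

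\textbf{Summing.} Continuous maps between compact spaces are proper, so Lemma \ref{lemma:degree} applies. For (i) it gives $\textup{deg}_2(f)=\sum_i 1\equiv m$ in $\Z/2\Z$. For (ii) it gives $\vert\textup{deg}(f)\vert=\vert\sum_i\pm1\vert\le m$.

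\textbf{Main obstacle.} I expect the hardest step to be the chart-compatibility claim: that for a.e.\ $x$, the condition $\mathbf{J}f(x)>0$ in the Ambrosio--Kirchheim sense is equivalent to the Euclidean chart map having nonzero Jacobian, and that the exceptional sets really have null image. To prove it, I would apply the area formula to $\psi\circ f\circ\psi_i^{-1}$ and compare with the area formula applied to $f$ and to the charts. Both formulas compute the same multiplicity integral, so their Jacobians agree up to the a.e.\ positive chart Jacobians. Alternatively, one can avoid the comparison entirely by working directly with the Euclidean Jacobian of $F_i$ and using the area formula in $\R^n$ to show the critical-value set is null.
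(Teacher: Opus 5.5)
Your proposal is correct and follows the paper's proof essentially step by step: you take full-measure biLipschitz charts from Proposition~\ref{prop:CAT_are_rectifiable}, discard the images of the non-differentiability and zero-Jacobian sets via Rademacher and the area formula, get local degrees $\pm 1$ from Lemma~\ref{lemmagradoloc}, and sum them with Lemma~\ref{lemma:degree}. Your explicit a.e.\ finiteness of fibres and your attention to whether the Ambrosio--Kirchheim Jacobian matches the Jacobian of the chart maps fill in points the paper passes over: the paper obtains finiteness implicitly from isolatedness and compactness, and silently identifies the two Jacobians when passing to charts.
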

\begin{proof}
    By Lemma \ref{lemma:CAT_homology_are_purely_dimensional}, the spaces $\X, \Y$ are purely $n$-dimensional and geodesically complete.
    Let $M_\X\subseteq \X$, $M_\Y\subseteq \Y$ be the subsets locally biLipschitz to open subsets of $\R^n$ provided by Proposition \ref{prop:CAT_are_rectifiable}. Since $f$ is Lipschitz we have $\mathcal{H}^n(\X\setminus M_\X) = 0$. The Jacobian $\mathbf{J}f(x)$ is a Borel  function $\X\to [0,+\infty)$ defined $\mathcal{H}^n$-a.e.. We apply Proposition \ref{prop:area_formula} with $g=\chi_A$ to obtain $\mathcal{H}^n(f(A)) = 0$. Therefore, for $\mathcal{H}^n$-a.e. $y$  in $\Y$ we can suppose that $y \in M_\Y$, every $x\in f^{-1}(y)$ belongs to $M_\X$, satisfies $\mathbf{J}f(x) \neq 0$ and is a differentiable point for $f$, by Rademacher's Theorem. 
    By taking biLipschitz charts for $M_\X$ and $M_\Y$ around the points $x$ and $y$ respectively, we may assume that $f:U \to V$ is a Lipschitz map between connected open subsets of $\R^n$, and that $f$ is differentiable at $x$ with   $\mathbf{J}f(x) \neq 0$.
    Moreover, by Lemma \ref{lemmagradoloc} we know that $x$ is an isolated point of  $f^{-1}(y)$ and that $ \vert \textup{deg}_{2,x}(f)\vert = 1$.
   Combining this property with Lemma \ref{lemma:degree} we get that   for $\mathcal{H}^n$-a.e. $y\in \Y$ we have
    $$\textup{deg}_2(f) = \sum_{x_i\in f^{-1}(y)} \textup{deg}_{2,x_i}(f) = \#f^{-1}(y) \textup{ mod } \Z/2\Z.$$
    If $\X,\Y$ are orientable, we deduce as before that for $\mathcal{H}^n$-a.e. $y\in \Y$ we have  $\vert \textup{deg}_{x}(f)\vert = 1$ for all $x \in f^{-1}(y)$,  hence for such $y$'s it holds 
    $$\vert\textup{deg}(f)\vert = \left\vert \sum_{x_i\in f^{-1}(y)}\textup{deg}_{x_i}(f) \right\vert \le \sum_{x_i\in f^{-1}(y)}\vert \textup{deg}_{x_i}(f) \vert \le  \#f^{-1}(y).$$ 
\end{proof}

A straightforward combination of Propositions \ref{prop:area_formula} and \ref{prop:degree_area_formula_CAT} gives the next result.

\begin{cor}
\label{cor:degree_volume_inequality}
     Let $f\colon (\X,\sfd_\X) \to (\Y,\sfd_\Y)$ be a $1$-Lipschitz map between two compact, connected, $\CAT(1)$, oriented,
$n$-homology manifolds. Then
     $$\mathcal{H}^n(\X) \ge \vert \textup{deg}(f)\vert \cdot \mathcal{H}^n(\Y).$$
\end{cor}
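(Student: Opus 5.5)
The plan is to combine the area formula of Proposition \ref{prop:area_formula} with the counting estimate of Proposition \ref{prop:degree_area_formula_CAT}(ii). Since $f$ is $1$-Lipschitz, Proposition \ref{prop:area_formula} gives a Jacobian $\mathbf{J}f(x)$ with $0\le \mathbf{J}f(x)\le 1$ for $\mathcal{H}^n$-a.e. $x\in\X$. Here $\X$ is $n$-rectifiable: it is compact, hence proper, and by Lemma \ref{lemma:CAT_homology_are_purely_dimensional} it is geodesically complete and purely $n$-dimensional, so Proposition \ref{prop:CAT_are_rectifiable} applies. Applying the area formula with $g\equiv 1$ then yields
$$\mathcal{H}^n(\X)\ \ge\ \int_\X \mathbf{J}f(x)\,\d\mathcal{H}^n(x)\ =\ \int_\Y \#f^{-1}(y)\,\d\mathcal{H}^n(y).$$

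Next, since $\X$ and $\Y$ are oriented (in particular orientable), compact, connected $\CAT(1)$ homology $n$-manifolds, Proposition \ref{prop:degree_area_formula_CAT}(ii) gives $\#f^{-1}(y)\ge \vert\textup{deg}(f)\vert$ for $\mathcal{H}^n$-a.e. $y\in\Y$. The map $f$ is proper because $\X$ is compact, so its degree is well defined. Integrating this pointwise inequality over $\Y$ gives
$$\int_\Y \#f^{-1}(y)\,\d\mathcal{H}^n(y)\ \ge\ \vert\textup{deg}(f)\vert\cdot\mathcal{H}^n(\Y).$$
Chaining the two displays proves the corollary.

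The only point needing care is measurability and finiteness. The function $y\mapsto \#f^{-1}(y)$ is $\mathcal{H}^n$-measurable as part of the conclusion of the area formula, and all quantities are finite because $\mathcal{H}^n(\X)<\infty$ for a compact, purely $n$-dimensional $\CAT(1)$ space. This holds since such a space is metrically doubling at small scales with $n$-dimensional volume growth, as in \cite{LN19}. I do not expect any real obstacle, because the substantive work, namely the local degree computation at regular points, is already contained in Proposition \ref{prop:degree_area_formula_CAT}.
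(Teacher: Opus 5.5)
Your proposal is correct and is exactly the paper's argument: the paper obtains the corollary as a direct combination of the area formula (Proposition \ref{prop:area_formula} with $g\equiv 1$ and $\mathbf{J}f\le 1$ for a $1$-Lipschitz map) and the almost-everywhere bound $\vert\textup{deg}(f)\vert\le \#f^{-1}(y)$ of Proposition \ref{prop:degree_area_formula_CAT}(ii). Your remarks on rectifiability via Lemma \ref{lemma:CAT_homology_are_purely_dimensional} and Proposition \ref{prop:CAT_are_rectifiable} only make explicit what the paper leaves implicit. The finiteness discussion is harmless but unnecessary, since the inequality is trivial when $\mathcal{H}^n(\X)=\infty$.
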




\vspace{2mm}
\section{The proof of Theorems  \ref{theo:intro_qi_cocompact} and \ref{theo:intro_qi_implies_biLip}}
\label{sec:Theorems_Nagano_and_cocompact}

We recall the statement of Theorem \ref{theo:intro_qi_cocompact}, in the cocompact case.

\begin{T2}
    Let $(\X,\sfd)$ be a proper, geodesically complete, $\CAT(0)$ space such that $\Isom(\X,\sfd)$ acts cocompactly on $\X$. If $(\X,\sfd)$ is quasi-isometric to $\R^n$, then $(\X,\sfd)$ is isometric to $\R^n$.
\end{T2}

\begin{proof}
	Since $(\X,\sfd)$ is quasi-isometric to $\R^n$ then its Tits boundary $\partial_T\X$ is compact, by Proposition \ref{prop:QI_to_R^n_has_biLip_asymptotic_cones}. Therefore, \cite[Proposition 7]{Bosche2011} gives the thesis.
\end{proof}

Since \cite{Bosche2011} is not published, we propose an alternative proof.

\begin{proof}[Alternative proof of Theorem \ref{theo:intro_qi_cocompact}]
	Set $G:=\Isom(\X,\sfd)$ and fix a non-principal ultrafilter $\omega$. Consider the group $G_\omega$    acting by isometries on the geodesically complete, $\CAT(0)$ space $C_\omega \X$.\\ This action is transitive, being the limit of cocompact actions with codiameter going to $0$. By Proposition \ref{prop:QI_to_R^n_has_biLip_asymptotic_cones}, $C_\omega \X$ is proper and it is a topological manifold.
	Therefore, by \cite[Theorem 3]{Berestovskii1989}, $C_\omega \X$ is a smooth Lie group whose metric coincides with a sub-Finsler Carnot-Caratheodory metric. Since the 
    metric is $\CAT(0)$, then it is indeed Finsler.
    This is due to the fact that the Hausdorff dimension of a sub-Finsler Carnot-Caratheodory metric that is not Finsler is strictly larger than the topological dimension of the Lie group, while the Hausdorff dimension of a geodesically complete, $\CAT(0)$ metric coincides with the topological dimension, by \cite[Theorem 1.1]{LN19}. A Finsler manifold which is $\CAT(0)$ is indeed Riemannian by \cite[Theorem 1.2]{BuckleyFalkWraith2009}. Hence, since $C_\omega \X$ is biLipschitz to $\R^n$, we deduce that $C_\omega \X$ has to be an abelian Lie group by \cite[Theorem A]{Pauls2001}. Since it is $\CAT(0)$, hence simply connected, $C_\omega \X$ is isometric to $\R^n$. This implies that the Tits boundary $\partial_T\X$ is isometric to $\mathbb{S}^{n-1}$. \\
    From this, one immediately infers that   $\X$ is flat,  by induction on $n$, as follows. \\
If $n=1$, then $\partial_T\X \cong \mathbb{S}^0$ and  one gets that $\X = \R$ by geodesic completeness.\\
If $n>1$  we have that $\mathbb{S}^{n-1} \cong \mathbb{S}^0 \ast \mathbb{S}^{n-2}$, where $\ast$ denotes the spherical join (see comment after Lemma \ref{lemma:cones_and_joins}); then, by Lemma \ref{lemma:cones_and_joins},  $\X = \X_1\times \X_2$ where each factor is again a proper, geodesically complete  $\CAT(0)$ space,  and  $\partial_T\X_1 = \mathbb{S}^0$ and $\partial_T\X_2 = \mathbb{S}^{n-2}$. Then,  we deduce  by induction   that $\X_1 \cong \R$ and $\X_2 \cong \R^{n-1}$, hence $\X \cong \R \times \R^{n-1} \cong \R^n$.
\end{proof}

We remark that if one assumes that $(\X,\sfd)$ has a cocompact group of isometries without fixed points at infinity then the conclusion follows directly from \cite{AB97}.

\begin{prop}
	Let $(\X,\sfd)$ be a proper, geodesically complete, $\CAT(0)$ space. Suppose that there exists a group of isometries $G < \Isom(\X,\sfd)$ which is cocompact and does not fix points on $\partial_T\X$. If $\partial_T\X$ is compact, e.g. if $(\X,\sfd)$ is quasi-isometric to some $\R^n$, then $\X$ is flat.
\end{prop}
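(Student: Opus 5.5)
The plan is to reduce the statement to the flat-splitting results of Adams--Ballmann \cite{AB97}, using compactness of the Tits boundary to exclude the non-flat alternatives. The key input is \cite[Corollary C]{AB97} (and the surrounding results there). For a proper $\CAT(0)$ space admitting a cocompact isometric action, either the group fixes a point of $\partial\X$, or $\X$ contains a flat, or $\partial_T\X$ has large radius / non-compact structure. More precisely, I would use the following form: if $\X$ is a proper, geodesically complete, cocompact $\CAT(0)$ space and $G$ does not fix a point at infinity, then either $\X$ is flat or $\X$ contains a point at infinity whose Tits neighbourhood is not compact, or $\partial_T\X$ has infinite diameter. Since we assume $\partial_T\X$ is compact, the latter alternatives should be excluded. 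In the quasi-isometric case, compactness of $\partial_T\X$ is given by Proposition \ref{prop:QI_to_R^n_has_biLip_asymptotic_cones}.

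\textbf{Step 1.} Recall that compactness of $\partial_T\X$ implies that $\partial_T\X$ has finite diameter and is a finite-dimensional compact $\CAT(1)$ space. By Proposition \ref{prop:properties_asymptotic_cone} it also implies that $\X$ admits the cone at infinity $\textup{Cone}(\partial_T\X)$.

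\textbf{Step 2.} Invoke the Adams--Ballmann dichotomy: if $G$ acts cocompactly without a global fixed point at infinity (amenable case), then $\X$ splits as $\R^k\times \X'$ where the Euclidean factor is $G$-invariant. Alternatively, I would use the Ballmann--Buyalo/Papasoglu--Swenson type fact: for a cocompact proper geodesically complete $\CAT(0)$ space, if $\partial_T\X$ has diameter $\le\pi$... The cleanest route is \cite[Corollary C]{AB97}: a cocompact space with no fixed point at infinity whose Tits boundary has circumradius $<\pi/2$... Concretely, I would argue as follows. Let $\X=\R^k\times\X'$ be the de Rham-type splitting with $\X'$ having no Euclidean factor. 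If $\X'$ is not a point, geodesic completeness and cocompactness give rank-one isometries or a higher-rank structure in $\X'$. Rank-one elements yield Tits-isolated boundary points (Tits distance $>\pi$ to everything else). This produces infinitely many Tits components, via the dynamics of the $G$-orbit of an isolated endpoint, contradicting compactness of $\partial_T\X$.

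\textbf{Step 3.} Handle the higher-rank, irreducible case of $\X'$. Here Ballmann--Brin / Leeb imply that $\X'$ is a symmetric space or Euclidean building of rank $\ge2$. Its Tits boundary is a spherical building with infinitely many chambers, and the building's thickness makes it non-compact in the Tits topology, which is again a contradiction.

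\textbf{Obstacles.} The main obstacle is Step 2/3: making the ``rank rigidity'' input rigorous in this generality (only geodesic completeness and cocompactness of $G$, not discreteness). I would first try to use the no-fixed-point hypothesis directly with \cite{AB97}, since that result is designed for exactly this case, and fall back on rank rigidity only if necessary. Once $\X'$ is a point, $\X=\R^k$, hence flat.
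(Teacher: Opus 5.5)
Your proposal has a genuine gap: the hypothesis that makes \cite{AB97} applicable is that $G$ be \emph{amenable}, and you never establish it. Adams--Ballmann's Theorem A says that an amenable group of isometries of a proper $\CAT(0)$ space either fixes a point of $\partial\X$ or leaves a flat invariant. The ``dichotomy'' you attribute to \cite{AB97} for an arbitrary cocompact group without fixed points at infinity (flat, or a Tits point with non-compact neighbourhood, or infinite Tits diameter) is not a result proved there. The parenthetical ``(amenable case)'' in Step 2 is left unjustified.

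Your fallback in Steps 2--3 is essentially the Rank Rigidity Conjecture of Ballmann--Buyalo: an irreducible, geodesically complete, cocompact $\CAT(0)$ space either admits rank-one isometries or is a higher-rank symmetric space or Euclidean building. This is known only in special classes (e.g.\ Hadamard manifolds, $2$-dimensional complexes, $\CAT(0)$ cube complexes). It is open for general proper, geodesically complete $\CAT(0)$ spaces, and all the more so for non-discrete $G$. Step 3 also presupposes that $\partial_T\X'$ is a thick spherical building, which is exactly what rank rigidity would have to provide before Leeb's characterization can be invoked. So neither of your routes closes as written.

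The missing idea is that compactness of $\partial_T\X$ yields amenability directly. By Proposition \ref{prop:properties_asymptotic_cone}, (iii)$\Leftrightarrow$(iv), compactness of $\partial_T\X$ means that $\X$ is metrically doubling. Since $G$ acts cocompactly (pass to its closure in $\Isom(\X,\sfd)$ if needed, which still fixes no point at infinity), $G$ is quasi-isometric to $\X$. Its growth is then controlled by covering numbers of balls of $\X$, hence is polynomial, so $G$ is amenable. Now \cite[Theorem A]{AB97} gives a $G$-invariant flat $F$, because the fixed-point alternative is excluded by hypothesis. Cocompactness places $\X$ in a bounded neighbourhood of $F$, and geodesic completeness forces $\X=F$. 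Indeed, for $y\notin F$ with projection $p$ on $F$, extend the geodesic from $p$ to $y$ beyond $y$ to a ray. By convexity of $\sfd(\cdot,F)$ along this ray, the distance to $F$ grows linearly, a contradiction. This is the route taken in the paper, and it needs no rank rigidity, no de Rham-type splitting and no analysis of Tits components.
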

\begin{proof}
	Since $\partial_T\X$ is compact, then $\X$ is metrically doubling by Proposition \ref{prop:properties_asymptotic_cone}. This implies that $G$ has polynomial growth, so it is amenable. Since $G$ has no fixed points at infinity then \cite[Theorem A]{AB97} implies that $\X$ is flat.
\end{proof}

Let us now turn to the proof of Theorem \ref{theo:intro_qi_implies_biLip}.

\begin{T1}
    Given $\varepsilon \! > \! 0$ and $n \in \N$, there exists $\delta \! = \!\delta(\varepsilon, n) \!>\! 0$ such that the following holds.
	Let $(\X,\sfd)$ be a proper, geodesically complete, $\CAT(0)$ space.
    \begin{itemize}[leftmargin=7mm]
    \item[(i)] If $(\X,\sfd)$ is $(1+\delta, C)$-quasi-isometric to $\R^n$, then  $(\X,\sfd)$
      is $(1+\varepsilon)$-biLipschitz homeomorphic to $\R^n$.
    \item[(ii)] Moreover, if $(\X,\sfd)$ is $(1, C)$-quasi-isometric to $\R^n$, then $(\X,\sfd)$ is isometric to $\R^n$.
    \end{itemize}
\end{T1}

In order to prove it,
we recall the definition of the $n$-asymptotic volume of a proper, geodesically complete, $\CAT(0)$ space $(\X,\sfd)$ introduced in \cite{Nagano2022}. It is the quantity
$$\textup{as-vol}_n(\X) := \limsup_{R\to +\infty} \frac{\mathcal{H}^n(B_R(x))}{\omega_n R^n} \in [0,+\infty],$$ 
where $\omega_n$ is the $n$-dimensional volume of the ball of radius $1$ in $\R^n$ and $x$ is a point of $\X$ (notice that $\textup{as-vol}_n(\X)$ does not depend on the choice of the point $x$, by the triangular inequality).
Nagano proved in \cite[Proposition 3.5]{Nagano2022} that, if  the asymptotic cone $C_\infty \X$ exists, then  
\begin{equation}
\label{eqasvol-nagano}
\textup{as-vol}_n(\X) = \frac{\mathcal{H}^{n-1}(\partial_T\X)}{\mathcal{H}^{n-1}(\mathbb{S}^{n-1})} \end{equation}
and that if   $\X$ is purely $n$-dimensional and $\textup{as-vol}_n(\X)$ is close to $1$, then $(\X,\sfd)$ is bi-Lipschitz equivalent to $\R^n$. 
Actually, looking at his proof, it is enough to assume that $\partial_T\X$ is compact and purely $(n-1)$-dimensional.
So, we state his result in this form.

\begin{theo}[{\cite[Theorem 1.1]{Nagano2022}}]
	\label{theo:Nagano_biLip}
	For every $\varepsilon > 0$ and   $n\in \N$ there exists $\delta_0 \!=\! \delta_0(\varepsilon,n) > 0$ with the following property. If $(\X,\sfd)$ is a proper, geodesically complete, $\CAT(0)$ space with $\textup{as-vol}_n(\X) \le 1+\delta_0$ and such that $\partial_T\X$ is compact and purely $(n-1)$-dimensional, then $(\X,\sfd)$ is $(1+\varepsilon)$-biLipschitz equivalent to $\R^n$.
\end{theo}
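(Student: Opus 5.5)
The plan is to follow Nagano's argument in \cite{Nagano2022}. At each stage we only check that compactness and pure $(n-1)$-dimensionality of $\partial_T\X$ are what is used, not pure $n$-dimensionality of $\X$. We argue in three stages: rigidity at infinity, propagation to all points and scales, and construction of a global chart.

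\emph{Stage 1: almost-spherical boundary.} Since $\partial_T\X$ is compact, Proposition \ref{prop:properties_asymptotic_cone} shows that $C_\infty\X\cong\textup{Cone}(\partial_T\X)$ exists and is a proper, geodesically complete $\CAT(0)$ space. Hence $\partial_T\X$ is a compact, geodesically complete $\CAT(1)$ space, and by hypothesis it is purely $(n-1)$-dimensional. By \eqref{eqasvol-nagano} we get $\mathcal{H}^{n-1}(\partial_T\X)\le(1+\delta_0)\mathcal{H}^{n-1}(\mathbb{S}^{n-1})$. We then invoke the volume rigidity for such $\CAT(1)$ spaces: the volume is always at least $\mathcal{H}^{n-1}(\mathbb{S}^{n-1})$, with equality only for the round sphere. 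A contradiction and compactness argument (volume is continuous under GH-convergence in this class, by \cite{LN19}) then shows that $\partial_T\X$ is $\varepsilon'$-GH-close to $\mathbb{S}^{n-1}$, with $\varepsilon'\to0$ as $\delta_0\to0$. In particular $C_\infty\X$ is GH-close to $\R^n$ on the unit ball.

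\emph{Stage 2: almost Euclidean at every point and scale.} For $x\in\X$ we use the Bishop--Gromov-type monotonicity valid in geodesically complete $\CAT(0)$ spaces: the ratio $r\mapsto\mathcal{H}^n(B_r(x))/(\omega_nr^n)$ is nondecreasing. This follows from the $\tfrac rR$-Lipschitz surjective contraction maps $\phi^{R,r}_x$. The ratio is bounded above by $\textup{as-vol}_n(\X)\le1+\delta_0$.
\begin{itemize}
\item At a point of dimension $n$, the ratio is at least $1$ at small scales, so it is squeezed in $[1,1+\delta_0]$ at every scale.
\item To rule out points of dimension $\neq n$, we use the $1$-Lipschitz surjection $\partial\log_x\colon\partial_T\X\to\Sigma_x\X$. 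Since $\partial_T\X$ is close to $\mathbb{S}^{n-1}$, $\Sigma_x\X$ cannot have dimension $\ge n$. A lower-dimensional $\Sigma_x\X$ is also excluded, because a volume-pinched sphere cannot be mapped $1$-Lipschitz onto a geodesically complete $\CAT(1)$ space of smaller dimension without violating the monotonicity comparison at a nearby $n$-dimensional point.
\end{itemize}
With the ratio pinched, a second contradiction argument shows that every rescaled ball $(B_r(x),r^{-1}\sfd,x)$ is $\varepsilon'$-GH-close to the unit Euclidean ball, uniformly in $x$ and $r$. We take ultralimits of counterexamples and apply the equality case to the limit cone.

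\emph{Stage 3: global chart.} We pick $n$ pairs of points $\xi_i^\pm\in\partial_T\X$ that are nearly antipodal and nearly orthonormal, using the closeness of $\partial_T\X$ to $\mathbb{S}^{n-1}$. We then define $F=(b_{\xi_1^+},\dots,b_{\xi_n^+})\colon\X\to\R^n$ from the Busemann functions. At each point $x$ and scale, Stage 2 together with the stability of strainers \cite[Theorem 13.1]{LN19} should show two things: the directions $\partial\log_x(\xi_i^\pm)$ form an almost orthonormal frame, and the differential of $F$ is almost an isometry. Integrating along geodesics then gives the $(1+\varepsilon)$-biLipschitz bounds. Properness of $F$ and invariance of domain (or a degree argument) give surjectivity.

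We expect the main obstacle to be Stage 2, specifically the uniformity of the almost-Euclidean control at every point. This is exactly where Nagano uses pure $n$-dimensionality of $\X$, and here it must be replaced by the $1$-Lipschitz surjections $\partial\log_x$ from the pure-dimensional, almost-spherical $\partial_T\X$. Our first approach is to transfer the volume pinching of $\partial_T\X$ directly to each $\Sigma_x\X$ via $\partial\log_x$, and then to $\X$ by integrating over spheres.
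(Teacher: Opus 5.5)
The paper gives no argument beyond citing Nagano and asserting that his proof uses only compactness and purity of $\partial_T\X$. So everything rests on the one place where purity of $\partial_T\X$ must replace purity of $\X$: excluding points of dimension $<n$. That is the second bullet of your Stage 2, and also the ``first approach'' of your last paragraph. The mechanism you propose there cannot work, because every volume quantity you invoke is blind to lower-dimensional strata. If $\dim\Sigma_x\X<n-1$, then $\mathcal{H}^{n-1}(\Sigma_x\X)=0$, so transferring the volume pinching through $\partial\log_x$ yields nothing. Likewise, monotonicity of $\mathcal{H}^n(B_r(\cdot))/(\omega_n r^n)$ at $n$-dimensional points is perfectly compatible with lower-dimensional points, which need not even be near any $n$-dimensional point. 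For instance, glue a half-plane to $\R^n$, $n\ge 3$, along a line. The result is proper, geodesically complete and $\CAT(0)$, with $\textup{as-vol}_n=1$ and volume ratio identically $1$ at every $n$-dimensional point, yet it has $2$-dimensional points. The only hypothesis it violates is purity of $\partial_T\X$, which is a round $\mathbb{S}^{n-1}$ with a half-circle attached at two antipodal points. So a valid argument must use the almost-roundness of $\partial_T\X$ from Stage 1 metrically, not through volumes.

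The missing idea is that the spherical structure of $\partial_T\X$, rather than its volume, passes through $\partial\log_x$. From Stage 1, choose $\xi_1^\pm,\dots,\xi_n^\pm\in\partial_T\X$ such that:
\begin{itemize}
\item $\sfd_T(\xi_i^+,\eta)+\sfd_T(\eta,\xi_i^-)<\pi+\delta$ for all $\eta\in\partial_T\X$;
\item $\sfd_T(\xi_i^\pm,\xi_j^\pm)<\pi/2+\delta$ for $i\neq j$.
\end{itemize}
For $x\in\X$ and $w\in\Sigma_x\X$, use surjectivity to write $w=\partial\log_x(\eta)$. Since $\partial\log_x$ is $1$-Lipschitz, the directions $v_i^\pm:=\partial\log_x(\xi_i^\pm)$ satisfy $\angle_x(v_i^+,w)+\angle_x(w,v_i^-)<\pi+\delta$ and $\angle_x(v_i^\pm,v_j^\pm)<\pi/2+\delta$. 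Thus each $v_i^+$ is $\delta$-spherical with opposite $v_i^-$, and points far out on the rays from $x$ to the $\xi_i^+$ form an $(n,\delta)$-strainer at $x$. By \cite{LN19} this forces $\dim(x;\X)\ge n$. Together with $\dim\X=\dim C_\infty\X=n$, this gives pure $n$-dimensionality. The same computation gives, at every point and with the same ideal points, the almost orthonormal frame you need in Stage 3, so the ultralimit part of Stage 2 becomes unnecessary.

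A smaller point concerns Stage 3. Integrating along geodesics of $\X$ only gives the upper Lipschitz bound. For the lower bound you must first show that $F$ is a homeomorphism: a proper local homeomorphism onto $\R^n$ is a covering, hence a homeomorphism since $\R^n$ is simply connected. Only then can you integrate the local bound for $F^{-1}$ along segments of $\R^n$.
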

Assuming that $\X$ is purely $n$-dimensional, Nagano proved that one can relax the  asymptotic volume bound and still obtain that  $\X$ is  homeomorphic to $\R^n$.

\begin{theo}[{\cite[Theorem 1.2]{Nagano2022}}]
	\label{theo:Nagano_3/2}
	If $(\X,\sfd)$ is a proper, geodesically complete, purely $n$-dimensional, $\CAT(0)$ space with $\textup{as-vol}_n(\X) < 3/2$, then $(\X,\sfd)$ is homeomorphic to $\R^n$.
\end{theo}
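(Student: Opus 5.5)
The plan is to reduce the statement to local and asymptotic volume bounds on spaces of directions and on the Tits boundary, and then to glue the two through the topological characterizations recalled above.

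\emph{First step: monotonicity.} For a proper, geodesically complete, purely $n$-dimensional $\CAT(0)$ space, the ratio $r\mapsto \mathcal{H}^n(B_r(x))/(\omega_n r^n)$ is non-decreasing. This is a Bishop--Gromov type statement, which I would prove via the $\frac{r}{R}$-Lipschitz surjective contraction maps $\phi^{R,r}_x$. Its limit as $r\to 0$ equals $\mathcal{H}^{n-1}(\Sigma_x\X)/\mathcal{H}^{n-1}(\mathbb{S}^{n-1})$, since $T_x\X=\textup{Cone}(\Sigma_x\X)$ is the blow-up limit and volume converges for purely $n$-dimensional spaces by \cite{LN19}. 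Hence for every $x\in\X$
$$\mathcal{H}^{n-1}(\Sigma_x\X)\le \textup{as-vol}_n(\X)\cdot\mathcal{H}^{n-1}(\mathbb{S}^{n-1})<\tfrac32\,\mathcal{H}^{n-1}(\mathbb{S}^{n-1}).$$
The finiteness of $\textup{as-vol}_n(\X)$, together with the lower bound $\mathcal{H}^n(B_r(x))\ge\omega_n r^n$, makes $\mathcal{H}^n$ doubling. So Proposition \ref{prop:properties_asymptotic_cone}(v) gives the cone at infinity $\textup{Cone}(\partial_T\X)$, and \eqref{eqasvol-nagano} gives $\mathcal{H}^{n-1}(\partial_T\X)<\frac32\mathcal{H}^{n-1}(\mathbb{S}^{n-1})$. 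Moreover $\partial_T\X$ is purely $(n-1)$-dimensional, because $\dim C_\infty\X=n$ and the cone is the volume limit.

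\emph{Second step: the key spherical lemma.} I would prove that a compact, geodesically complete, purely $m$-dimensional $\CAT(1)$ space $\Sigma$ with $\mathcal{H}^m(\Sigma)<\frac32\mathcal{H}^m(\mathbb{S}^m)$ is homotopy equivalent to $\mathbb{S}^m$ (indeed a homology manifold). The argument goes by induction on $m$, with base case $m=0$: there a volume below $3$ points forces $\mathbb{S}^0$.
\begin{itemize}
\item For the inductive step, the $\CAT(1)$ analogue of monotonicity bounds $\mathcal{H}^{m-1}(\Sigma_y\Sigma)$ by the same ratio. Induction then gives that each $\Sigma_y\Sigma$ is a homotopy sphere, so $\Sigma$ is a topological manifold by Proposition \ref{prop:characterization_top_manifolds}.
\item For the global topology, the volume bound below $\frac32$ should force every point $y$ to have a small antipodal set; each $\Sigma\setminus\{$antipodes$\}$ then retracts geodesically onto $y$. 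A covering or ``at most one bad region'' argument then shows that $\Sigma$ is covered by two contractible open sets with suitable intersection, hence is a homotopy sphere.
\end{itemize}
This global step is where I expect the main obstacle. The constant $\frac32$ must enter exactly here, presumably through a volume comparison of the set of points that have more than one antipode, using the $\pi$-geodesic extension in the cone.

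\emph{Third step: conclusion.} Applying the lemma to each $\Sigma_x\X$, Proposition \ref{prop:characterization_top_manifolds} shows that $\X$ is a topological $n$-manifold. Applying it to $\partial_T\X$, Proposition \ref{prop:homotopy_stability_large_spheres} shows that large spheres $S_R(x)$ are homotopy equivalent to $\mathbb{S}^{n-1}$. Since $\X$ is contractible and the large spheres are simply connected (for $n\ge3$), the complements of large balls retract radially onto these spheres, so $\X$ is simply connected at infinity. The classical results of Stallings ($n\ge5$), Freedman ($n=4$) and Brin ($n=3$), together with the trivial low-dimensional cases, then give $\X\cong\R^n$.
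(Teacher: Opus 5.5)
The outer reduction is sound, and it is the natural architecture. You get monotonicity of $\mathcal{H}^n(B_r(x))/\omega_n r^n$ from the $\tfrac rR$-Lipschitz contractions, which yields the bounds $\mathcal{H}^{n-1}(\Sigma_x\X),\ \mathcal{H}^{n-1}(\partial_T\X)<\frac32\mathcal{H}^{n-1}(\mathbb{S}^{n-1})$, together with doubling and the cone at infinity. You then finish by manifold recognition and simple connectivity at infinity via large spheres. That last step is \cite[Theorem 4.7]{Nagano2022}, which the paper generalizes in Corollary \ref{cor:hom_mfds_homeo_to_R^n}; the paper itself does not reprove this statement, it only quotes it.

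\textbf{The gap.} The genuine gap is the global half of your ``key spherical lemma'', which you flag as the main obstacle and then describe only with ``should'' and ``presumably''. This is not a technical detail. The claim that a compact, geodesically complete, purely $m$-dimensional $\CAT(1)$ space with $\mathcal{H}^m<\frac32\mathcal{H}^m(\mathbb{S}^m)$ is a homotopy sphere is a $\CAT(1)$ sphere theorem of the type proved by Nagano (cf.\ \cite{Nag18}), and it carries essentially all the content of the statement.

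Your local induction only shows that $\Sigma$ is a closed topological $m$-manifold, which says nothing about its global homotopy type. The paper's Theorem \ref{theo:intro_sphere_degree} cannot replace the missing step either. It needs a source already known to be a homology manifold homotopy equivalent to a sphere. The paper obtains this for $\partial_T\X$ from the quasi-isometry to $\R^n$ (Proposition \ref{prop:QI_to_R^n_has_biLip_asymptotic_cones}), whereas under the sole hypothesis $\textup{as-vol}_n(\X)<\frac32$ it is exactly what is unknown. Conversely, the lemma for $\partial_T\X$ alone would suffice for $n\ge 3$: Theorem \ref{theo:intro_sphere_degree} applied to $\partial\log_x$, followed by Corollary \ref{cor:hom_mfds_homeo_to_R^n}, would do the rest.

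\textbf{Where your sketch stops.} By the spherical volume comparison for balls (the same one that gives $\mathcal{H}^m(\Sigma)\ge\mathcal{H}^m(\mathbb{S}^m)$), every $B_{\pi/2}(p)$ has volume at least $\frac12\mathcal{H}^m(\mathbb{S}^m)$. So the volume bound forbids three points at pairwise distance $\ge\pi$. Hence if $\sfd(y_1,y_2)\ge\pi$, then $\Sigma=B_\pi(y_1)\cup B_\pi(y_2)$ is a union of two contractible open sets. This is a natural place for $\frac32$ to enter, and it is sharp for $\mathbb{S}^{m-1}\ast\{3\ \textup{points}\}$; it gives your two-set cover essentially for free.

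But then $\Sigma$ is homotopy equivalent to the unreduced suspension of
$W=B_\pi(y_1)\cap B_\pi(y_2)=B_\pi(y_1)\setminus A_{y_2}$, where $A_{y_2}=\{z\,:\,\sfd(z,y_2)\ge\pi\}\ni y_1$. Everything hinges on proving $W\simeq\mathbb{S}^{m-1}$, and you offer no mechanism for this.

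Antipodal sets are not points in general: for a circle of length in $(2\pi,3\pi)$, $A_{y_2}$ is an arc containing $y_1$. So the obvious attempt already leaves $W$ in that example. That attempt contracts $W$ radially toward $y_1$ onto a small punctured ball, which is homotopy equivalent to $\Sigma_{y_1}\Sigma$ by the local theory of \cite{LN19}. Moreover, $\CAT(1)$ comparison gives no control on $A_{y_2}$: triangles with vertices $y_1,y_2$ and a point of $A_{y_2}$ have perimeter at least $2\pi$.

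Until this global step is supplied, or the $\CAT(1)$ sphere theorem is invoked, neither the manifold property of $\X$ nor the simple connectivity of its large spheres is established.
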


The constant $3/2$ is sharp, as shown in \cite[Theorem 1.3]{Nagano2022}. \\
 Assuming further   that $\X$ is a homology $n$-manifold, then the conclusion of Theorem \ref{theo:Nagano_3/2} remains true if $\textup{as-vol}_n(\X) < 3/2 + \varepsilon_n$, where $\varepsilon_n$ is a small positive constant (cp. \cite[Theorem 1.4]{Nagano2022}). 
 This is the best result one can obtain using the methods of \cite{Nagano2022}. In particular, Theorems \ref{theo:intro_homology_qi_R^n} and \ref{theo:intro_2} are out of reach, and that is why we follow a different strategy in Sections \ref{sec-DE} and \ref{sec:optimality_constant_2}.\\
Here, we focus on the proof of Theorem \ref{theo:intro_qi_implies_biLip}, which follow directly from Theorem \ref{theo:Nagano_biLip}, Proposition \ref{prop:QI_to_R^n_has_biLip_asymptotic_cones} and the following observation. 
\begin{lemma}
\label{lemma:as_vol_cone_infinity}
    Let $(\X,\sfd)$ be a proper, geodesically complete, $\CAT(0)$ space admitting the cone at infinity. Then $\textup{as-vol}_n(\X) = \textup{as-vol}_n(C_\infty\X)$.
\end{lemma}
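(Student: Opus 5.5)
The plan is to compare both quantities with $\mathcal{H}^{n-1}(\partial_T\X)$. Since $C_\infty\X\cong\textup{Cone}(\partial_T\X)$ by Proposition \ref{prop:properties_asymptotic_cone}, the Tits boundary of the cone is again $\partial_T\X$, with the same Hausdorff measure. So by \eqref{eqasvol-nagano}, applied once to $\X$ and once to $C_\infty\X$ (which is proper, geodesically complete and $\CAT(0)$, and is its own cone at infinity), both asymptotic volumes equal $\mathcal{H}^{n-1}(\partial_T\X)/\mathcal{H}^{n-1}(\mathbb{S}^{n-1})$. For $C_\infty\X$ this can also be checked by hand: a Euclidean cone satisfies $\mathcal{H}^n(B_R(o))=\frac{R^n}{n}\mathcal{H}^{n-1}(\partial_T\X)$ by the coarea/polar-coordinates formula. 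The ratio $\mathcal{H}^n(B_R(o))/(\omega_nR^n)$ is then constant in $R$ and equal to $\mathcal{H}^{n-1}(\partial_T\X)/(n\omega_n)=\mathcal{H}^{n-1}(\partial_T\X)/\mathcal{H}^{n-1}(\mathbb{S}^{n-1})$. The only point needing care is that the base point of $\textup{as-vol}_n$ is irrelevant, which the paper already notes.

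If one prefers not to rely on \eqref{eqasvol-nagano} for $\X$ itself, I would argue directly through convergence of volumes. Write $\frac{\mathcal{H}^n(B_R(x))}{\omega_nR^n}=\frac{\mathcal{H}^n_{\lambda\sfd}(B_1(x))}{\omega_n}$ with $\lambda=1/R$. The rescaled spaces $(\X,\lambda\sfd,x)$ converge to $(C_\infty\X,o)$ as $\lambda\to 0$. By the continuity of Hausdorff measure of unit balls under pointed Gromov--Hausdorff convergence of geodesically complete $\CAT(0)$ spaces (the volume convergence of \cite{LN19}, see also \cite{CavS20}), applied with a uniform doubling constant, which holds since $\X$ is metrically doubling by Proposition \ref{prop:properties_asymptotic_cone}, we get
\[
\mathcal{H}^n_{\lambda\sfd}(B_1(x))\to\mathcal{H}^n(B_1(o)).
\]
Hence the $\limsup$ defining $\textup{as-vol}_n(\X)$ is a genuine limit and equals $\mathcal{H}^n(B_1(o))/\omega_n=\textup{as-vol}_n(C_\infty\X)$.

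The main obstacle in this second route is the volume convergence itself when $\X$ is not purely $n$-dimensional. Lower-dimensional parts have zero $\mathcal{H}^n$, and the continuity statement must be applied to the $n$-dimensional measure rather than to the top-dimensional measure of each space. I would handle this by quoting the version of volume continuity valid for $\mathcal{H}^n$ with $n\ge\dim$, or simply fall back to the first argument via \eqref{eqasvol-nagano}, which the paper states holds whenever $C_\infty\X$ exists.
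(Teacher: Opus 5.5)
Your first route is correct and is essentially the paper's proof. The paper identifies $C_\infty\X$ with $\textup{Cone}(\partial_T\X)$ and obtains $\mathcal{H}^n(B_R(o))=\frac{R^n}{n}\mathcal{H}^{n-1}(\partial_T\X)$ from the cone structure via \cite[Lemma 6.4]{Na02}, divides by $\omega_nR^n=\frac{R^n}{n}\mathcal{H}^{n-1}(\mathbb{S}^{n-1})$, and concludes with \eqref{eqasvol-nagano} applied to $\X$; the volume-convergence alternative you sketch is not needed.
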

\begin{proof}
    By Proposition \ref{prop:properties_asymptotic_cone}, we have $C_\infty\X = \textup{Cone}(\partial_T\X)$ . Let us denote by $o_\infty$ the vertex of the cone at infinity. The cone structure, together with \cite[Lemma 6.4]{Na02}, gives that $\mathcal{H}^n(B_R(o_\infty)) = \frac{R^n}{n}\cdot\mathcal{H}^{n-1}(\partial_T\X)$ for every $R>0$. The same formula applied to $\R^n$ gives $\omega_nR^n = \mathcal{H}^n(B_R(0)) = \frac{R^n}{n}\cdot\mathcal{H}^{n-1}(\mathbb{S}^{n-1})$, where $0\in \R^n$. Therefore, using \eqref{eqasvol-nagano} we have
    $$\textup{as-vol}_n(C_\infty\X) = \limsup_{R\to +\infty}\frac{\mathcal{H}^n(B_R(o_\infty))}{\omega_n R^n} = \limsup_{R\to +\infty}\frac{\mathcal{H}^n(B_R(o_\infty))}{\mathcal{H}^n(B_R(0))} = \frac{\mathcal{H}^{n-1}(\partial_T\X)}{\mathcal{H}^{n-1}(\mathbb{S}^{n-1})} = \textup{as-vol}_n(\X).$$
    \qedhere
\end{proof}

\begin{proof}[Proof of Theorem \ref{theo:intro_qi_implies_biLip}]
Let $\varepsilon > 0$,  let $\delta_0 = \delta_0(\varepsilon,n) > 0$ be the costant given by Theorem \ref{theo:Nagano_biLip}, and set $\delta := (1+\delta_0)^{\frac{1}{n}} - 1 >0$. 
Let $(\X,\sfd)$ be a proper, geodesically complete, $\CAT(0)$ space which is $(1 + \delta, C)$-quasi-isometric to $\R^n$. Proposition \ref{prop:QI_to_R^n_has_biLip_asymptotic_cones} implies that $C_\infty \X$ exists and it is $(1+\delta)$-biLipschitz equivalent to $\R^n$, and that $\partial_T\X$ is compact and purely $(n-1)$-dimensional. Then, by Lemma \ref{lemma:as_vol_cone_infinity}, we have
	\begin{equation}
        \textup{as-vol}_n(\X) = \textup{as-vol}_n(C_\infty\X) \le (1+\delta)^{n} \le 1+\delta_0.
	\end{equation}
Therefore,  Theorem \ref{theo:Nagano_biLip} implies that $(\X,\sfd)$ is $(1+\varepsilon)$-biLipschitz equivalent to $\R^n$.\\
Finally, if  $(\X,\sfd)$ is $(1, C)$-quasi-isometric to $\R^n$,   we can apply (i) to $\varepsilon_k = 1/k$ for $k\in \N$, and find a $(1+\varepsilon_k)$-biLipschitz homeomorphism $f_k\colon \X \to \R^n$. After composing these $f_k$  with  suitable translations of $\R^n$, we can suppose that all of them send a fixed point $x\in \X$ into the origin of $\R^n$. The sequence $\{f_k\}$ is then equi-Lipschitz and equi-bounded, so by Ascoli-Arzelà we can extract a converging subsequence. The limit map is therefore the desired isometry. \\
(Remark: assertion (ii) can be deduced  directly, without invoking Theorem \ref{theo:Nagano_biLip}, since    if $(\X,\sfd)$  is $(1,C)$-quasi-isometric to $\R^n$,  then $\partial_T\X$ is isometric to $\mathbb{S}^{n-1}$ by Proposition \ref{prop:QI_to_R^n_has_biLip_asymptotic_cones}, which easily implies that    $\X$ is flat, as explained at the end of the proof of Theorem \ref{theo:intro_qi_cocompact}.)
\end{proof}


\vspace{2mm}
\section{The proof of Theorem \ref{theo:intro_homology_qi_R^n}}
\label{sec:thm_homology_qi_R^n}

In this section we prove Theorem \ref{theo:intro_homology_qi_R^n} by developing the main tools needed for the study of asymptotic properties of $\CAT(0)$ homology manifolds.  
We recall the statement of the theorem.

\begin{T8}
    Let $(\X,\sfd)$ be a proper, $\CAT(0)$ homology $m$-manifold. If $(\X,\sfd)$ is quasi-isometric to $\R^n$, then $\X$ is homeomorphic to $\R^n$.
\end{T8}

The proof is mainly inspired by the works of Thurston \cite{Thurston1996} and Davis-Januszkiewicz \cite{DavisJanuszkiewicz1991}. We will actually extend some of the results of \cite{DavisJanuszkiewicz1991} beyond the PL-case.
The two main tools are provided by the following classical results.

\begin{prop}[{\cite[Corollary 2.10]{Thurston1996}}]
	\label{prop:Thurston}
	Let $(\X,\sfd)$ be a proper, $\CAT(0)$ homology $n$-manifold. Let $x\in \X$ and $0<r\le R$. Then:
	\begin{itemize}
		\item[(i)] the metric sphere $S_r(x)$ is a homology $(n-1)$-manifold;
		\item[(ii)] the natural contraction map $\varphi_x^{R,r} \colon S_R(x) \to S_r(x)$ is acyclic, that is for every $y \in S_r(x)$,  calling   $F_y=  (\varphi_x^{R,r})  ^{-1}(y)$   the fiber of $y$,  we have  $\tilde{H}^*(  F_y; \Z) = 0$, where $\tilde{H}^*$ is the reduced cohomology. 
	\end{itemize}
\end{prop}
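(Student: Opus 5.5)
The plan is to prove the two statements together. The engine is Alexander-type duality inside the homology manifold $\X$, applied to compact sets that are cones under the geodesic contraction. Throughout, $\X$ is geodesically complete by Lemma \ref{lemma:CAT_homology_are_purely_dimensional}, so the maps $\phi^{R,r}_x$ are defined and surjective. I also use that closed balls $\overline{B}_R(x)$ are contractible via the geodesic homotopy to $x$, and that geodesics issuing from $x$ depend continuously on their endpoints.

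\textbf{Step 1: the cone over a fiber is contractible.} For $y\in S_r(x)$, let
$$C_y:=\{z\in \overline{B}_R(x)\,:\, y\in[x,z]\}.$$
This is the union of the geodesic segments $[y,z]$ for $z\in F_y$. It is compact, and it is contractible via $(z,t)\mapsto$ the point of $[y,z]$ at parameter $1-t$. Continuity of this homotopy follows from uniqueness of geodesics in $\CAT(0)$ spaces.

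\textbf{Step 2: duality on $U:=B_{R+\varepsilon}(x)$.} The open set $U$ is a homology $n$-manifold, and it is acyclic because it is contractible. Alexander–Sitnikov duality in homology manifolds (Bredon, Ch.~V) gives, for compact $K\subset U$,
$$\check H^k(K)\cong H_{n-k}(U,U\setminus K).$$
I apply this to $K=C_y$ and to $K=C_y\cup[x,y]$, and compare the resulting relative groups using the long exact sequence of the triple.

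\textbf{Step 3: reduce to a deformation retraction, which is the main obstacle.} The claim I need is that the pairs $(U\setminus C_y,\ \partial)$ and $(U\setminus F_y,\ \partial)$ are related by a deformation along geodesics from $x$. Concretely, I want to push $U\setminus C_y$ radially outward, away from $x$, onto a neighbourhood of $S_R(x)\setminus F_y$. This requires extending geodesics continuously, which is impossible in general, since extensions of geodesics are not unique. Thurston's trick is to push inward along $[x,\cdot]$ instead, which is always continuous, and to work with cohomology with compact supports. I would follow that route: show that $\overline{B}_R(x)\setminus C_y$ deformation retracts, via the maps $\phi^{R,s}_x$ with $s\le R$, onto a set homotopy equivalent to $\overline{B}_r(x)\setminus\{y\}$. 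This makes $H_*(U,U\setminus C_y)$ equal to the local homology $H_*(\X,\X\setminus\{y\})\cong H_*(\R^n,\R^n\setminus\{0\})$. Combined with Step 2, this forces the reduced \v{C}ech cohomology of $C_y$ and of $F_y$ to agree up to a degree shift. The contractibility of $C_y$ from Step 1 then yields $\tilde H^*(F_y)=0$, which is (ii). I expect the compactly supported bookkeeping in this step to be the technical core.

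\textbf{Step 4: part (i).} Let $A=\{z:r-\eta<\sfd(x,z)<r+\eta\}$. Using the contraction maps, $A$ is a homology $n$-manifold that maps onto $S_r(x)$ with acyclic fibers. By (ii), which holds for all radii, these fibers are the sets $\{z\in A : y\in[x,z]\}\cup$ an initial segment. The Vietoris–Begle theorem, applied to the local pairs, then identifies $H_*(A,A\setminus\{y\})$ with the suspension of $H_*(S_r(x),S_r(x)\setminus\{y\})$. It follows that the local homology of $S_r(x)$ at $y$ is that of $\R^{n-1}$. Finite cohomological dimension of $S_r(x)$ is inherited from $\X$, so $S_r(x)$ is a homology $(n-1)$-manifold.
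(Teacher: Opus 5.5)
\textbf{Step 3 contains no argument linking $F_y$ to anything.} Steps 1 and 2 are fine. Step 3, which you rightly call the core, is where the proof breaks.

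\begin{itemize}
\item The sets $\overline{B}_R(x)\setminus C_y$ and $\overline{B}_r(x)\setminus\{y\}$ are both star-shaped about $x$. So the ``deformation'' you describe holds trivially and carries no information.
\item The isomorphism $H_*(U,U\setminus C_y)\cong H_*(\R^n,\R^n\setminus\{0\})$ already follows formally from Steps 1--2.
\item No group involving $F_y$ is ever computed. Nothing forces $\check H^*(F_y)$ to match $\check H^*(C_y)$, and the ``degree shift'' has no source.
\item Working in $U=B_{R+\varepsilon}(x)$ does not help. There $U\setminus C_y$ is not star-shaped, because extensions of $[x,y]$ beyond $S_R(x)$ pass through $C_y$. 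By duality and Step 1, (ii) is merely equivalent to the inclusion $U\setminus C_y\hookrightarrow U\setminus F_y$ being a homology isomorphism, which is what needs proof.
\end{itemize}

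\textbf{The missing step.} Apply duality to the compact \emph{pair} $(C_y,F_y)$; this is where the compact supports you mention but never use actually enter. You get
\[
\check H^q(C_y,F_y)\cong \check H^q_c(C_y\setminus F_y)\cong H_{n-q}(\X\setminus F_y,\X\setminus C_y).
\]
Since $C_y\setminus F_y\subset B_R(x)$ and $F_y\cap B_R(x)=\emptyset$, excise the set $\{\sfd(x,\cdot)\ge R\}\setminus F_y$. It is closed in $\X\setminus F_y$ and contained in the open set $\X\setminus C_y$. This identifies the right-hand side with $H_{n-q}(B_R(x),B_R(x)\setminus C_y)$.

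That group vanishes, because both sets are star-shaped about $x$: if $y\notin[x,z]$, then $y\notin[x,w]$ for every $w\in[x,z]$. All sets involved are open in the locally contractible space $\X$, so singular homology is legitimate here. Hence $\check H^*(C_y,F_y)=0$, and Step 1 together with the exact sequence of the pair gives $\tilde H^*(F_y)=0$.

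The paper does not argue this way. It relies on Thurston's proof: closed balls are homology $n$-manifolds with boundary the metric spheres, their doubles are homology spheres, and Alexander duality is applied there.

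\textbf{Step 4 fails as written.} There is no continuous map from the open shell $A$ onto $S_r(x)$ with the fibres you describe. A point $z$ with $\sfd(x,z)<r$ would belong to the ``initial segment'' of every $y\in S_r(x)$ with $z\in[x,y]$. Such $y$ is not unique when geodesics branch, which is exactly the outward-extension obstruction you identified in Step 3.

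On the half-shell $\{r\le\sfd(x,\cdot)<r+\eta\}$ radial projection is a genuine map with contractible fibres. However, that set is not open in $\X$, so you have no a priori reason for it to be a homology manifold, with or without boundary.

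Two further problems remain. Vietoris--Begle for a map $p$ compares $(S_r(x),S_r(x)\setminus\{y\})$ with $(A,A\setminus p^{-1}(y))$, not with $(A,A\setminus\{y\})$. Also, (ii) concerns fibres in $S_R(x)$, not in a shell.

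The paper obtains (i) from two facts: $\overline{B}_r(x)$ is a homology $n$-manifold with boundary $S_r(x)$ (via Thurston's argument), and the boundary of such a space is a homology $(n-1)$-manifold (Mitchell).
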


\begin{obs}{\em 
The proof of the above result, in \cite{Thurston1996}, is based on \cite[Proposition 2.8]{Thurston1996}  which uses  singular homology, and  is a form of Alexander duality.
Proposition \ref{prop:Thurston}
still holds  if one considers  Borel-Moore homology in place of the singular homology considered there. \\ 
Indeed,  following the proof of \cite[Proposition 2.8]{Thurston1996}, we have that the closed balls $\overline{B}_r(x)$ are homology $n$-manifolds with boundary, and their boundary are the metric spheres $S_r(x)$, which are indeed   homology $(n-1)$-manifolds (defined via  Borel-Moore homology)  by \cite{Mitchell1990}. 
However, the sphere $S_r(x)$ might be not locally contractible (see Remark \ref{rmk:BorelMoore_vs_Singular}),  so the singular and Borel-Moore homologies may differ. 
The double of $\overline{B}_r(x)$ along its boundary is, in any case, a (Borel-Moore) homology $n$-manifold without boundary and a (Borel-Moore) homology sphere. This   follows from references [48] and [58] in \cite{Thurston1996}, where these results are stated for cohomology manifolds, and so they hold for homology manifolds as well by Remark \ref{rmk:BorelMoore_vs_Singular}.  The final step in Thurston's proof is an  Alexander duality result for homology spheres; the correct reference for this is \cite[Theorem VIII.6.4]{Wilder1965}.
}
\end{obs}
 
Thurston's result is particularly useful in conjunction with Vietoris-Begle's Theorem.
\begin{theo}[Vietoris-Begle's Mapping Theorem, \cite{spanier}, Chpt.6, Theorem 9.15]
	\label{theo:Vietoris-Begle} ${}$\\
	Let $\X$ and $\Y$ be two compact metric spaces, and let $f\colon \X \to \Y$ be surjective and continuous. Suppose that the fibers 
	$f^{-1}(y)$ are acyclic for every $y\in \Y$. Then, the induced homomorphism $f^*\colon H^*(\Y;\Z) \to H^*(\X;\Z)$ is an isomorphism.
\end{theo}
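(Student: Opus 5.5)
The plan is to prove the statement in Alexander--Spanier (equivalently \v{C}ech) cohomology, which is the theory the statement implicitly uses. The argument goes through the Leray spectral sequence of $f$. Concretely, I would consider the direct image sheaves $\mathcal{R}^q f_\ast\Z$ on $\Y$, i.e. the sheaves associated with the presheaves $U\mapsto H^q(f^{-1}(U);\Z)$. I would then use the Leray spectral sequence
$$E_2^{p,q}=H^p(\Y;\mathcal{R}^q f_\ast \Z)\Longrightarrow H^{p+q}(\X;\Z),$$
whose edge homomorphism $H^p(\Y;f_\ast\Z)\to H^p(\X;\Z)$ is precisely $f^\ast$ once $f_\ast\Z$ is identified with the constant sheaf $\Z$. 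Sheaf cohomology agrees with \v{C}ech and Alexander--Spanier cohomology here because $\X$ and $\Y$ are compact metric, hence paracompact.

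The key step is to compute the stalks of $\mathcal{R}^q f_\ast\Z$, and this is where I expect the main difficulty.

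1. Since $\X$ is compact and $\Y$ is Hausdorff, $f$ is a closed map. Hence, for every $y\in\Y$, the sets $f^{-1}(U)$, with $U$ ranging over the open neighbourhoods of $y$, form a cofinal system of neighbourhoods of the fiber $F_y=f^{-1}(y)$.
2. The fiber $F_y$ is a compact subset of the paracompact Hausdorff space $\X$, so it is taut with respect to Alexander--Spanier cohomology (\cite{spanier}, Chpt.6). This gives
$$(\mathcal{R}^q f_\ast\Z)_y=\varinjlim_{U\ni y}H^q(f^{-1}(U);\Z)\cong H^q(F_y;\Z).$$

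This tautness/continuity property is exactly what fails for singular cohomology. For that reason I would check carefully that tautness applies to arbitrary open neighbourhoods, not only to paracompact ones; in a metric space every open set is paracompact, so this should be fine.

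Next I would use the hypothesis that the fibers are acyclic, $\tilde H^\ast(F_y;\Z)=0$. Surjectivity makes each $F_y$ non-empty, so the stalks vanish for $q>0$ and equal $\Z$ for $q=0$. Therefore $\mathcal{R}^q f_\ast\Z=0$ for $q>0$. For $q=0$, the natural map from the constant sheaf $\Z_\Y$ to $f_\ast\Z$ is an isomorphism on stalks, hence an isomorphism of sheaves.

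The spectral sequence is then concentrated in the row $q=0$ and degenerates at $E_2$. Its edge map is the isomorphism $f^\ast\colon H^\ast(\Y;\Z)\to H^\ast(\X;\Z)$, which proves the statement.

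As a fallback avoiding spectral sequences, I would argue by induction on degree with finite open covers of $\Y$ and Mayer--Vietoris. Using the same tautness statement, one shows first that $f^\ast$ is an isomorphism over small neighbourhoods of a point, up to passing to the direct limit. One then globalizes by compactness via a Mayer--Vietoris five-lemma argument. In this route the bookkeeping of the direct limits is again the delicate part.
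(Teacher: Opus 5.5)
The paper gives no proof of this statement and simply cites Spanier. Your argument is correct and is the standard proof. Closedness of $f$ together with tautness of the compact fibres (in Alexander--Spanier/\v{C}ech cohomology) identifies the stalks of $\mathcal{R}^q f_\ast\Z$ with $H^q(f^{-1}(y);\Z)$, and the degenerate Leray spectral sequence then exhibits $f^\ast$ as an isomorphism. This is the sheaf-theoretic version of the presheaf-cohomology argument in the cited reference, which uses those same two inputs.
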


The first consequence is a generalization of \cite[Theorem 3d.1]{DavisJanuszkiewicz1991} for homology manifolds (not necessarily PL).

\begin{theo}
\label{theo:hom_mfd_group_infinity}
Let $(\X,\sfd)$ be a proper, $\CAT(0)$ homology $n$-manifold. Then:
    \begin{itemize}
        \item[(i)] the metric sphere $S_r(x)$ is a homology $(n-1)$-manifold which is a homology $(n-1)$-sphere for every $r>0$;
        \item[(ii)] if $X$  admits the cone at infinity, then for every $x\in \X$ there exists $r_x>0$ such that   the contraction maps $\varphi_x^{R,r} \colon S_R(x) \to S_r(x)$  induce surjective maps between fundamental groups, for  every $r  \in (0,r_x)$ and every $R\ge r$. 
    \end{itemize}
\end{theo}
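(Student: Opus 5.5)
For (i), the manifold part is Proposition \ref{prop:Thurston}(i). To see that $S_r(x)$ is a homology sphere, I would compare it with small spheres via the contraction maps. Fix $x$ and let $r_x$ be as in Propositions \ref{prop:homotopy_type_small_metric_spheres} and \ref{prop:spheres_loc_contr_and_loc_path_connected}(i), so that for $0<s<r_x$ the sphere $S_s(x)$ is locally contractible and homotopy equivalent to $\Sigma_x\X$. By Proposition \ref{prop:characterization_hom_manifolds}, $H_\ast(\Sigma_x\X)\simeq H_\ast(\mathbb{S}^{n-1})$, since $\X$ is a homology $n$-manifold and geodesically complete by Lemma \ref{lemma:CAT_homology_are_purely_dimensional}. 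Given any $r>0$, I would choose $s<\min\{r,r_x\}$. The map $\varphi_x^{r,s}\colon S_r(x)\to S_s(x)$ is continuous and surjective (geodesic completeness), and its fibers are acyclic by Proposition \ref{prop:Thurston}(ii). Vietoris--Begle (Theorem \ref{theo:Vietoris-Begle}) then gives $\check H^\ast(S_r(x))\simeq \check H^\ast(S_s(x))\simeq H^\ast(\mathbb{S}^{n-1})$, where the last identification uses that $S_s(x)$ is compact and locally contractible, so \v{C}ech and singular cohomology agree. Because $S_r(x)$ is a compact homology $(n-1)$-manifold, I would pass from cohomology to Borel--Moore homology. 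One route is Remark \ref{rmk:BorelMoore_vs_Singular}(vi) together with the universal coefficient sequence for Borel--Moore homology of compact clc$_\Z$ spaces; another is Poincaré duality with the orientation sheaf, after checking orientability. The outcome is $H_\ast(S_r(x))\simeq H_\ast(\mathbb{S}^{n-1})$.

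For (ii), I would choose $r_x$ small as above and also invoke Proposition \ref{prop:spheres_loc_contr_and_loc_path_connected}(ii), so that large spheres $S_R(x)$ with $R\ge R_x$ are locally contractible and locally path connected. The key claim is that a surjective map $\varphi\colon A\to B$ between compact, locally path connected, semi-locally simply connected metric spaces, with connected fibers, is $\pi_1$-surjective. Acyclic fibers are in particular connected, since $\tilde H^0=0$. I would prove this by a lifting argument. Cover $B$ by small open sets $V$ over which loops are null-homotopic in $B$. Using compactness and connectedness of the fibers, together with local path connectedness of $A$, show that $\varphi^{-1}(V)$ is path connected for suitable $V$. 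Then subdivide a loop $\gamma$ in $B$ into short arcs, each in some such $V$, choose points in the fibers over the subdivision points, and join consecutive ones by paths in $\varphi^{-1}(V)$. The image of the resulting loop in $A$ is homotopic to $\gamma$. An alternative is the covering-space route: pull back a connected covering of $B$ and use connectivity of the fibers to show that the pullback stays connected.

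The main obstacle is arranging the spaces so that this lemma applies. If $R\ge R_x$ and $r<r_x$, both $S_R(x)$ and $S_r(x)$ are locally contractible and locally path connected. For intermediate $R$ these properties are not guaranteed. I would factor the map as $\varphi_x^{R,r}\circ\varphi_x^{R',R}=\varphi_x^{R',r}$ with $R'\ge R_x$. Applying the lemma to $\varphi_x^{R',r}$ shows that it is $\pi_1$-surjective, and hence $\varphi_x^{R,r}$ is $\pi_1$-surjective as well, since $\varphi_x^{R',r}$ factors through it. This is why the cone at infinity hypothesis appears in (ii). The remaining delicate point is proving that preimages of small neighbourhoods are path connected. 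For this I would use that the fibers are compact and connected, and that $A$ is locally path connected, so that a fiber is covered by finitely many path connected open sets which chain together.
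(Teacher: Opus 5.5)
Your part (i) is the paper's argument: Vietoris--Begle applied to $\varphi_x^{r,s}$ with $s$ small, using Thurston's acyclicity, then the identification of small spheres with $\Sigma_x\X$ and Proposition \ref{prop:characterization_hom_manifolds}. Your remark about passing from \v{C}ech cohomology to Borel--Moore homology by the universal coefficient sequence just makes explicit what the paper summarizes as ``by Poincar\'e duality''.

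For (ii) you share the paper's reduction: factor $\varphi_x^{R',r}=\varphi_x^{R,r}\circ\varphi_x^{R',R}$ through a large $R'$, so that both end spheres are locally contractible and locally path connected. The core argument, however, is genuinely different. The paper uses the full homological content. Universal coefficients give $|\deg\varphi_x^{R_0,r}|=1$. It then passes to the covering $p\colon\widehat S_r(x)\to S_r(x)$ associated with the image of $\pi_1(S_{R_0}(x))$, and shows via Vietoris--Begle on the lift $\hat f$ that $\widehat S_r(x)$ is a compact, orientable homology sphere. Finally it gets $\deg p=\pm1$ from multiplicativity of degree and Remark \ref{rmk:degree_covering}.

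You use only $\tilde H^0=0$ of the fibres, i.e.\ their connectedness. You combine this with closedness of $\varphi$, local path connectedness of the big sphere, and semi-local simple connectedness of the small one. This is more elementary, needs neither part (i) nor any degree theory, and works for any closed surjection with connected fibres.

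Your covering variant is the cleanest way to write it. The lift $\hat f$ to the covering associated with $\varphi_*\pi_1$ maps each connected fibre into a discrete fibre of $p$, so it is constant on fibres. Hence $\hat f=\sigma\circ\varphi$ for a section $\sigma$ of $p$, which is continuous because $\varphi$ is a quotient map. The image of a section over a connected, locally connected base is open and closed, so $p$ is one-sheeted. This is also the natural justification for the surjectivity of $\hat f$ that the paper asserts ``by covering theory'', so your route isolates the mechanism actually doing the work.

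If you keep the loop-lifting version, state the key step in relative form. For every open $V'\ni y$ there is an open $V\ni y$ such that any two points of $\varphi^{-1}(V)$ are joined by a path inside $\varphi^{-1}(V')$. That is what the finite chain of path-connected sets covering the fibre gives; $\varphi^{-1}(V)$ itself need not be path connected. Then choose $V'$ so that loops in $V'$ are null-homotopic in the small sphere, and use a Lebesgue number to subdivide the loop.
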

\begin{proof}
    Let us call $f=\varphi_x^{R,r}: S_R(x) \to S_r(x)$ the contraction, for simplicity.   \\
    Applying Vietoris-Begle Mapping Theorem   \ref{theo:Vietoris-Begle}   to $f$, and using Proposition \ref{prop:Thurston},  we deduce that $f^\ast: H^k(S_R(x)) \to H^k(S_r(x))$ is an isomorphism, for every $0<r<R$ and every $k$. Then,  the two metric spheres $S_r(x)$ and $S_R(x)$ are homology $(n-1)$-manifolds, by Proposition \ref{prop:Thurston}, with the same integral cohomology groups, hence same homology,
    by Poincar\'e duality. Moreover, Propositions \ref{prop:characterization_hom_manifolds} and \ref{prop:homotopy_type_small_metric_spheres} imply that $H_\ast(S_r(x)) \simeq H_\ast(\mathbb{S}^{n-1})$ for every $r$ small enough, so $H_\ast(S_R(x)) \simeq H_\ast(\mathbb{S}^{n-1})$ for every $x\in \X$ and every $R>0$. This shows (i).
    
    To show (ii),
     notice that by the universal coefficient theorem we have  a commutative diagram (see \cite[Chapter V] {bredon-book})
$$\xymatrix{
   0  \ar[r]^{} 
&  \text{Ext} \big(H^{n} (S_R(x)), \Z \big)  
   \ar[r]^{}   
    \ar@{^{}->}[d]_{}
&  H_{n-1} (S_R(x))   
   \ar[r]^{} 
   \ar@{^{}->}[d]_{\Large f_\ast}
&  \text{Hom}\big( H^{n-1} (S_R(x)), \Z \big)
   \ar[r]^{}
    \ar@{^{}->}[d]_{}
&  0
\\
   0  \ar[r]^{} 
&   \text{Ext} \big(H^{n} (S_r(x)), \Z \big)
   \ar[r]^{}   
&  H_{n-1} (S_r(x))   
   \ar[r]^{} 
&  \text{Hom} \big( H^{n-1} (S_r(x)), \Z \big)
   \ar[r]^{}
&  0 }
$$
    and since $S_r(x), S_R(x)$ are $(n-1)$-homology spheres, the Ext groups are trivial and $f_\ast$ is an isomorphism too; therefore   $\vert \textup{deg}(f) \vert=1$.
    Now recall that, by Proposition \ref{prop:spheres_loc_contr_and_loc_path_connected}, for every $x$ there  exist $r_x,R_x>0$ such that all metric spheres centred at $x$ with radii in $(0,r_x)\cup (R_x,\infty)$ are locally path connected and locally contractible, so we can apply covering theory to these spheres.  
    Then, choose first  $R_0 \ge R_x$ and $r \le r_x < R_0$, let  $p:\widehat S_r(x) \to S_r(x)$ be the   covering associated to the subgroup $f_\ast \pi_1 (S_{R_0}(x))$, and  let  $\hat f:  S_{R_0}(x) \to \widehat S_r(x)$ be a lift of the contraction map. 
     The space $\widehat S_r(x)$ is  a  homology manifold too (since  it covers a   homology manifold) which is still compact and connected,   since  $\hat f$ is surjective by covering theory; moreover, $\widehat S_r(x)$ is  still a homology sphere, again by   Vietoris-Begle's theorem applied  to $\hat f$, which has the same fibers as  $f$; so it is also orientable.
    Since $1=\vert \textup{deg}(f) \vert = \vert \textup{deg}(\hat f \circ p) \vert = \vert \textup{deg}(\hat f) \cdot \textup{deg} (p) \vert$ by Lemma \ref{lemma:degree_is_multiplicative}, we deduce that      
 the degree of the (finite) covering $p$ is $\pm 1$. This in turns implies that  $f_\ast: \pi_1 (S_{R_0}(x)) \to \pi_1(S_r(x))$ is surjective. \\
 Finally, for any $r \le R < R_0$ we know that $f_\ast = (\varphi^{R,r}_x )_\ast \circ (\varphi^{R_0,R}_x)_\ast$ is surjective, hence $(\varphi^{R,r}_x )_\ast$ is surjective too.
\end{proof}

\begin{obs}{\em 
    It is likely that item (ii) of Theorem \ref{theo:hom_mfd_group_infinity} holds without assuming that $\X$  admits the cone at infinity. In order to use our strategy one would need to prove that each metric sphere $S_r(x)$ is locally contractible. While this is always true for small enough spheres,  and also for large enough spheres when $\X$ has the   cone at infinity, by Proposition \ref{prop:spheres_loc_contr_and_loc_path_connected}. \\ Alternatively one could try to follow a purely algebraic topology proof, but this goes beyond the scope of this paper.
    }
\end{obs}

A corollary is the following generalization of \cite[Theorem 4.7]{Nagano2022} to homology manifolds.
\begin{cor}
\label{cor:hom_mfds_homeo_to_R^n}
    Let $n\ge 3$. Let $(\X,\sfd)$ be a proper, $\CAT(0)$ homology $n$-manifold that admits the cone at infinity. If $\pi_1(\partial_T\X) = \{0\}$ then $\X$ is homeomorphic to $\R^n$.
\end{cor}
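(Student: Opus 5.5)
The plan is to show that $\X$ is a topological $n$-manifold that is simply connected at infinity, and then invoke the classical recognition results: Stallings for $n\ge 5$, Freedman for $n=4$, and for $n=3$ the result of \cite{Brin1989}. Note first that $\X$ is contractible, being $\CAT(0)$. It is geodesically complete and purely $n$-dimensional by Lemma \ref{lemma:CAT_homology_are_purely_dimensional}.

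\textbf{Step 1: $\X$ is a topological manifold.} Fix $x\in\X$. By Proposition \ref{prop:homotopy_stability_large_spheres}, for every $R\ge R_x$ the sphere $S_R(x)$ is homotopy equivalent to $\partial_T\X$, so $\pi_1(S_R(x))=\{0\}$. By Theorem \ref{theo:hom_mfd_group_infinity}(ii), for $r<r_x$ the contraction $\varphi_x^{R,r}$ induces a surjection $\pi_1(S_R(x))\to\pi_1(S_r(x))$, hence $S_r(x)$ is simply connected. Here $S_r(x)$ is connected because it is a homology $(n-1)$-sphere with $n\ge 3$.

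By Proposition \ref{prop:homotopy_type_small_metric_spheres}, $\Sigma_x\X$ is homotopy equivalent to $S_r(x)$ for $r$ small, so $\Sigma_x\X$ is simply connected. Proposition \ref{prop:CAT(0)_hom_mfds_to_top_mfds_simply_connected} then shows that $\X$ is a topological $n$-manifold. Alternatively, $\Sigma_x\X$ is a simply connected homology sphere (Proposition \ref{prop:characterization_hom_manifolds}) of CW type, hence homotopy equivalent to $\mathbb{S}^{n-1}$, and Proposition \ref{prop:characterization_top_manifolds} applies.

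\textbf{Step 2: simple connectivity at infinity.} Let $K\subset\X$ be compact and choose $R\ge R_x$ with $K\subset B_R(x)$. Take $R'>R$. The set $\X\setminus \overline{B}_{R}(x)$ deformation retracts onto $S_{R'}(x)$, or at least retracts onto it in a way that gives a surjection on $\pi_1$, via radial projection along geodesics from $x$. Concretely, any loop in $\X\setminus\overline{B}_{R'}(x)$ is pushed radially inward onto $S_{R'}(x)$ by the $1$-Lipschitz nearest-point projection onto $\overline B_{R'}(x)$, staying outside $\overline B_R(x)$. There it is null-homotopic, since $S_{R'}(x)\simeq\partial_T\X$ is simply connected. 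This gives simple connectivity at infinity in the standard sense: every loop outside a large ball bounds a disk outside a given compact set. The connectedness of the complement of large balls also follows, since $S_{R'}(x)$ is connected.

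\textbf{Main obstacle.} The delicate point is dimension $n=3$. There a contractible $3$-manifold that is simply connected at infinity is $\R^3$ by \cite{Brin1989}, which uses the Poincaré conjecture. One must ensure the homotopy statements above are phrased in the form required by these recognition theorems. For $n=4$, Freedman needs simple connectivity at infinity and contractibility, which we have. A minor technical care is that the radial projection is well defined and continuous; this follows from uniqueness of geodesics in $\CAT(0)$ spaces and from the nearest-point projection onto the convex ball $\overline B_{R'}(x)$ being $1$-Lipschitz.
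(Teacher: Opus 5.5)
Your proof is correct and follows the paper's argument. Step 1 is exactly the paper's: Proposition~\ref{prop:homotopy_stability_large_spheres} and Theorem~\ref{theo:hom_mfd_group_infinity}(ii) make small spheres, hence $\Sigma_x\X$, simply connected, so $\X$ is a topological manifold. Step 2 unpacks the paper's final citation of \cite[Theorem 4.7]{Nagano2022} into the explicit simple-connectivity-at-infinity argument plus the classical recognition theorems, which the paper itself sketches in its outline.

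The only loose remark is your claim that $\X\setminus\overline B_R(x)$ ``deformation retracts'' onto $S_{R'}(x)$. This is unjustified, because pushing points outward along non-unique geodesic extensions is not canonical. It is also not needed: your inward radial homotopy of loops lying outside $\overline B_{R'}(x)$ is all that simple connectivity at infinity requires, and that argument is correct.
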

\begin{proof}
    A combination of Proposition \ref{prop:homotopy_stability_large_spheres} and item (ii) of Theorem \ref{theo:hom_mfd_group_infinity} implies that for every $x\in \X$ and $r>0$ small enough, the metric sphere $S_r(x)$ is simply connected. Therefore, it is homotopy equivalent to $\mathbb{S}^{n-1}$, being a homology $(n-1)$-manifold with the same homology of $\mathbb{S}^{n-1}$, by item (i) of Theorem \ref{theo:hom_mfd_group_infinity}. By Propositions \ref{prop:characterization_top_manifolds} and \ref{prop:homotopy_type_small_metric_spheres}, $\X$ is indeed a topological manifold. The thesis follows by \cite[Theorem 4.7]{Nagano2022}.
\end{proof}

\vspace{2mm}
\begin{proof}[Proof of Theorem \ref{theo:intro_homology_qi_R^n}]
    By Proposition \ref{prop:QI_to_R^n_has_biLip_asymptotic_cones}, $\X$ admits the cone at infinity $C_\infty \X$, which is moreover bi-Lipschitz equivalent to $\R^n$. Therefore, $\dim(C_\infty \X) = \dim(\R^n) = n$. Furthermore, by Proposition \ref{prop:properties_asymptotic_cone}, $\dim(C_\infty\X) = \dim(\X) = m$, hence $n=m$. If $n\ge 3$, Proposition \ref{prop:QI_to_R^n_has_biLip_asymptotic_cones} also implies that $\partial_T\X$ is homotopy equivalent to $\mathbb{S}^{n-1}$, hence simply connected. Therefore, Corollary \ref{cor:hom_mfds_homeo_to_R^n} implies that $\X$ is homeomorphic to $\R^n$. If $n=2$, then $\X$ is already a topological manifold and every contractible topological $2$-manifold is homeomorphic to $\R^2$.
\end{proof}

We end this section with a comment on Theorem \ref{theo:hom_mfd_group_infinity}. If $(\X,\sfd)$ is a proper, $\CAT(0)$ homology $n$-manifold then for every $x\in \X$ and every $0<r<R$, the contraction map $c_r^R\colon S_R(x) \to S_r(x)$ induces a map in homology which is an isomorphism. In general, this map is not a homotopy equivalence, actually the map induced at the level of fundamental groups is not an isomorphism in general. By point (ii) of Theorem \ref{theo:hom_mfd_group_infinity} this means that the map induced at the level of fundamental groups is not injective. 
There are examples of topological manifolds where this happens: in \cite[Theorem 5b.1]{DavisJanuszkiewicz1991} it is presented a proper, $\CAT(0)$ topological manifold which is not simply connected at infinity. This is precisely due to the fact that the contraction map between two metric spheres induces a map at the level of fundamental groups which is not injective.

\vspace{2mm}
\section{The proof of Theorems \ref{theo:intro_2}, \ref{theo:intro_2_volume} and \ref{theo:intro_sphere_degree}}
\label{sec-DE}

We start with Theorem \ref{theo:intro_sphere_degree}, which is the key to proving the Theorems \ref{theo:intro_2} and \ref{theo:intro_2_volume}.\\
It might be helpful,  for the reader, to visualize first the idea of proof of   Theorems   \ref{theo:intro_sphere_degree} and \ref{theo:intro_2_volume} in dimension $n=2$, which we give separately in the Appendix. 


\begin{T5}
Let $f\colon (\Y,\sfd) \to (\Y',\sfd')$ be a $1$-Lipschitz, surjective map between compact, geodesically complete, $\CAT(1)$ spaces of dimension $n$.
    Assume that $\Y$ is a homology manifold homotopy equivalent to $\mathbb{S}^n$, with $\mathcal{H}^n(\Y) < 2\cdot\mathcal{H}^{n}(\mathbb{S}^n)$: then,  $\Y'$ is also a homology $n$-manifold which is homotopy equivalent to $\mathbb{S}^n$.
\end{T5}

\begin{proof}
    For every $\Y'$ as in the statement, we define
    $$\textup{sph}(\Y'):= \max\{m\in\N\,:\, \Y' \cong \mathbb{S}^m \ast {\rm Z}\},$$
    (possibly, with ${\rm Z} = \emptyset$); in other words, $\textup{sph}(\Y')$ denotes the maximal round spherical factor of $\Y'$ in the unique decomposition of $\Y'$ in spherical joins provided by \cite[Corollary 1.2]{Lytchak2005}. Notice that the factor ${\rm Z}$ is a again a compact, geodesically complete $\CAT(1)$ space,   by \cite[Corollary II.3.15]{BH09}.   We will prove the statement by induction on $k:=n-\textup{sph}(\Y')$. \\
    If $k = 0$ then $\Y'$ is isometric to $\mathbb{S}^n$ and there is nothing to prove. \\
       If $k=1$ then $\Y' = \mathbb{S}^{n-1}\ast F$, where $F$ is a finite set of points since $\dim(\Y') = n$, and  $\sfd(p,q)=\pi$ for different $p,q \in F$. 
    Now, $\mathbb{S}^{n-1}\ast F$ consists of $\#F$ isometric copies of a spherical hemisphere, so  
    $$\#F\cdot \mathcal{H}^n(\mathbb{S}^n)/2=\mathcal{H}^n(\Y') \le \mathcal{H}^n(\Y) < 2\cdot \mathcal{H}^n(\mathbb{S}^n) .$$ Therefore  $\#F \le 3$. Notice that  $\#F \neq 1$, otherwise   $\Y$ would not be geodesically complete.  Moreover, $\#F \neq 2$ or   $\Y' \cong \mathbb{S}^n$, contradicting the definition of $\textup{sph}(\Y')$. \\
    Let us also exclude   that $\#F =3$, which will imply that $k \ge 2$ necessarily. Actually, if  $\#F = 3$ then  the space $\mathbb{S}^{n-1} \ast F$ would be isometric to the gluing of three spherical  hemispheres of  $\mathbb{S}^{n}$  along  their common boundary. Denoting by $H_i$  these   hemispheres, for $i=1,2,3$,    let us consider the three spheres $S_{ij} := H_i\cup H_j$ for every two distinct $i,j\in \{1,2,3\}$, and    
consider  the $1$-Lipschitz map $\rho\colon \Y'\to S_{12}$,
which is  the identity on $S_{12}$ and which coincides with the reflection    along the equator of $S_{23}$   on the hemisphere $H_3$.  
 Then,  $\hat{f} = \rho\circ f \colon \Y \to  S_{12}   \cong  \mathbb{S}^n$ is a $1$-Lipschitz surjective map between compact, connected, oriented, $\CAT(1)$ homology $n$-manifolds.  Corollary \ref{cor:degree_volume_inequality}  implies that
    $$2\cdot \mathcal{H}^n(\mathbb{S}^n) > \mathcal{H}^n(\Y) \ge \vert \textup{deg}(\hat{f})\vert \cdot \mathcal{H}^n(\mathbb{S}^n)$$
    therefore $\vert \textup{deg}(\hat{f})\vert \le 1$. 
    Actually, $\vert \textup{deg}(\hat{f})\vert = 1$ necessarily,  or $\textup{deg}(\hat{f}) =\textup{deg}_2(\hat{f})  = 0$ and then we would have  $\#\hat{f}^{-1}(y') \ge 2$ for  $\mathcal{H}^n$-a.e.  $y' \in S_{12}$ by Proposition \ref{prop:degree_area_formula_CAT}.(i)  (since   by construction $\hat f$ has at least two preimages for every $y'$ in the interior of $H_2$, which has   positive $\mathcal{H}^n$-measure). 
    By the area formula   \ref{prop:area_formula}  it would then follow
    $$2\cdot\mathcal{H}^n(\mathbb{S}^n)>\mathcal{H}^n(\Y)\ge \int_\Y \mathbf{J}f(y)\,\d\mathcal{H}^n(y) = \int_{S_{12}} \# \hat f^{-1}(y')\,\d\mathcal{H}^{n}(y') \ge 2\cdot\mathcal{H}^n(S_{12}) = 2\cdot\mathcal{H}^n(\mathbb{S}^n),$$
     which is a contradiction.
 Proposition \ref{prop:degree_area_formula_CAT}(i) then would imply that $\#\hat{f}^{-1}(y')\ge 3$ for every $y'\in H_2 \subseteq S_{12}$, so  applying again Proposition \ref{prop:area_formula} we would obtain
    $$2\cdot\mathcal{H}^n(\mathbb{S}^n)>\mathcal{H}^n(\Y)
    \ge  \int_{S_{12}} \# \hat f^{-1}(y')\,\d\mathcal{H}^{n}(y') \ge
    \mathcal{H}^{n}(H_1) + 3\cdot\mathcal{H}^n(H_2) = 2\cdot\mathcal{H}^n(\mathbb{S}^n),$$
    which is again a contradiction.
    
    Therefore,  $k\ge 2$ and now  we perform the induction. We   write $\Y' \cong \mathbb{S}^m \ast {\rm Z}$, where $m:=n-k$. 
    We choose any $z\in {\rm Z}$ and we see it as a point of $\Y'$. Lemma \ref{lemma:space_of_directions_to_spherical_join} tells us that $\Sigma_{z}\Y'\cong \mathbb{S}^m\ast \Sigma_z {\rm Z}$. Let $g \colon \Y' \to \mathbb{S}^0\ast \Sigma_{y'}\Y' \cong \mathbb{S}^{m+1}\ast\Sigma_z{\rm Z}$ be the $1$-Lipschitz surjective map given by Corollary \ref{cor:map_Y_to_S^0_join_Sigma_y}. We consider the $1$-Lipschitz surjective map $g\circ f \colon \Y \to \mathbb{S}^{m+1}\ast\Sigma_z{\rm Z}$. By the inductive hypothesis we deduce that $\mathbb{S}^{m+1}\ast\Sigma_z{\rm Z}$ is a homology $n$-manifold which is homotopy equivalent to $\mathbb{S}^n$; so, applying  Kunneth's formula for the join of two spaces (cp. \cite{milnor-kunneth}, Lemma 2.1)  we obtain that $\Sigma_z{\rm Z}$ is a homology $(k-2)$-manifold with the same homology of $\mathbb{S}^{k-2}$. By Proposition \ref{prop:characterization_hom_manifolds}, this implies that ${\rm Z}$ is a homology $(k-1)$-manifold. 
    Since $\Sigma_{z}\Y'\cong \mathbb{S}^m\ast \Sigma_z {\rm Z}$, again by Kunneth's formula and Proposition \ref{prop:characterization_hom_manifolds} we conclude that $\Y'$ is a homology $n$-manifold. \\ The next step is to show that $\Y'$ is orientable. If it is not orientable then $\textup{deg}_2(f) = 0$ by Lemma \ref{lemma:map_orientable_to_non_orientable_0_degree}. By Proposition \ref{prop:degree_area_formula_CAT} we have that $\#f^{-1}(y')$ is even for $\mathcal{H}^n$-a.e. $y'\in \Y'$. Since $f$ is surjective, we have $\#f^{-1}(y') \ge 2$ for $\mathcal{H}^n$-a.e. $y'\in \Y'$. We now apply Proposition \ref{prop:area_formula} to the $1$-Lipschitz map $f\colon \Y \to \Y'$ obtaining
     \begin{equation}\label{inequalityLN-N}
     2\cdot\mathcal{H}^n(\mathbb{S}^n)>\mathcal{H}^n(\Y) \ge \int_\Y \mathbf{J}f(y)\,\d\mathcal{H}^n = \int_{\Y'} \#f^{-1}(y')\,\d\mathcal{H}^{n} \ge 2\cdot\mathcal{H}^n(\Y) \ge 2\cdot\mathcal{H}^n(\mathbb{S}^n).
     \end{equation}
    where the last inequality follows by \cite[Proposition 11.3]{LN19} (originally proved  in \cite{Na02}). This is again a contradiction, which implies that $\Y'$ is orientable. Again, the last inequality in \eqref{inequalityLN-N} above, combined with Corollary \ref{cor:degree_volume_inequality}, yields
    $$2\cdot\mathcal{H}^n(\mathbb{S}^n) > \mathcal{H}^n(\Y) \ge \vert \textup{deg}(f)\vert\cdot \mathcal{H}^n(\Y') \ge 2\cdot \mathcal{H}^n(\mathbb{S}^n)$$
    which implies that $\vert \textup{deg}(f)\vert = 1$. 
    Then, a standard application of Poincar\'e duality, which holds for homology manifolds as explained in Section \ref{sec:orientability}, shows that  the homomorphism $f_*\colon H_*(\Y)\to H_*(\Y')$ is surjective in every degree:  in fact, for every $a \in H_k(Y')$ we have $a= \alpha \cap \xi_X  $, for some $\alpha \in H^{n-k}(Y')$ (notice that these are the usual singular homology groups and the usual cap product, since our spaces  are compact and locally contractible), so by naturality $f_\ast (f^\ast \alpha \cap \xi_Y)= \alpha \cap f_\ast \xi_Y = a$.
     Therefore, using that $H_\ast(\Y)\simeq H_\ast(\mathbb{S}^n)$ and that $H_n(\Y') = \Z$ (since $\Y'$ is orientable) we obtain that $H_\ast(\Y') \simeq H_\ast(\mathbb{S}^n)$. Now, as explained before Proposition \ref{prop:CAT(0)_hom_mfds_to_top_mfds_simply_connected}, $\Y'$ is homotopy equivalent to $\mathbb{S}^{n}$ if and only if it is simply connected. To show this, consider the universal cover $\pi\colon  \widetilde{\Y}'   \to \Y'$ and lift $f$ to  $\tilde{f}\colon \Y\to \widetilde{\Y}'$.
    By Lemma \ref{lemma:degree_is_multiplicative} we  get that $1 =\vert \textup{deg}(f) \vert = \vert \textup{deg}(\pi)\cdot \textup{deg}(\tilde{f}) \vert$, hence  $\textup{deg}(\pi) = \pm 1$. It follows that   $\pi$ is a homeomorphism, by Remark \ref{rmk:degree_covering}, so $Y'$ is simply connected and homotopically equivalent to $\mathbb S^n$.
\end{proof}

\noindent We can now prove   Theorems \ref{theo:intro_2} and \ref{theo:intro_2_volume}, which we restate below for the reader’s convenience.

\begin{T6}
    Let $(\X,\sfd)$ be a proper, geodesically complete, $\CAT(0)$ space which is quasi-isometric to $\R^n$. If $\mathcal{H}^{n-1}(\partial_T\X) <2 \cdot \mathcal{H}^{n-1}(\mathbb{S}^{n-1})$ then $\X$ is homeomorphic to $\R^n$.
\end{T6}

\begin{proof}
    By Proposition \ref{prop:QI_to_R^n_has_biLip_asymptotic_cones} we have that $\partial_T\X$ is a homology $(n-1)$-manifold which is homotopy equivalent to $\mathbb{S}^{n-1}$ and satisfies $\mathcal{H}^{n-1}(\partial_T\X) < 2\cdot\mathcal{H}^{n-1}(\mathbb{S}^{n-1})$ by assumption. 
    Proposition \ref{prop:properties_asymptotic_cone} implies that $\dim(\X) = n$. Let $x\in \X$ be an arbitrary point and let $\partial \log_x\colon \partial_T\X \to \Sigma_x\X$ be the $1$-Lipschitz surjective map defined in \eqref{eq:defin_log_from_Tits_boundary}. By applying Theorem \ref{theo:intro_sphere_degree}, we deduce that $\Sigma_x\X$ is 
    homotopy equivalent to $\mathbb{S}^{n-1}$, provided that $\dim(x) = n$. Therefore every  $x\in \X$ with $\dim(x) = n$ has the property that $T_x\X$ is purely $n$-dimensional, by Lemma \ref{lemma:CAT_homology_are_purely_dimensional}. 
    The proof of \cite[Proposition 8.1]{LN-finale-18} shows that then $\X$ is purely $n$-dimensional. Therefore, for every $x\in \X$, the space of directions $\Sigma_x\X$ is 
    homotopy equivalent to $\mathbb{S}^{n-1}$. In particular, $\X$ is a homology manifold by Proposition \ref{prop:characterization_hom_manifolds} (actually, a topological manifold, by Proposition \ref{prop:characterization_top_manifolds}). 
Therefore, $\X$ is homeomorphic to $\R^n$ by Theorem \ref{theo:intro_homology_qi_R^n}.
\end{proof}

\begin{T7}
    If $(\X,\sfd)$ is a proper, geodesically complete, $\CAT(0)$ space which is $(L,C)$-quasi-isometric to $\R^n$ for $L<2^{1/n}$, then $\X$ is homeomorphic to $\R^n$.
\end{T7}

\begin{proof}
    Arguing as in the  proof of Theorem \ref{theo:intro_qi_implies_biLip} we obtain that $\textup{as-vol}_n(\X) < 2$. Recalling \eqref{eqasvol-nagano} we have $\mathcal{H}^{n-1}(\partial_T\X) < 2\cdot \mathcal{H}^{n-1}(\mathbb{S}^{n-1})$. The conclusion then follows from Theorem \ref{theo:intro_2_volume}.
\end{proof}

\vspace{2mm}
\section{The proof of Theorem \ref{theo:intro_negative_answer} and Corollary \ref{cor:intro_hom_mfd_not_open}}
\label{sec:optimality_constant_2}

Until now, we gave positive answers to Question \ref{question:intro} under different assumptions. Now, we provide a negative one.

\begin{T3}
         For any $k\in \N^\ast$ there exists a proper, geodesically complete, purely $n$-dimensional, $\CAT(0)$  simplicial complex $(\X_k,\sfd_k)$ which is $(2,\frac{1}{k})$-quasi-isometric to $\R^n$, but   is not a homology manifold. Moreover, $\mathcal{H}^{n-1}(\partial_T\X_k) = 2\cdot\mathcal{H}^{n-1}(\mathbb{S}^{n-1})$ for every $k$.
\end{T3}

\begin{proof}
    First, we give an example in dimension $n=2$ and then we generalize   to every dimension.
    
\noindent  Let   $(S,\sfd)$  be a circle of total length $4\pi$, with distance   truncated to $\pi$;
      it is a compact, geodesically complete, $\CAT(1)$ space with diameter $\pi$. Let $\Y = \textup{Cone}(S)$ be the Euclidean cone over $S$, which is a proper, geodesically complete, $\CAT(0)$ space. 
    Observe that $S$ is $2$-biLipschitz homeomorphic to the standard circle $\mathbb{S}^1$, so $\Y$ is $2$-biLipschitz homeomorphic to $\R^2$. \\
      Now, choose   two points $v_1,v_2 \in S$ at    distance $2\pi$, with respect to the initial length metric, and consider the geodesic segments $\gamma_1(t) = tv_1$, $\gamma_2(t) = tv_2$ of $\Y$, for $t\in [0,\frac{1}{k}]$.
      We define $\X_k:=\Y/\!\sim$, by identifying     $\gamma_1 (t)\sim \gamma_2 (t)$   for   $t\in [0,\frac{1}{k}]$, and  equip $\X_k$ with the quotient metric. \\
      Geometrically, $\X_k$ is just the  degree $2$ ramified   covering of   $\mathbb R^2$ at the origin $O$,  locally isometric outside $O$, where we identified two geodesic segments of length $1/k$  from $O$, making  angle  $2\pi$.\\
To show that   $\X_k$   is a proper, $\CAT(0)$, geodesically complete space, it is useful to see   $\Y$ as the gluing of two (non-geodesically complete) $\CAT(0)$ spaces. 
Namely, we look at $S$ as the union of two half-circles of length $2\pi$, so the  Euclidean cones over each of these two half-circles are proper, $\CAT(0)$ spaces  with boundary $\Y_1$ and $\Y_2$.  Notice that each boundary $\partial \Y_i$ is a geodesic line, and  that  $\Y$ is obtained by gluing $\Y_1$ and  $\Y_2$ along these  boundary geodesic lines. \\
We may moreover assume that the segments $\gamma_1$ and  $\gamma_2$ make  angle $\pi$ with these geodesic lines. Therefore, the resulting space $\Y$  is $\CAT(0)$, because it is the gluing of two $\CAT(0)$ spaces along two convex subsets  
(the union of $\gamma_1$  with the boundary geodesic line $\partial \Y_1$   is a convex subset of $\Y_1$, since the angle between them  is exactly $\pi$; and the same for  $\gamma_2 \cup \partial \Y_2$),
see \cite[Theorem II.11.1]{BH09}.
 It is also clearly geodesically complete and proper. \\
The quotient map $\pi \colon \Y \to \X_k$ is a $(1,\frac{2}{k})$-quasi-isometry; therefore, $\X_k$ is $(2,\frac{2}{k})$-quasi-isometric to $\R^2$. Moreover, by construction we have $\mathcal{H}^{1}(\partial_T\X_k) = \mathcal{H}^{1}(S) = 2\cdot \mathcal{H}^{1}(\mathbb{S}^1)$. \\
  Notice that $\X_k$ is not homeomorphic to $\R^2$, since it has non-manifold points (all the points of the geodesic segment obtained by identifying $\gamma_1$ with $\gamma_2$).\\
  We also remark that $\X_k$ admits a structure of simplicial complex, and the metric is piecewise Euclidean on each simplex.  \\
    For $n\ge 3$, we just consider $\X_k \times \R^{n-2}$.
\end{proof}

 Figure \ref{fig:construction_counterexample} shows the construction of the space $\X_k$ in dimension $n=2$, while Figure \ref{fig:metric_spheres} highlights the structure of some metric spheres in   $\X_k$. In particular, the homology of large metric spheres is different from the homology of small ones.
 \\
    \begin{figure}[h!]

    \begin{tikzpicture}[scale=1.5]
    \def\radius{1}

    \draw[thick] (0,0) circle (\radius);

    \draw[dashed, black] (0, -\radius - 1) -- (0, \radius + 1) node[above] {};

    \node at (1.2*\radius, 0) {$2\pi$};

    \node at (1.2*\radius, 1.2*\radius) {$\Y_2$};

    \node at (-1.2*\radius, 0) {$2\pi$};

    \node at (-1.2*\radius, 1.2*\radius) {$\Y_1$};


    \draw[thick] (4,0) circle (\radius);

    \draw[dashed, black] (4, -\radius - 1) -- (4, \radius + 1) node[above] {};

    \node at (4+1.2*\radius, 0) {$2\pi$};

    \node at (4+1.2*\radius, 1.2*\radius) {$\Y_2$};

    \node at (4-1.2*\radius, 0) {$2\pi$};

    \node at (4 -1.2*\radius, 1.2*\radius) {$\Y_1$};

    \draw[thick, red] (4, 0) -- (3.5, 0) node[above] {};

    \node at (3.3,0) {$\gamma_1$};

    \node at (4.7,0) {$\gamma_{2}$};

    \draw[thick, green] (4, 0) -- (4.5, 0) node[above] {};

    \draw[] (4,0) circle (\radius/8);

    \node at (4-\radius/4, \radius/4) {$\pi$};

    \node at (4-\radius/4, -\radius/4) {$\pi$};

    \node at (4+\radius/4, \radius/4) {$\pi$};

    \node at (4+\radius/4, -\radius/4) {$\pi$};
\end{tikzpicture}

    \caption{On the left, the construction of $\Y$ as the gluing of the two $\CAT(0)$ spaces $\Y_1$ and $\Y_2$ along their common boundary. On the right, the construction of   $\X_k$: the geodesic segments $\gamma_1$ (in red) and $\gamma_{2}$ (in green) are identified.\\ The resulting space is still $\CAT(0)$ because  all the displayed angles have value $\pi$.}
    \label{fig:construction_counterexample}
\end{figure}
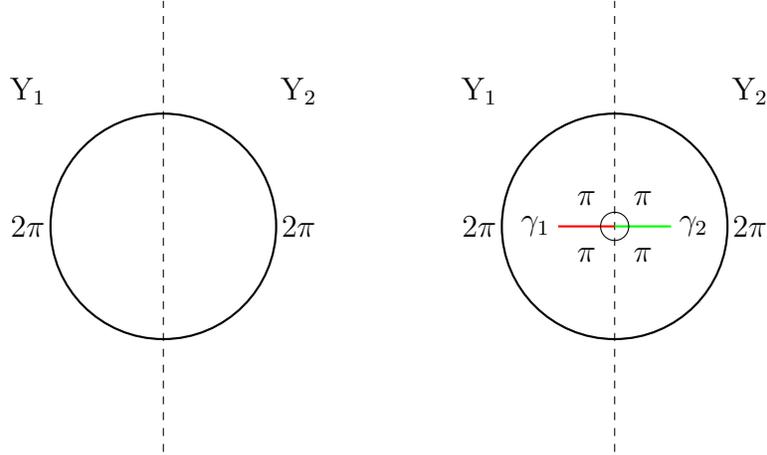
\begin{figure}[h!]
\begin{tikzpicture}[scale=1]
    
    \draw[] (0, -2.5) -- (0, 2.5) node[above] {};

    \draw[red, thick] (-1,0) -- (0,0) node[above] {};

    \draw[green, thick] (1,0) -- (0,0) node[above] {};

    \fill (-1, 0) circle (1.5pt) node[below left] {$x$};
    \fill (1, 0) circle (1.5pt) node[below right] {$x$};

    \draw[black] (-1, 0) ++(300:2) arc (300:60:2);

    \draw[black] (1, 0) ++(-120:2) arc (-120:120:2);

    \node at (1.3,1.3) {$S_R(x) \simeq \mathbb{S}^1$};


    \draw[] (8, -2.5) -- (8, 2.5) node[above] {};

    \draw[red, thick] (7,0) -- (8,0) node[above] {};

    \draw[green, thick] (9,0) -- (8,0) node[above] {};

    \fill (7, 0) circle (1.5pt) node[below left] {$x$};
    \fill (9, 0) circle (1.5pt) node[below right] {$x$};

    \node at (9.3,1) {$S_r(x) \not\simeq \mathbb{S}^1$};

    \draw[] (7,0) circle (0.5);

    \draw[] (9,0) circle (0.5);

    \fill (7.5, 0) circle (1.5pt) node[below right] {$y$};
    \fill (8.5, 0) circle (1.5pt) node[below left] {$y$};
    
\end{tikzpicture}

\caption{The figure shows two metric spheres around the point $x$. \\
When the radius is large (as shown on the left), the metric sphere around $x$ is homeomorphic to $\mathbb{S}^1$; when the radius is small (as  shown on the right), the metric sphere is homeomorphic to the gluing of the two circles by their common point $y$. \\
In particular, the metric sphere is not homeomorphic to $\mathbb{S}^1$.\\
The fiber of the point $y$ along the contraction map $c_r^R\colon S_R(x) \to S_r(x)$ consists of two points; in particular, it is not acyclic since it is not trivial in degree $0$.}
\label{fig:metric_spheres}
\end{figure}
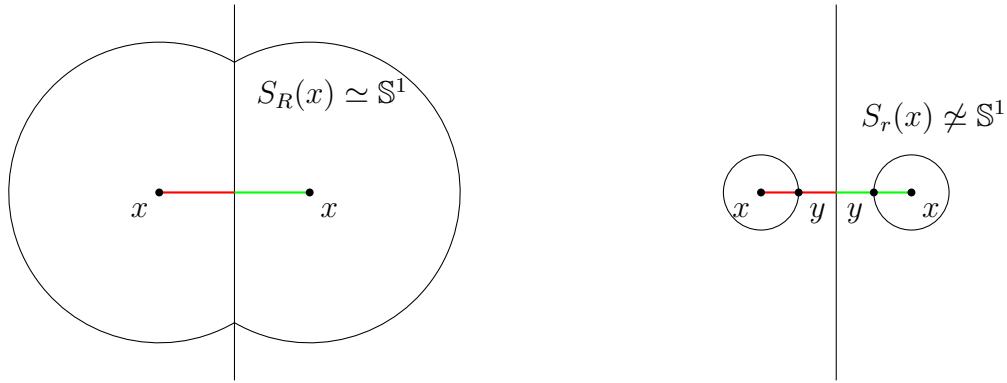

\begin{T4}
    The class of proper, $\CAT(0)$ homology (or topological) manifolds is not an open subset of the class of all proper, geodesically complete  $\CAT(0)$ spaces, with respect to the Gromov-Hausdorff topology.
\end{T4}

\begin{proof}
    Let $(\X_k,\sfd_k)$ be the sequence considered   in the proof of Theorem \ref{theo:intro_negative_answer}, for $k \to \infty$. \\Then, one of its pointed limits is the space $\Y$, which is biLipschitz homeomorphic to $\R^2$, hence a manifold; but each $\X_k$ is not a homology manifold.
\end{proof}

\vspace{2mm}
\section*{Appendix: the proofs of Section \ref{sec-DE} in dimension $2$}

In this appendix we give a sketch of a more intuitive version of the proofs of Theorems  \ref{theo:intro_sphere_degree} and \ref{theo:intro_2_volume}  of Section \ref{sec-DE}, in dimension $n=2$. In order to do so, we start by describing the possible local geometry of $2$-dimensional geodesically complete $\CAT(0)$ spaces that are $(L,C)$-quasi-isometric to $\R^2$ with $L<2$. For this reason we introduce the following space.

\begin{defin}
    Let $a\ge b \ge c >0$ be three positive real numbers. We denote by $G(a,b,c)$ the graph obtained by taking two vertices $v,w$ and three edges connecting $v$ to $w$ of length respectively $a,b$ and $c$. The three edges are denoted respectively by $e_a,e_b,e_c$; let moreover $C_{ac}$ be the loop composed by the concatenation of the edges $e_a$ and $e_c$, and  $C_{bc}$ the one composed by  the edges   $e_b$ and $e_c$. Figure \ref{fig:graph_a_b_c} shows a representation of this graph.
\end{defin}

\begin{figure}[h!]

\begin{tikzpicture}[scale = 1.8]

\def\rRight{0.8} 
\def\rLeft{1.2}  

\draw[black, thick] (0,0) coordinate (B) -- (0,1) coordinate (A);

\pgfmathsetmacro{\cXRight}{sqrt(\rRight^2 - 0.5^2)}
\coordinate (C_R) at (\cXRight, 0.5);

\pgfmathsetmacro{\angleAR}{atan2(1-0.5, 0-\cXRight)} 
\pgfmathsetmacro{\angleBR}{atan2(0-0.5, 0-\cXRight)} 

\draw[blue, thick] (A) arc (\angleAR:\angleBR:\rRight);

\pgfmathsetmacro{\cXLeft}{sqrt(\rLeft^2 - 0.5^2)}
\coordinate (C_L) at (-\cXLeft, 0.5);

\pgfmathsetmacro{\angleBL}{atan2(0-0.5, 0-(\cXLeft))} 
\pgfmathsetmacro{\angleAL}{atan2(1-0.5, 0-(\cXLeft))} 

\draw[red, thick] (B) arc (180 - \angleBL:180 -\angleAL:\rLeft);

\node[right] at (1.4,0.5) {$b$};
\node[left] at (-2,0.5) {$a$};
\node[left] at (0,0.5) {$c$};

\fill (0, 0) circle (1.5pt) node[right] {$w$};
\fill (0, 1) circle (1.5pt) node[left] {$v$};

\end{tikzpicture}

\caption{The figure shows the graph $G(a,b,c)$ with two vertices $v,w$ and three edges of length $a$ (in red), $b$ (in blue) and $c$ (in black).}
\label{fig:graph_a_b_c}
\end{figure}
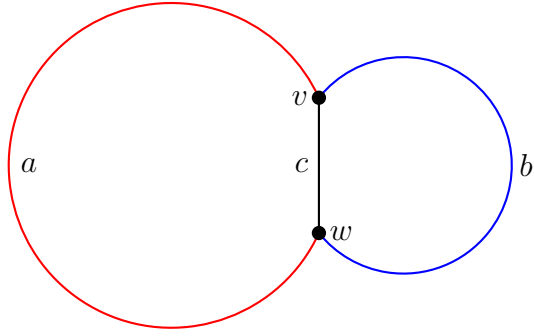

Given $\ell >0$, we denote by $\mathbb{S}^1_\ell$ the metric space $(\mathbb{S}^1, \sfd_\ell)$ where $\sfd_\ell$ is the only geodesic distance on $\mathbb{S}^1$ with total length $\ell$. The following proposition describes the structure of the possible space of directions $\Sigma$ of a $2$-dimensional $\CAT(0)$-space,  with $\mathcal{H}^{1} (\Sigma) < 2 \cdot\mathcal{H}^{1}(\mathbb{S}^{1})$. 

\begin{prop}
\label{prop:CAT(1)_graphs_small_volume}
    Let $(\Y,\sfd)$ be a compact, connected, geodesically complete, $\CAT(1)$ space with $\dim(\Y) = 1$. If $\mathcal{H}^1(\Y) < 4\pi$ then $\Y$ is isometric to either $\mathbb{S}^1_\ell$ with $\ell < 4\pi$ or to $G(a,b,c)$ with $a+b+c < 4\pi$ and $a+b+2c \ge 4\pi$.
\end{prop}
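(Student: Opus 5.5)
The plan is to use the classical description of $1$-dimensional $\CAT(1)$ spaces as metric graphs. Since $\Y$ is compact, geodesically complete and $1$-dimensional, every point has a neighbourhood isometric to a finite star: the space of directions at each point is a finite set of cardinality at least $2$, by geodesic completeness. The total length $\mathcal{H}^1(\Y)<4\pi$ is finite, so $\Y$ is a finite metric graph with all vertices of valence $\ge 3$, after erasing valence-$2$ points. The $\CAT(1)$ condition for a metric graph is exactly that every embedded cycle has length at least $2\pi$ (cf. \cite{BH09}). If there are no vertices, $\Y$ is a circle $\mathbb{S}^1_\ell$ with $2\pi\le \ell<4\pi$, and we are done.

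Otherwise, write $V$ and $E$ for the numbers of vertices and edges, and $\beta=E-V+1$ for the first Betti number. I will show that $\beta=2$ and that there are two vertices of valence $3$. First, $\beta\ge 2$: a tree would have leaves, contradicting completeness, and the case $\beta=1$ would force all vertices to have valence $2$. Next, I bound $\beta$ from above by a cycle-counting argument. Every edge lies on some embedded cycle, since there are no leaves and bridges would create leaves after cutting. Let $H_1(\Y;\Z/2\Z)$ have dimension $\beta$. The sum over all nonzero classes representable by embedded cycles should be compared with total length. Concretely, each edge lies in exactly half of the $2^\beta-1$ nonzero elements of the cycle space, counted with multiplicity $2^{\beta-1}$. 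Summing lengths over all $2^\beta-1$ nonzero cycle-space elements gives $2^{\beta-1}\mathcal{H}^1(\Y)$. Each nonzero element is an edge-disjoint union of embedded cycles, so its length is at least $2\pi$. This yields $2^{\beta-1}\mathcal{H}^1(\Y)\ge (2^\beta-1)2\pi$, i.e. $\mathcal{H}^1(\Y)\ge 4\pi(1-2^{-\beta})$. This alone does not exclude large $\beta$, so I need a sharper count: restrict to a basis-type family. For $\beta\ge 3$, I would pick a vertex $v$ of valence $\ge3$ and use the structure near a spanning tree to find three cycles covering each edge at most twice, giving $2\mathcal{H}^1(\Y)\ge 3\cdot 2\pi$. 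Finding cycles that cover with multiplicity $\le 2$ while staying sufficiently many is the step I expect to be the main obstacle. I will try first to show directly that $\beta\ge3$ forces $\mathcal{H}^1\ge 4\pi$ via a theta-subgraph plus an extra ear.

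Once $\beta=2$ with all valences $\ge 3$, the relation $\sum(\deg-2)=2(\beta-1)=2$ leaves two cases. Either there is one vertex of valence $4$ (a figure eight), or there are two vertices of valence $3$ (a theta graph). A figure eight with loops of lengths $\ge 2\pi$ has length $\ge 4\pi$, which is excluded. The theta graph $G(a,b,c)$ with $a\ge b\ge c$ has cycles of lengths $a+b$, $a+c$ and $b+c$, and the $\CAT(1)$ condition reduces to $b+c\ge 2\pi$. To match the statement as phrased, I note that $a+b+2c\ge (a+c)+(b+c)\ge 4\pi$, since both $a+c$ and $b+c$ are at least $b+c\ge 2\pi$. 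Together with $a+b+c=\mathcal{H}^1(\Y)<4\pi$, this concludes the proof.
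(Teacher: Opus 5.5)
Your reduction to a finite metric graph with all valences $\ge 3$ and all embedded cycles of length $\ge 2\pi$, your exclusion of $\beta\le 1$, and your final computation for $G(a,b,c)$ are fine. They match the paper's sketch, which simply asserts that these constraints plus the length bound force a circle or a theta graph. The genuine gap is the step you flag yourself: nothing in the proposal rules out $\beta\ge 3$, and that is the entire content of the assertion.

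The sharper count you suggest cannot close this gap. Three cycles covering each edge at most twice give $2\,\mathcal{H}^1(\Y)\ge 3\cdot 2\pi$, i.e.\ only $\mathcal{H}^1(\Y)\ge 3\pi$, which is compatible with $\mathcal{H}^1(\Y)<4\pi$. To reach $4\pi$ you need either two edge-disjoint embedded cycles, or four embedded cycles covering each edge at most twice. The second alternative cannot be avoided: $K_4$ with all edges of length $2\pi/3$ is trivalent with $\beta=3$, girth exactly $2\pi$ and length exactly $4\pi$, yet it contains no two edge-disjoint cycles.

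One way to close the gap is the following.
\begin{itemize}
\item If $\beta\ge 3$, take a spanning tree plus three further edges and prune leaves repeatedly. This gives a connected leafless subgraph with $\beta=3$, length $\le\mathcal{H}^1(\Y)$, and all embedded cycles still of length $\ge 2\pi$.
\item After suppressing valence-$2$ points, $\sum_v(\deg v-2)=2(\beta-1)=4$. So the valences are $(6)$, $(5,3)$, $(4,4)$, $(4,3,3)$ or $(3,3,3,3)$.
\item If there is a loop, removing it leaves a connected graph with $\beta=2$, hence a second, edge-disjoint cycle. The cases $(6)$ and $(5,3)$ always have a loop.
\item In the loopless cases $(4,4)$ and $(4,3,3)$, a direct count of edge multiplicities gives two edge-disjoint $2$-cycles.
\item In the cubic case, a double edge (a triple edge would disconnect the graph) can be removed, leaving $4$ edges on $4$ vertices, hence a cycle.
\end{itemize}
In all these cases $\mathcal{H}^1(\Y)\ge 4\pi$. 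The only remaining graph is the simple cubic $K_4$. Its four triangles cover every edge exactly twice, so $2\,\mathcal{H}^1(\Y)\ge 4\cdot 2\pi$, again a contradiction.

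There are also two smaller slips. First, for $\beta=2$ with two trivalent vertices you obtain not only the theta graph but also the dumbbell (two loops joined by an edge). It must be excluded as well, which the same $4\pi$ bound does. Second, the dumbbell shows that your claim that every edge lies on an embedded cycle is false. This is harmless for your cycle-space inequality: bridges contribute to $\mathcal{H}^1(\Y)$ but not to the sum over cycle-space elements, so that sum is $\le 2^{\beta-1}\mathcal{H}^1(\Y)$ rather than equal to it.
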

\begin{proof}[Sketch of the proof]
    $\Y$ is a connected, finite graph equipped with the path metric. Now, we have two constraints. On the one hand, by geodesic completeness, every vertex has degree at least $2$. On the other, by the $\CAT(1)$ condition, the length of every loop is at least $2\pi$. Using this two constraints it is not difficult to show that $\Y$ is isometric to either $\mathbb{S}^1_\ell$ for some $2\pi \le \ell < 4\pi$ (namely, when every vertex has degree $2$) or to $G(a,b,c)$. The condition $a+b+c < 4\pi$ follows from the bound on the total length, while the bound $a+b+2c \ge 4\pi$ comes from the fact that the two loops $C_{ac}$ and $C_{bc}$
    must have length at least $2\pi$.
\end{proof}

The next result corresponds to Theorem \ref{theo:intro_sphere_degree} in dimension $2$. The original proof was suggested by Noa Vikman. 
We just sketch it.
\begin{prop}
    \label{prop:maps_on_G(a,b,c)}
    Let $f\colon \mathbb{S}^1_\ell \to G(a,b,c)$ be a $1$-Lipschitz surjective map. Then $\ell \ge a+b+2c$.
\end{prop}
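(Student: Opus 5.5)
The plan is a parity argument: a closed curve that covers the whole theta graph must pass over at least one edge an even number of times, hence at least twice. Combined with a length–multiplicity (area formula) estimate, this forces $\ell \ge a+b+c+c$.

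\emph{Step 1 (length versus multiplicity).} I view $f$ as a closed curve $\gamma\colon \mathbb{S}^1_\ell \to G(a,b,c)$ of length at most $\ell$, since $f$ is $1$-Lipschitz. The one-dimensional area formula (Proposition \ref{prop:area_formula}, or simply the Banach indicatrix) gives
$$\ell=\mathcal{H}^1(\mathbb{S}^1_\ell)\ \ge \int_{\mathbb{S}^1_\ell}\mathbf{J}f\,\d\mathcal{H}^1=\int_{G(a,b,c)}\#f^{-1}(y)\,\d\mathcal{H}^1(y).$$
By surjectivity, $\#f^{-1}(y)\ge 1$ everywhere. It therefore suffices to find one edge $e$ on which $\#f^{-1}(y)\ge 2$ for a.e.\ $y\in e$. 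That gives $\ell\ge a+b+c+\mathcal{H}^1(e)\ge a+b+2c$, because $c$ is the shortest edge.

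\emph{Step 2 (mod $2$ crossing numbers).} I orient each edge from $v$ to $w$. Exactly as in the proof of Proposition \ref{prop:degree_area_formula_CAT}, for a.e.\ $y$ in the interior of an edge $e$ the following hold:
\begin{itemize}
\item the fiber $f^{-1}(y)$ is finite;
\item $f$ is differentiable at each preimage with nonzero derivative;
\item each preimage is an isolated point where $f$ crosses $y$ with local degree $\pm1$ (Lemma \ref{lemmagradoloc} in dimension $1$).
\end{itemize}
Now restrict $f$ to the open set $f^{-1}(\mathrm{int}\,e)$, which maps properly onto the open interval $\mathrm{int}\,e$. The sum of the local degrees over such generic $y$ is a locally constant, hence constant, integer $n_e$. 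Reducing mod $2$, we get $n_e\equiv \#f^{-1}(y)$ for a.e.\ $y\in e$. Equivalently, $n_e$ is the coefficient of $e$ in the cellular $1$-chain representing $f_*[\mathbb{S}^1_\ell]\in H_1(G(a,b,c))$.

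\emph{Step 3 (cycle condition).} Since $f_*[\mathbb{S}^1_\ell]$ is represented by a cycle, its boundary vanishes: $(n_a+n_b+n_c)(w-v)=0$, so $n_a+n_b+n_c=0$. Concretely, each time the curve leaves $v$ it enters some edge, and each time it returns it exits one. The three integers $n_a,n_b,n_c$ therefore cannot all be odd. Pick an edge $e$ with $n_e$ even. Then $\#f^{-1}(y)$ is even and at least $1$, hence at least $2$, for a.e.\ $y\in e$. Step 1 now concludes.

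The main obstacle I expect is making Step 2 rigorous for a merely Lipschitz map: identifying $\#f^{-1}(y)$ mod $2$ with a homological crossing number that is constant along each open edge, while avoiding the degenerate behaviour of $f$ near the vertices. I would handle it by working with relative homology $H_1(G,G\setminus\{y\};\Z/2)\cong\Z/2$ for $y$ in an open edge. This is locally constant in $y$ along the edge, so the image of $f_*$ is independent of $y$, and it is computed by the degree formula of Lemma \ref{lemma:degree} at generic $y$.
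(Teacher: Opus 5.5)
Your proof is correct and is essentially the paper's argument: the parity of $\#f^{-1}(y)$ along the edges forces some edge to be covered at least twice, and the area formula then gives $\ell\ge a+b+c+c$. The only difference is bookkeeping. The paper detects the parities through the degrees of $g_a\circ f$ and $g_b\circ f$ onto the loops $C_{ac}$ and $C_{bc}$, which are (up to sign) exactly your $n_a$ and $n_b$. Your cycle relation $n_a+n_b+n_c=0$ makes explicit the remaining case that the paper's sketch leaves implicit: if $n_a$ and $n_b$ are both odd, then $n_c$ is even, so $e_c$ is doubly covered.
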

\begin{proof}[Sketch of the proof]
 There is a continuous map $g_a\colon G(a,b,c) \to \mathbb{S}^1_{a+c} \cong  C_{ac}  $  with $g(e_a)=e_a$ and one preimage for every point  of the edge $e_a$; and,  analogously,  a  map $g_b\colon G(a,b,c) \to  \mathbb{S}^1_{b+c} \cong  C_{bc}$ with $g(e_b)=e_b$ and one preimage  for every   $z \in e_b$.
    Suppose that one of the two maps $g_a\circ f: S^1_\ell \to \mathbb{S}^1_{a+c} $ and  $g_b\circ f \colon : S^1_\ell \to \mathbb{S}^1_{b+c} $ has degree zero, say $g_a\circ f$ (the other case being analogous). Then, every point of $\mathbb{S}^1_{a+c}$ has an even, and positive, number of preimages; in particular, for every $z \in g_a(e_a)$  we get that $\#f^{-1}(z)\ge 2$. Applying  Proposition \ref{prop:area_formula} we obtain that
    $$\ell = \mathcal{H}^1(\mathbb{S}^1_\ell) \ge 2a+b+c \ge a+b+2c.$$
    The other possible case, due to the area formula, is that both $g_a\circ f, g_b\circ f$ have degree one. Once again, this implies that the number of preimages on some edge is at least $2$, which yields as before $\ell \ge a+b+2c$ by the area formula.
\end{proof}

\begin{proof}[Proof of Theorem \ref{theo:intro_2_volume} in dimension $2$]
    Let $(\X,\sfd)$ be a proper, geodesically complete, $\CAT(0)$ space of dimension $2$ which is quasi-isometric to $\R^2$ and such that $\mathcal{H}^1(\partial_T\X) < 4\pi$. Remark \ref{rmk:2-dim-qi_implies_pure_dimensional} implies that $\X$ is purely $2$-dimensional. Propositions \ref{prop:QI_to_R^n_has_biLip_asymptotic_cones} and \ref{prop:CAT(1)_graphs_small_volume} imply that $\partial_T\X$ is isometric to $\mathbb{S}^1_\ell$ for some $2\pi\le\ell<4\pi$. For every $x\in \X$ we consider the $1$-Lipschitz, surjective map  $\partial\log_x\colon \partial_T\X \to \Sigma_x\X$ given by  \eqref{eq:defin_log_from_Tits_boundary}, Section \ref{sec:titsboundary}, from which we deduce that $\mathcal{H}^1(\Sigma_x\X) < 4\pi$. The space $\Sigma_x\X$ is a compact, geodesically complete, $\CAT(1)$ space of $\dim(\Sigma_x\X) = 1$ which is connected, being the continuous image of $\partial_T\X$. Proposition \ref{prop:CAT(1)_graphs_small_volume} implies that $\Sigma_x\X$ is isometric to either $\mathbb{S}^1_{\ell'}$, with $2\pi \le \ell' < 4\pi$ or to $G(a,b,c)$ with $a+b+c < 4\pi$ and $a+b+2c \ge 4\pi$. However, the second case cannot occur, otherwise, considering again the map $\partial\log_x \colon \mathbb{S}^1_\ell \to \Sigma_x\X \cong G(a,b,c)$, we would deduce   by Proposition \ref{prop:maps_on_G(a,b,c)}  that $\ell \ge a+b+2c \ge 4\pi$, a contradiction. Therefore, $\Sigma_x\X = \mathbb{S}^1_{\ell'}$ for some $2\pi\le \ell' < 4\pi$ for every $x\in \X$. In particular, $\X$ is a topological manifold by Proposition \ref{prop:characterization_top_manifolds}. Therefore, $\X$ is homeomorphic to $\R^2$ by Theorem \ref{theo:intro_homology_qi_R^n}.
\end{proof}

	\bibliographystyle{alpha}
	\bibliography{biblio}
	
\end{document}